\newtheorem{theorem}{Theorem}
\newtheorem{proposition}{Proposition}
\newtheorem{corollary}{Corollary}
\newtheorem{lemma}{Lemma}
\newtheorem{definition}{Definition}
\theoremstyle{definition}
\newtheorem{remark}{Remark}
\newcommand{\bdm}{\begin{displaymath}}
\newcommand{\edm}{\end{displaymath}}
\newcommand{\bq}{\begin{equation}}
\newcommand{\eq}{\end{equation}}
\newcommand{\bqn}{\begin{equation*}}
\newcommand{\eqn}{\end{equation*}}
\newcommand{\rn}{\mathbb{R}^n}
\newcommand{\eps}{\varepsilon}
\newcommand{\Reg}{\mathrm{Reg}\,}
\newcommand{\norm}[1]{\left\| #1 \right\|}
\newcommand{\mklm}[1]{\left\{ #1 \right\}}
\newcommand{\eklm}[1]{\left\langle  #1 \right\rangle}
\renewcommand{\d}{\,d}
\newcommand{\N}{{\mathbb N}}
\newcommand{\C}{{\mathbb C}}
\newcommand{\R}{{\mathbb R}}
\newcommand{\K}{{\mathbb K}}
\newcommand{\B}{{\mathcal B}}
\newcommand{\D}{{\mathcal D}}
\newcommand{\E}{{\mathcal E}}
\newcommand{\F}{{\mathcal F}}
\newcommand{\M}{{\mathcal M}}
\newcommand{\X}{{\mathbb X}}
\renewcommand{\epsilon}{\varepsilon}
\renewcommand{\phi}{\varphi}
\renewcommand{\rho}{\varrho}
\newcommand{\1}{{ \bf  1}}
\newcommand{\Cinft}{{\rm C^{\infty}}}
\newcommand{\CT}{{\rm C^{\infty}_c}}
\renewcommand{\L}{{\rm L}}
\newcommand{\Ncal}{{\mathcal N}}
\renewcommand{\S}{{\mathcal S}}
\newcommand{\G}{{\mathcal G}}
\newcommand{\Syms}{{\rm S^{-\infty}}}
\newcommand{\GL}{\mathrm{GL}}
\newcommand{\ltwo}{L^2(\Gamma \backslash G)}
\newcommand{\gmg}{\Gamma \backslash G}
\newcommand{\g}{{\bf \mathfrak g}}
\renewcommand{\k}{{\bf \mathfrak k}}
\newcommand{\p}{{\bf \mathfrak p}}
\newcommand{\U}{{\mathfrak U}}
\newcommand{\Ad}{\mathrm{Ad}\,}
\newcommand{\ad}{\mathrm{ad}\,}
\newcommand{\id}{\mathrm{id}\,}
\renewcommand{\det}{\mathrm{det}\,}
\renewcommand{\Re}{\mathrm{Re}\,}
\newcommand{\vol}{\text{vol}\,}
\newcommand{\Crit}{\mathrm{Crit}}
\DeclareMathOperator{\supp}{supp}
\DeclareMathOperator{\tr}{tr}
\DeclareMathOperator{\gd}{\partial}
\newcommand{\e}[1]{\,{\mathrm e}^{#1}\,}
\newcommand{\dbar}{{\,\raisebox{-.1ex}{\={}}\!\!\!\!d}}
\begin{document}

\author{Octavio Paniagua-Taboada and Pablo Ramacher}
\title{Equivariant  heat asymptotics  on spaces of automorphic forms}
\address{Octavio Paniagua-Taboada and Pablo Ramacher, Fachbereich Mathematik und Informatik, Philipps-Universit\"at Marburg,  
Hans-Meerwein-Str., 35032 Marburg, Germany}
\subjclass{22E46, 53C35,  11F12, 58J40, 58J37, 58J35.}
\keywords{Heat traces, equivariant asymptotics, locally symmetric spaces, pseudodifferential operators, semigroup kernels}
\email{paniagua@mathematik.uni-marburg.de, ramacher@mathematik.uni-marburg.de}
\thanks{ The authors wish to thank Roberto Miatello for  his encouragement, and  many stimulating conversations. This work was financed by the DFG-grant RA 1370/2-1.}
\date{\today}
\begin{abstract}
Let $G$ be a connected, real, semisimple Lie group with finite center, and  $K$ a maximal compact subgroup of $G$. 
In this paper, we derive $K$-equivariant asymptotics  for heat traces with remainder estimates on compact Riemannian manifolds carrying a transitive and isometric $G$-action. In particular, we compute the leading coefficient in the Minakshishundaram-Pleijel expansion of the heat trace for Bochner-Laplace operators on homogeneous vector bundles over compact locally symmetric spaces of arbitrary rank. 
\end{abstract}

\maketitle

\tableofcontents

\section{Introduction}

Let $G$ be a connected, real, semisimple Lie group with finite center,  acting isometrically and transitively on a  compact, $n$-dimensional, real-analytic  Riemannian manifold $M$. Let further $K$ be a maximal compact subgroup of $G$. In this paper, we derive $K$-equivariant asymptotics for traces of heat semigroups associated to strongly elliptic operators on $M$ with remainder estimates. In particular, if $M=\Gamma \backslash G$, where $\Gamma$ is a discrete, torsion-free, uniform subgroup of $G$, we compute the leading coefficient in the Minakshishundaram-Pleijel expansion of the  heat trace of   Bochner-Laplace operators on homogeneous vector  bundles over compact, locally symmetric spaces of arbitrary rank, together with an estimate for the remainder.

The study of the asymptotic behavior  of heat semigroups and their kernels has a long history. One of the pioneering works in this direction was the derivation of an asymptotic expansion for the fundamental solution of the heat equation on a compact manifold  by Minakshisundaram and Pleijel  \cite{minakshisundaram-pleijel49}. The first three coefficients in this expansion were computed by McKean and Singer  \cite{mckean-singer67} in terms of geometric quantities, yielding corresponding expansions of heat traces. This culminated 
in a heat theoretic proof of the index theorem by Atiyah, Bott and Patodi \cite{atiyah-bott-patodi73}. 
 In the case of Riemannian symmetric spaces, an explicit expression for the fundamental solution of the heat equation was given by Gangolli  \cite{gangolli68} using Harish-Chandra's Plancherel theorem. Later, Donelly \cite{donnelly79} generalized the constructions in \cite{minakshisundaram-pleijel49} and \cite{BGM71}  to Riemannian manifolds admitting a properly discontinuous group of isometries with compact quotient. 
Following these developments,   Miatello \cite{miatello80},  and DeGeorge and Wallach \cite{degeorge-wallach79} established  asymptotic expansions for heat traces of Bochner-Laplace operators on homogeneous vector bundles over compact, locally symmetric spaces of rank one. 
Holomorphic semigroups generated by strongly elliptic operators on Lie groups have been studied sytematically by Langlands \cite{langlands60}, and Robinson and ter Elst \cite{robinson}, \cite{ter_elst-robinson}, giving lower and upper bounds for their kernels. For further references, see also \cite{davies} and  \cite{berline-getzler-vergne}.

To illustrate our results, let $(\pi,\L^2(M))$ be the regular representation of $G$ on the Hilbert space of square integrable functions on $M$ with respect to an invariant density, and  $f_t$ the group kernel of a strongly elliptic operator $\Omega$ of order $q$ associated to the representation $\pi$, where  $t>0$.  The corresponding heat operator is then given by $e^{-t\overline{\Omega}}=\pi(f_t)$, and characterized in Theorem \ref{thm:2} as a pseudodifferential operator of order $-\infty$. Due to the compactness of $M$,  this implies that $\pi(f_t)$ is of trace class. Using this characterization, we  consider the decomposition 
\bqn 
\L^2(M)=\bigoplus_{\sigma \in \widehat K} \L^2(M)_\sigma
\eqn
of $\L^2(M)$ into $K$-isotypic components, and  derive asymptotics with remainder estimates for the trace 
$$
\tr (P_\sigma\circ \pi(f_t) \circ P_\sigma)
$$
of the restriction of $\pi(f_t)$ to the isotypic component $\L^2(M)_\sigma=P_\sigma (\L^2(M))$ as $t$ goes to zero, $P_\sigma $ being the corresponding projector, see Theorem \ref{thm:B}.  In order to do so, one has to describe the asymptotic behavior of certain oscillatory integrals, which has been determined before in \cite{ramacher10} while studying   the spectrum of an invariant elliptic operator. The difficulty here resides in the fact that, since the  critical sets of the corresponding phase functions are not smooth, a desingularization procedure is required in order to apply the method of the stationary phase in  a suitable resolution space.  In  case that $f_t$ has  an asymptotic expansion of the form
\bqn 
f_t(g)\sim \frac{1}{ t^{d/q}} e^{-b\big (\frac{d(g,e)^q}t\big )^{1/(q-1)}} \sum_{j=0}^\infty c_j(g) t^j,  \qquad b>0, 
\eqn
near the identity $e\in G$ with analytic  coefficients $c_j(g)$,
where $d=\dim G$, and  $d(g,e)$ denotes the distance of $g \in G$ from the identity with respect to the canonical left-invariant metric on $G$,  we show in Corollary \ref{cor:B} that 
\begin{align*}
\tr  (P_\sigma\circ \pi(f_t) \circ P_\sigma)&=\frac{d_{\sigma\otimes \sigma} [(\pi_\sigma\otimes \pi_\sigma)_{|\mathbb{H}}:\1] }{(2\pi)^{n-\kappa}t^{(n-\kappa)/q}} c_0(e) \, \widetilde {\vol} (\Xi/\K) + O(t^{-(n-\kappa-1)/q} (\log t)^{\Lambda -1}),
\end{align*}
where  $(\pi_\sigma, V_\sigma) \in \sigma$, and $\widetilde {\vol} (\Xi/\K)$ is given by local integrals over the zero level set $\Xi=\mathbb{J}^{-1}(0)$ of the momentum map $\mathbb{J}:T^\ast M \rightarrow (\k\oplus \k)^\ast$ of the underlying action of $\K=K\times K$  on $M$. In fact, $\widetilde {\vol} (\Xi/\K)$ represents a Gaussian volume of the symplectic quotient $\Xi/\K$. 
Further,  $\kappa$ denotes the dimension of a $K$-orbit of principal type, and $\mathbb{H}\subset \K$ a principal isotropy group, while $\Lambda$ is the maximal number of elements of a totally ordered subset of the set of $\K$-isotropy types. 

As our main application, we consider the case  $M=\Gamma \backslash G$, where  $\Gamma\subset G$ is a discret, co-compact subgroup. The previous results, combined  with Selberg's trace formula, then yield an asymptotic description of $L_\sigma f_t$ at the identity, where $L_\sigma$ denotes the projector onto the isotypic component $\L^2(G)_\sigma$ of the left-regular representation $(L,\L^2(G))$ of $G$, see Proposition \ref{prop:2}. Finally,  for torsion-free $\Gamma$, we are able to compute the first coefficient in the Minakshisundaram-Pleijel expansion, together with an estimate for the remainder,  of vector valued heat kernels on the compact locally symmetric space $\Gamma\backslash G/K$, generalizing part of the work in \cite{miatello80} and \cite{degeorge-wallach79} to   arbitrary rank. More precisely, let  $\Delta_\sigma$ be  the Bochner-Laplace operator on the homogeneous vector bundle $E_\sigma =\Gamma \backslash (G\times V_\sigma)/K\rightarrow \gmg/K$. Denote  by $\lambda_\sigma$ the Casimir eigenvalue of $K$ corresponding to $\sigma \in \widehat K$. Then, by Theorem \ref{thm:5},  
\begin{gather*}
\tr e^{-t \Delta_\sigma} =\frac{ e^{t\lambda_\sigma}  \int_{\mathbb{H}} \tr \pi_\sigma(kk_1^{-1}) \d k_1 \d k}{(2\pi)^{\dim G/K}t^{\frac{\dim G/K}{2}}}\widetilde {\vol} (\Xi/\K)+ O(e^{t \lambda_\sigma}t^{-(\dim G/K-1)/2} (\log t)^{\Lambda -1}),
\end{gather*}
where, again,   $\widetilde {\vol} (\Xi/\K)$ is given by a Gaussian volume of the symplectic quotient $\Xi/\K$. 

 This paper is organized as follows. The microlocal structure of general convolution operators with rapidly decaying group kernels on paracompact, smooth manifolds is described in  Section 2. In Section 3, the Langlands kernel of a semigroup generated by a strongly elliptic operator on $M$ is considered, and its equivariant heat trace is expressed in terms of oscillatory integrals. Since the occuring phase functions do have singular critical sets, the stationary phase principle  cannot immediately be applied to describe the asymptotic behavior of those integrals. Instead, we rely on  the results in   \cite{ramacher10}, where  resolution of singularities was used to partially resolve the singularities of the considered critical sets. This yields  short-time asymptotics with remainder estimates for equivariant heat traces in Section 4. Finally, in Section 5, we consider the particular case  $M=\Gamma \backslash G$, where $\Gamma$ denotes a uniform, torsion-free lattice in $G$, and apply our results to heat traces of  Bochner-Laplace operators on compact locally symmetric spaces of arbitrary rank.

\section{Convolution operators}

Let $G$ be a connected, real, semisimple Lie group with finite center, and Lie algebra $\g$.  Denote by  $\langle X,Y\rangle = \tr \, (\ad X\circ \ad Y)$ the Cartan-Killing form   on $\g$, and by $\theta$   a Cartan involution  of $\g$. Let 
\bq
\label{eq:cartan}
\g = \k\oplus\p
\eq
be  the Cartan decomposition of $\g$ into the eigenspaces of  $\theta$, corresponding to the eigenvalues  $+1$ and $-1$ , respectively. Put $\langle X,Y\rangle _\theta:=-\langle X,\theta Y\rangle $. Then $\langle \cdot,\cdot \rangle _\theta$ defines a left-invariant metric on $G$.  With respect to this metric, we define  $d(g,h)$ as  the geodesic distance between two points $g,h \in G$, and set $|g|=d(g,e)$, where $e$ is the identity element of $G$. Note that $d(g_1g,g_1h) = d(g,h)$ for all $g,g_1,h \in G$.  In contrast to the Killing form,  $\langle \cdot,\cdot \rangle _\theta$ is no longer $\Ad(G)$-invariant, but  still $\Ad(K)$-invariant, so that $d(gk,hk)=d(g,h)$ for all $g,h \in G$, and $k \in K$.   Indeed, one has the following 
\begin{proposition}\label{prop:Ad(k)inv}
The modified Killing form $\left<\cdot  ,\cdot \right>_\theta$ is $\Ad(K)$-invariant, which implies that the corresponding Riemannian distance $d$ on $G$ is right $K$-invariant. In particular, $|g|=|kgk^{-1}|$ for all $g \in G$ and $k \in K$. 
\end{proposition}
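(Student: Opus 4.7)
The plan is to prove the three assertions in the order stated: first the $\Ad(K)$-invariance of $\langle\cdot,\cdot\rangle_\theta$, then the right $K$-invariance of $d$, and finally the conjugation formula $|g|=|kgk^{-1}|$.

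For the first step, the key observation is that $\theta\colon\g\to\g$ is the differential at $e$ of the global Cartan involution $\Theta\colon G\to G$ whose fixed-point set is exactly $K$. Since $\Theta(k)=k$ for every $k\in K$, differentiating the identity $\Theta(kgk^{-1})=k\Theta(g)k^{-1}$ in $g$ at the identity yields the commutation relation $\theta\circ\Ad(k)=\Ad(k)\circ\theta$. Combined with the $\Ad(G)$-invariance of the Killing form, a one-line computation then shows
\[
\langle\Ad(k)X,\Ad(k)Y\rangle_\theta=-\langle\Ad(k)X,\Ad(k)\theta Y\rangle=-\langle X,\theta Y\rangle=\langle X,Y\rangle_\theta
\]
for all $X,Y\in\g$ and $k\in K$.

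For the second step, I plan to check that each $R_k$, $k\in K$, is an isometry of the left-invariant Riemannian structure on $G$. Transporting tangent vectors at $g_0$ and at $g_0k$ back to $T_eG=\g$ via left translation, and using that $R_k$ commutes with any left translation, the induced map on $\g$ comes out to be $\Ad(k^{-1})$, and the isometry property then follows directly from Step~1. The final claim then drops out from the chain
\[
|kgk^{-1}|=d(kgk^{-1},e)=d(gk^{-1},k^{-1})=d(g,e)=|g|,
\]
where the first equality uses left-invariance and the second uses right $K$-invariance.

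The only genuinely geometric input is the commutation $\theta\circ\Ad(k)=\Ad(k)\circ\theta$ established in Step~1; everything else reduces to standard bookkeeping about invariant metrics on a Lie group, so I do not anticipate a substantial obstacle.
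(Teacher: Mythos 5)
Your proof is correct, but it takes a genuinely different route from the paper's in both halves. For the commutation $\theta\circ\Ad(k)=\Ad(k)\circ\theta$, you differentiate the identity $\Theta(kgk^{-1})=k\Theta(g)k^{-1}$ for the global Cartan involution $\Theta$ with $G^\Theta=K$; the paper instead stays at the Lie-algebra level and derives $\Ad(K)\k\subset\k$, $\Ad(K)\p\subset\p$ from the bracket relations $[\k,\k]\subset\k$, $[\k,\p]\subset\p$, $[\p,\p]\subset\k$ together with $\Ad(e^X)=e^{\ad X}$. Your route is shorter and conceptually clean, but invokes the existence and fixed-point characterization of the global involution (a theorem that does hold for connected semisimple $G$ with finite center, so there is no gap, only a heavier import); the paper's route is more elementary and self-contained. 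For the metric step, you verify directly that each $R_k$ is an isometry of the left-invariant metric — the left-trivialized differential of $R_k$ is $\Ad(k^{-1})$, and $\Ad(K)$-invariance does the rest — which yields right $K$-invariance of $d$ immediately, and then $|kgk^{-1}|=d(gk^{-1},k^{-1})=d(g,e)$ follows by chaining left- and right-invariance. The paper instead computes $\frac{d}{dt}\,kc(t)k^{-1}$ explicitly, shows that conjugation $c\mapsto kck^{-1}$ preserves arc length, deduces $d(g,h)=d(kgk^{-1},khk^{-1})$ via a shortest-geodesic argument, and only then extracts right-invariance using left-invariance. Your isometry-of-$R_k$ argument is more direct and avoids the minimizing-geodesic detour; both arguments ultimately rest on the same $\Ad(K)$-invariance of $\langle\cdot,\cdot\rangle_\theta$.
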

\begin{proof}
This seems to be a well-known fact, but  for  lack of  references, we include a proof here. 
 Thus, let us  first note that for $k \in K$, the morphisms $\Ad (k)$ and $\theta$ commute. Indeed,  the inclusions $[\k,\k]\subset \k$, $[\p,\p]\subset \k$, and $[\k,\p] \subset \p$, together with the relation $\Ad(e^X)= e^{\ad X}$, $X \in \g$, imply that $\Ad(K)  \, \k \subset \k$, $\Ad(K) \, \p \subset \p$. Hence, $\Ad(k) \theta X= \theta \Ad(k)  X$ for all $X \in \g$. 
But then
 $$\left<\Ad(k)X , \Ad(k)Y\right>_\theta = -\left<\Ad(k)X , \theta \Ad(k)Y\right> = -\left<\Ad(k)X ,  \Ad(k)\theta Y\right>= - \left<X , \theta Y\right>=  \left<X , Y\right>_\theta
 $$
 for all $X,Y \in \g$, $k \in K$, 
 showing  the $\Ad(K)$-invariance of $\left<\cdot ,\cdot \right>_\theta$. 
 
Next, we  show that the Riemannian distance $d$ is right $K$-invariant. For this purpose, recall that for a curve $c : [a,b] \to {\bf{X}}$ on a Riemannian manifold ${\bf{X}}$ with metric $\nu$,  the length of $c$ is given by
 \[
 L(c)= \int_a^b \sqrt{ \nu_{c(s)} (c'(s), c'(s)) }ds.
 \]
Let now  $c:[a,b]\rightarrow G$ be a curve in $G$ joining two points $g,h\in G$. 
We then assert that 
 \begin{equation}\label{eq0ad(k)}
 \frac{d}{dt} kc(t)k^{-1}|_{t=t_0} =  \left( dL_{kc(t_0)k^{-1}} \right)_e \Ad(k)\Big ( (dL_{c(t_0)^{-1}})_{c(t_0)} c'(t_0) \Big ), \qquad k \in G,
 \end{equation}
 where $L_g:G \to G$ corresponds to  left-translation by $g\in G$, and $(dL_g)_h: T_h G \to T_{gh} G$ is its differential at  $h \in G$. Indeed, if  $i_k: G \to G$ denotes the interior automorphism $h \mapsto khk^{-1}$, its differential at the identity $e$ is by definition $\Ad(k)=(d i_k)_e :  \g \to \g$. Furthermore, since $i_k = L_k \circ R_{k^{-1}} = R_{k^{-1}} \circ L_k$, we have the  identities
 \begin{equation*}\label{eqAd(k)}
 \Ad(k)= (dL_k)_{k^{-1}} \circ (dR_{k^{-1}})_e =  (dR_{k^{-1}})_k \circ (dL_k)_e.
 \end{equation*}
Similarly,  $L_{kc(t_0)k^{-1}}= L_k \circ L_{c(t_0)} \circ L_{k^{-1}}$ implies $(dL_{kc(t_0)k^{-1}})_e = (dL_k)_{c(t_0)k^{-1}} \circ (dL_{c(t_0})_{k^{-1}} \circ (dL_{k^{-1}})_e$. The left hand side of \eqref{eq0ad(k)} now reads
\[
\frac{d}{dt} kc(t)k^{-1}|_{t=t_0} = (dL_k)_{c(t_0)k^{-1}} \circ (dR_{k^{-1}})_{c(t_0)}(c'(t_0)),
\]
while the right hand side equals
\[
(dL_k)_{c(t_0)k^{-1}}\circ (dL_{c(t_0)})_{k^{-1}}  \circ (dL_{k^{-1}})_e \circ (dL_k)_{k^{-1}} \circ (dR_{k^{-1}})_{e} \circ (dL_{c(t_0)^{-1}})_{c(t_0)}(c'(t_0)) 
\]
\[
= (dL_k)_{c(t_0)k^{-1}}\circ (dR_{k^{-1}})_{c(t_0)} \circ (dL_{c(t_0)})_{e}  \circ (dL_{c(t_0)^{-1}})_{c(t_0)}(c'(t_0))
\]
\[
= (dL_k)_{c(t_0)k^{-1}}\circ (dR_{k^{-1}})_{c(t_0)}(c'(t_0)),
\]
prooving  \eqref{eq0ad(k)}. Write  $\eklm{X,X}_\theta=\norm{X}^2_\theta$. The $\Ad(K)$-invariance of $\eklm{\cdot,\cdot}_\theta$ then implies  
\begin{align*}
L(kck^{-1})&=\int_a^b \norm{(dL_{kc(s)^{-1}k^{-1}})_{kc(s)k^{-1}}\Big (\frac{d}{dt} kc(t)k^{-1}|_{t=s}\Big )}_\theta \d s \\ 
&=\int_a^b \norm{ \Ad(k) \left[ (dL_{c(t_0)^{-1}})_{c(t_0)} c'(t_0) \right]}_\theta \d s=\int_a^b \norm{  (dL_{c(t_0)^{-1}})_{c(t_0)} c'(t_0) }_\theta \d s=L(c)
\end{align*}
for arbitrary $k \in K$. 
Assume now that $c$ is a shortest geodesic. The last equality then shows that  $kck^{-1}$ is a shortest geodesic, too. Otherwise there would exist a geodesic $\tilde c$ joining $kgk^{-1}$ and $khk^{-1}$ with $L(\tilde c) < L(kck^{-1})$. But then $L(k^{-1}\tilde c k)<L(c)$, a contradiction. Therefore
\bqn 
d(g,h)=L(c) = L(kck^{-1})=d(kgk^{-1}, khk^{-1}) = d(gk^{-1}, hk^{-1})
\eqn
for all $g,h \in G$, $k \in K$, and the proposition follows.
\end{proof}

Let us consider next  a paracompact $\Cinft$-manifold $M$ of dimension $n$, and assume that $G$ acts on  $M$ in a smooth and transitive way. Let $\mathrm{C}(M)$ be the Banach space of continuous, bounded, complex valued functions on $M$, equipped with the supremum norm, and let  $(\pi,\mathrm{C}(M))$ be the corresponding continuous  regular representation of $G$ given by
\bqn 
\pi(g) \phi({p}) =\phi(g \cdot {p}), \qquad \phi \in \mathrm{C}(M), \quad g \in G, \quad p \in M.
\eqn
  The representation of the universal enveloping algebra $\U$ of the complexification $\g_\C$ of $\g$ on the space of differentiable vectors $\mathrm{C}(M)_\infty$ will be denoted by $d\pi$. We shall also consider the regular representation  of $G$ on $\Cinft(M)$ which, equipped with the topology of uniform convergence on compacta, becomes a Fr\'{e}chet space. This representation will be denoted by $\pi$ as well. Let $(L,\Cinft (G))$ and $(R,\Cinft (G))$ be the left, respectively right regular representation of $G$.    A function $f$ on $G$ is said to be of \emph{at most of exponential growth}, if there exists a $\kappa>0$ such that $|f(g)| \leq C e^{\kappa|g|}$ for some constant $C>0$, and all $g \in G$. Let $\d g$ be  a Haar measure on $G$. We then make the following
\begin{definition}
\label{def:1}
The space of rapidly decreasing functions on $G$, denoted by $\S(G)$, is given by all functions $f \in \Cinft(G)$ satisfying the following conditions:
\begin{itemize}
\item[i)] For every $\kappa \geq 0$, and $X \in \U$, there exists a constant $C > 0$ such that 
	$$|dL(X)f(g)| \leq C e^{-\kappa |g|} ;$$ 
\item[ii)] for every $\kappa \geq 0$, and $X \in \U$, one has $dL(X)f \in \L^1(G,e^{\kappa|g|}d_G)$.
\end{itemize}
\end{definition}
The space $\S(G)$ was first introduced in  \cite{ramacher06}, and motivated by the study of strongly elliptic operators, and the semigroups generated by them, see Section 3. Let us  now associate to every $f\in \S(G)$ and $\phi \in \mathrm{C}(M)$ the vector-valued integral $\int _{G} f(g)  
\pi(g) \phi \d_{G}(g)\in \mathrm{C}(M)$, yielding a  continuous linear operator
\bq
\label{eq:1}
\pi(f)=\int_G f(g) \pi(g) \d g
\eq
on $\mathrm{C}(M )$. Its restriction to $\CT(M)$ induces a continuous linear operator            
\begin{equation*}
\pi(f):\CT(M) \longrightarrow \mathrm{C}(M) \subset \D'(M),
\end{equation*}
with Schwartz kernel given by the distribution section  $\mathcal{K}_f \in  \D'(M \times M, {{\bf 1}} \boxtimes \Omega_{M})$, where $\Omega_M$ denotes the density bundle of $M$.   In what follows,  we shall show  that $\pi(f)$ is an operator with smooth kernel. As we shall see, the smoothness of the operators $\pi(f)$  is a direct consequence of the fact that $G$ acts transitively on $M$.

Thus, let $\mklm{(\widetilde W_\iota', \phi_{\iota})}_{\iota \in I}$ be a locally finite  atlas of $M$. By \cite{kobayashi-nomizuI}, page 273, there exists a locally finite refinement $\mklm{\widetilde W_\iota}_{\iota \in I}$ with the same index set such that $\overline{\widetilde W_\iota} \subset \widetilde W_\iota'$ for every $\iota \in I$. Assume that the 
$\overline{\widetilde W_\iota'}$ are compact, and let $\mklm{\alpha_\iota}_{\iota \in I}$ be a partition of unity subordinated to the atlas $\mklm{(\widetilde W_\iota, \phi_{\iota})}_{\iota \in I}$, meaning that
\begin{enumerate}
\item[(a)] the $\alpha_\iota$ are smooth functions, and $0 \leq \alpha_\iota \leq 1$;
\item[(b)] $\supp \alpha_\iota \subset \widetilde W_\iota$;
\item[(c)] $\sum_{\iota \in I} \alpha_\iota =1$.
\end{enumerate}
Let further $\mklm{\alpha'_\iota}_{\iota\in I}$ be another set of functions satisfying  condition (a), and in addition
\begin{enumerate}
\item[(b')] $\supp \alpha'_\iota \subset \widetilde W_\iota'$;
\item[(c')] $\alpha'_{\iota|\widetilde W_\iota}\equiv 1$. 
\end{enumerate}
  Consider now the localization  of $\pi(f)$ with respect to the latter atlas
 \bqn 
A_{f}^{\iota} u=[\pi(f)_{|\widetilde W_{\iota}}(u\circ
\phi_{\iota})]\circ \phi_{\iota}^{-1}, \qquad
u\in\CT(W_{{\iota}}), \,
W_{\iota}=\phi_{\iota}(\widetilde W_\iota)\subset \R^{n},
 \eqn 
corresponding to  the diagram
 \begin{displaymath}
\begin{CD} 
\CT(\widetilde W_{\iota})       @>{\pi(f)_{|\widetilde W_\iota}}>>   \Cinft(\widetilde W_\iota)               \\
@A {\phi_{\iota}^\ast}AA @AA {\phi_{\iota}^\ast}A\\
 \CT( W_{\iota})  @> {A_f^\iota}>>  \Cinft( W_\iota).       
\end{CD}
\end{displaymath}
Let $ {p} \in \widetilde W_{\iota}$.   Writing $\phi_{\iota}^{g}= \phi_{\iota}\circ g\circ \phi_{\iota}^{-1}$, and $x=\phi_\iota({p})= (x_{1},\dots,x_{n})\in {W}_\iota$ we obtain
\begin{equation*}
A_{f}^{\iota} u(x)=\int_{G}  f(g)[(u\circ \phi_{\iota}) \alpha'_\iota](g \cdot \phi_\iota^{-1}(x)) \, \d g=\int_{G}  f(g) c_{\iota}(x,g)(u\circ \phi_{\iota}^{g})(x)\d g,
\end{equation*}
where we put $c_{\iota}(x,g)=\alpha'_\iota(g \cdot \phi_\iota^{-1}(x))$. Next, define the functions
\bq
\label{eq:sym}
a_{f}^{\iota}(x,\xi)=e^{-ix\cdot \xi} \int_{G}e^{i\phi_{\iota}^{g}(x)\cdot\xi} c_{\iota}(x,g)f(g) \d g.
\eq
Since $f$ is rapidly falling, differentiation under the integral yields $a_{f}^{\iota}(x,\xi) \in\Cinft(W_{{\iota}} \times \R^{n})$. We can now state 

\begin{theorem}[Structure theorem]
\label{thm:2}
Let $M$ be a  paracompact $\Cinft$-manifold of dimension $n$,  and $G$ a connected, real, semisimple Lie group with finite center acting on $M$ in a smooth and transitive way. Let further  $f\in \S({G})$ be a rapidly decaying function on $G$.   Then the operator $\pi(f)$ is a pseudodifferential operator of class  $\L^{-\infty}(M)$, that is, it is locally of the form \footnote{Here and in what follows we  use the convention that, if not specified otherwise, integration is to be performed 
over whole Euclidean space.}
\begin{gather}
\label{eq:2}
  A^\iota_fu(x)= \int e ^{i x \cdot \xi} a_f^\iota(x,\xi)\hat u(\xi) \, \dbar\xi, \qquad u \in \CT(W_\iota),
\end{gather}
where the symbol $a_f^\iota(x,\xi)\in \mathrm{S}^{-\infty}(W_\iota, \rn)$ is given by \eqref{eq:sym}, and $\dbar \xi= (2\pi)^{-n} \d \xi$.  In particular, the kernel of the operator $A^\iota_f$ is  given by the oscillatory integral
\begin{equation}
\label{eq:3}
  K_{A_f^\iota} (x,y)=\int e^{i(x-y) \cdot \xi} a^\iota_f(x,\xi) \, \dbar \xi \in \Cinft(W_\iota \times W_\iota). 
\end{equation}
\end{theorem}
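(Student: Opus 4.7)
The plan is to establish the symbol estimate $a_f^\iota\in\Sym^{-\infty}(W_\iota,\R^n)$ first, since this is the heart of the matter, and then deduce \eqref{eq:2} by Fourier inversion and Fubini, and \eqref{eq:3} as the standard oscillatory-integral kernel formula. The symbol estimate exploits the transitivity of the $G$-action, which makes the orbit map $\Phi_p:G\to M$, $g\mapsto g\cdot p$, a smooth submersion realizing $G$ as a principal $G_p$-bundle over $M$. The key observation is that in \eqref{eq:sym} both the cut-off $c_\iota(x,g)=\alpha_\iota'(g\cdot\phi_\iota^{-1}(x))$ and the phase $(\phi_\iota^g(x)-x)\cdot\xi$ depend on $g$ only through $g\cdot\phi_\iota^{-1}(x)\in M$, so the $G$-integral defining $a_f^\iota$ is morally a pushforward along $\Phi_{p_0}$, with $p_0=\phi_\iota^{-1}(x)$, followed by Euclidean Fourier transform.

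Concretely, I would fix a reference point $p^*\in M$ and, after refining the atlas if necessary (using compactness of $\overline{\widetilde W_\iota'}$), a smooth section $g_0:\widetilde W_\iota'\to G$ of $\Phi_{p^*}$, i.e.\ $g_0(q)\cdot p^*=q$. For $x\in W_\iota$ with $p_0=\phi_\iota^{-1}(x)$, the map $(q,h^*)\mapsto g_0(q)\,h^*\,g_0(p_0)^{-1}$ is a smooth diffeomorphism from $\widetilde W_\iota'\times G_{p^*}$ onto $\Phi_{p_0}^{-1}(\widetilde W_\iota')$ that sends $(q,h^*)$ to an element $g\in G$ satisfying $g\cdot p_0=q$. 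In these coordinates the Haar measure decomposes as $\jmath(x,q,h^*)\,dq\,dh^*$ for a smooth density $\jmath$, and, writing $y=\phi_\iota(q)$, \eqref{eq:sym} becomes
\bqn
a_f^\iota(x,\xi)=\int_{W_\iota'} e^{i(y-x)\cdot\xi}\,H(x,y)\,dy,
\eqn
with
\bqn
H(x,y)=\alpha_\iota'(\phi_\iota^{-1}(y))\int_{G_{p^*}} f\big(g_0(\phi_\iota^{-1}(y))\,h^*\,g_0(\phi_\iota^{-1}(x))^{-1}\big)\,\widetilde\jmath(x,y,h^*)\,dh^*,
\eqn
where $\widetilde\jmath$ absorbs the Jacobian of $y\mapsto q$. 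Definition \ref{def:1} ensures that this fibre integral converges absolutely and commutes with every $\partial_x^\alpha$, so $H\in\Cinft(W_\iota\times W_\iota)$ with compact support in $y$. The displayed expression for $a_f^\iota$ is therefore, up to the factor $e^{-ix\cdot\xi}$, the Fourier transform in $y$ of a smooth, compactly $y$-supported function, hence decays faster than any polynomial in $\xi$ uniformly in $x$ on compacta; differentiation under the integral yields the full family of bounds $|\partial_x^\alpha\partial_\xi^\beta a_f^\iota(x,\xi)|\leq C_{\alpha,\beta,N}(1+|\xi|)^{-N}$.

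Given $a_f^\iota\in\Sym^{-\infty}$, formula \eqref{eq:2} follows by inserting the Fourier inversion $u(y)=\int e^{iy\cdot\xi}\hat u(\xi)\,\dbar\xi$ into the explicit expression for $A_f^\iota u(x)$ and interchanging the $\xi$- and $g$-integrations; absolute convergence, hence Fubini, is justified by $\hat u\in\S(\R^n)$, $f\in\L^1(G)$, and the boundedness of $c_\iota$. Formula \eqref{eq:3} is then the standard oscillatory-integral representation of the Schwartz kernel, which here is an absolutely convergent integral defining a $\Cinft$-function because $a_f^\iota$ is rapidly decaying in $\xi$.

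The principal technical obstacle is the joint smoothness of $H(x,y)$ together with uniformly bounded derivatives: the trivialization of $\Phi_{p_0}$ a priori depends on $p_0=\phi_\iota^{-1}(x)$, and even the stabilizer $G_{p_0}$ itself varies with $x$. The device of fixing the reference stabilizer $G_{p^*}$ and conjugating by $g_0(\phi_\iota^{-1}(x))^{-1}$ transfers all $x$-dependence into the smoothly varying argument of $f$, while the fibre integration proceeds over the single fixed closed subgroup $G_{p^*}$. Condition (ii) of Definition \ref{def:1}, which controls $e^{\kappa|g|}$-weighted integrals of every $dL(X)f$, then delivers the required uniform bounds on $H$ and all its derivatives on compact subsets of $W_\iota\times W_\iota$, closing the argument.
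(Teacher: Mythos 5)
Your proof takes a genuinely different route from the paper's. The paper keeps the $g$-integral over all of $G$ intact: it uses transitivity to observe that the derivative matrix $\mathcal{M}(x',g)=\left(dL(X_{j_k})x_{i,\phi_\iota^{-1}(x')}(g)\right)$ is invertible, expresses $(1+|\xi|^2)^N\psi^\iota_{\xi,x'}(g)$ as a combination of $dL(X^\alpha)\psi^\iota_{\xi,x'}(g)$ with coefficients $b^N_\alpha(x',g)$ of at most exponential growth (equation \eqref{26}), and then integrates by parts on $G$ via \eqref{eq:9}, invoking Definition \ref{def:1} to close. Your route disintegrates the Haar measure along the orbit map $\Phi_{p_0}$, rewrites $a_f^\iota(x,\xi)$ as a Fourier transform in $y$ of a putatively smooth, compactly $y$-supported $H(x,y)$, and obtains the rapid decay in $\xi$ by a Paley--Wiener mechanism. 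Both approaches exploit transitivity and both are sound in principle; the paper's version never has to say anything about the stabilizer, whereas yours makes the principal-bundle geometry explicit and is conceptually cleaner if the fibre integral can be controlled.

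There is, however, a gap at exactly the step you attribute to Definition \ref{def:1}. You assert that condition (ii) delivers uniform bounds on $H$ and all of its derivatives, but condition (ii) controls $e^{\kappa|g|}$-weighted $L^1$-norms of $dL(X)f$ over the whole group $G$, whereas your fibre integral runs over the stabilizer $G_{p^*}$, a closed subgroup of positive codimension and hence a Haar-null set in $G$. A Fubini disintegration of condition (ii) yields, at best, almost-every-$q$ finiteness, which is not the everywhere-finite, uniformly bounded, differentiable-under-the-integral control your construction of $H$ actually requires. The convergence must instead come from condition (i), the pointwise superexponential bound $|dL(X)f(g)|\le C_{X,\kappa}e^{-\kappa|g|}$ valid for every $\kappa$, together with the elementary estimate $|g_0(q)h^*g_0(p_0)^{-1}|\ge|h^*|_G-C$ for $q,p_0$ in compacta, and --- crucially, and missing from your sketch --- with an explicit volume-growth input: the $G_{p^*}$-Haar measure of $\{h^*\in G_{p^*}:|h^*|_G\le R\}$ grows at most exponentially in $R$. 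This is not free, because the ambient distance $|\cdot|_G$ can be exponentially smaller than the intrinsic distance on a closed subgroup (a unipotent one-parameter subgroup of $\SLR$ has $|n_x|_G\sim 2\log|x|$), so these ambient balls inside $G_{p^*}$ can be exponentially large; only the fact that condition (i) provides decay faster than every exponential rescues the integral, and the argument has to say so. This is precisely the convergence subtlety the paper confronts later in its main application $M=\Gamma\backslash G$: there the stabilizer is the lattice $\Gamma$, the fibre ``integral'' becomes $\sum_{\gamma\in\Gamma}f(h^{-1}\gamma g)$, and the paper handles it in Lemma \ref{prop01} via a Poincar\'e series and its critical exponent. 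Replacing your appeal to condition (ii) by condition (i) plus an explicit volume-growth bound for $G_{p^*}$ closes the gap.
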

\begin{proof} 
Our considerations will essentially follow the proof of Theorem 4 in  \cite{ramacher06}, or Theorem 2 in \cite{parthasarathy-ramacher11}. For  a review on pseudodifferential operators, the reader is referred to \cite{shubin}.  Fix a chart $(\widetilde W_\iota, \phi_{\iota})$, and let   ${p} \in \widetilde W_\iota$, $x =(x_1,\dots,x_n)= \phi_\iota({p})\in \rn$.  In what follows we shall show that  $a_{f}^{\iota}(x,\xi)$ belongs to the symbol class $  \S^{-\infty}(W_\iota \times \rn)$. For later purposes, we  shall actually consider  the slightly more general amplitudes 
\begin{align}
\label{eq:symk}\begin{split}
a_{f}^{\iota\tilde \iota}(x,\xi;k_1,k_2)&=e^{-i\phi_{\tilde \iota}^{k_1k_2}(x)\cdot \xi} 
\alpha'_{\tilde \iota}(k_1 k_2 \cdot \phi_\iota^{-1}(x))  \int_{G}e^{i\phi_{\iota}^{k_1gk_2}(x)\cdot\xi} c_{\iota}(x,k_1gk_2)f(g) \d g \\&=e^{-i\phi_{\tilde \iota}^{k_1k_2}(x)\cdot \xi}  \alpha'_{\tilde \iota}(k_1 k_2 \cdot \phi_\iota^{-1}(x)) \int_{G}e^{i\phi_{\iota}^{g}(x)\cdot\xi} c_{\iota}(x,g)(L(k_1)R(k^{-1}_2)f)(g) \d g,
\end{split}
\end{align}
where   $k_1,k_2 \in G$. Here we took into account the unimodularity of $G$. In particular, $a_{f}^{\iota}(x,\xi)=a_{f}^{\iota \iota}(x,\xi;e,e)$.
Denote by $V_{\iota,{p}}$  the set of all $g\in G$ such that $g \cdot {p} \in \widetilde{W}_\iota$. Assume that  $g \in V_{\iota,{p}}$, and  write
\bqn 
\psi^\iota_{\xi,x} (g)=e^{i\phi_{\iota}^{g}(x)\cdot\xi}.
\eqn
For $X\in\g$ one computes that
\begin{align*}
dL(X)\psi^\iota_{\xi,x}(g)&=\frac{d}{ds} {e^{i\phi_{\iota}^{\e{-sX}g}(x)\cdot\xi} }_{|s=0}=i\psi^\iota_{\xi,x}(g)\sum_{{i}=1}^{n}\xi_{i}dL(X)x_{i,{p}}(g),
\end{align*}
where we put $x_{i,{p}}(g)= x_i(g \cdot {p})$. Let $\mklm{X_1, \dots, X_d}$ be a basis of $\g$. Since $G$ acts locally transitively on $\widetilde W_\iota$, the $n \times d$ matrix 
\bqn
\left ( dL(X_j)x_{i,{p}}(g) \right )_{i,j}
\eqn
has maximal rank. As a consequence, there exists a neighborhood $\widetilde U_p$ of ${p}$, and indices $j_1, \dots, j_n$ such that 
\bqn 
\det \left ( dL(X_{j_k})x_{i,{p}'}(g) \right )_{i,k} \not = 0 \qquad \forall {p} ' \in \widetilde U_p.
\eqn
Hence,
\bq
\label{eq:23}
\begin{pmatrix} dL(X_{j_1})\psi^\iota_{\xi,x'}(g)\\ \vdots\\ dL(X_{j_n})\psi^\iota_{\xi,x'}(g)\end{pmatrix} =i\psi^\iota_{\xi,x'}(g)\mathcal{M}(x',g)\xi,
\eq
where $\mathcal{M}(x',g)=\left ( dL(X_{j_k})x_{i,\phi_\iota^{-1}(x')}(g) \right )_{i,k}\in \GL(n,\R)$ is an invertible matrix for all $x' \in \phi_\iota(\widetilde U_p)$. Consider now the extension of $\mathcal{M}(x',g)$  as an endomorphism in $\C^1[\R^{n}_\xi]$ to the symmetric algebra ${\rm{S}}(\C^1[\R^{n}_\xi])\simeq \C[\R^{n}_\xi]$.  Since  $\mathcal{M}(x',g)$ is invertible, its extension to $ {\rm{S}}^N(\C^1[\R^{n}_\xi])$ is also an automorphism for any $N\in\N$. Regarding  the polynomials $\xi_1,\dots,\xi_{n}$ as a basis in $\C^1[\R^{n}_\xi]$, let us denote the image of the basis vector $\xi_j$ under the endomorphism $\mathcal{M}(x',g)$ by $\mathcal{M} \xi_j$, so that  by \eqref{eq:23}
\begin{align*}
\mathcal{M} \xi_k&= -i  \psi^\iota_{-\xi,x'}(g) dL(X_{j_k})\psi^\iota_{\xi,x'}(g),  \qquad  1\leq k \leq n.
\end{align*}
In this way, each polynomial  $\xi_{j_1} \otimes \dots \otimes \xi_{j_N}\equiv \xi_{j_1} \dots \xi_{j_N}$ can  be written as a linear combination
 \begin{equation}
\label{24}
   \xi^\alpha =\sum _\beta \Lambda^\alpha_\beta (x',g) \M \xi_{\beta_1} \cdots \M \xi_{\beta_{|\alpha|}},
 \end{equation}
where the $\Lambda^\alpha_\beta(x',g)$ are smooth functions given in terms of the matrix coefficients of $\mathcal{M}(x',g)$.  We now have for arbitrary indices $\beta_1,\dots, \beta_r$ and all $x' \in \phi_\iota(\widetilde U_p)$
  \begin{align}
\label{25}
\begin{split}
    i^r \psi^\iota_{\xi,x'}(g) \mathcal{M}\xi_{\beta_1} \cdots \mathcal{M}\xi_{\beta_r}&= dL(X_{\beta_1} \cdots X_{\beta_r}) \psi^\iota_{\xi,x'}(g)\\&+ \sum_{s=1}^{r-1} \sum _{\alpha_1,\dots, \alpha_s} d ^{\beta_1,\dots, \beta_r}_{\alpha_1,\dots, \alpha_s} (x' ,g) dL(X_{\alpha_1} \cdots X_{\alpha_s}) \psi^\iota_{\xi,x'}(g),
\end{split}  
\end{align}
where the coefficients $ d ^{\beta_1,\dots, \beta_r}_{\alpha_1,\dots, \alpha_s} (x' ,g) $ are smooth functions given by the matrix coefficients of $\mathcal{M}(x',g)$ which are  at most of exponential growth  in $g$, and independent of $\xi$, see Lemma 4 in \cite{parthasarathy-ramacher11}.
The key step in proving the theorem is that, as an immediate consequence of equations \eqref{24} and \eqref{25},  we can express $(1+|\xi|^2)^N$  as a linear combination of derivatives $d L(X^\alpha) \psi^\iota_{\xi,x'}(g)$, obtaining for arbitrary $N \in \N$ and $x' \in \phi_\iota(\widetilde U_p)$ the equality 
\begin{equation}
  \label{26}
 \psi^\iota_{\xi,x'}(g)(1+|\xi|^2)^N=  \sum_{r=0}^{2N} \sum_{|\alpha| =r} b^N_\alpha(x',g) d L(X^\alpha) \psi^\iota_{\xi,x'}(g),
\end{equation}
where the coefficients $b^N_\alpha(x',g) $  are at most of exponential growth in  $g$. Let us now show that $  a_f^{\iota\tilde \iota}(x,\xi;k_1,k_2) \in \Syms(W_\iota \times \R^{n}_\xi)$ for each fixed $k_1,k_2 
\in K$. Note that  $ a_f^{\iota\tilde \iota}(x,\xi;k_1,k_2) \in \Cinft(W_\iota \times \R^{n}_\xi\times K\times K)$. While differentiation with respect to $\xi$ does not alter the growth properties of the functions  $ a_f^{\iota\tilde \iota}(x,\xi;k_1,k_2)$, differentiation with respect to $x$  yields additional powers in $\xi$.  As one computes,  $(\gd^\alpha_\xi \gd^\beta_x   a ^{\iota\tilde \iota} _f)(x,\xi;k_1,k_2)$ is a finite sum of terms of the form
\begin{equation*}
  \xi^{\delta} e^{-i\phi_\iota^{k_1k_2}(x) \cdot \xi} \int_{G}  \psi^\iota_{\xi,x}(g) (L(k_1)R(k_2^{-1})f)(g) d_{\beta'\beta''}^\delta (x,k_1,k_2,g)(\gd ^{\beta'}_x c_\iota)(x,g) \gd ^{\beta''}_x [\alpha'_{\tilde \iota}(k_1 k_2 \cdot \phi_\iota^{-1}(x))] \d g,
\end{equation*}
the functions $d_{\beta'\beta''}^\delta(x,k_1,k_2,g) $ being at most of exponential growth in $g$. Let next $f_1\in\S(G)$, and assume that $f_2 \in \Cinft(G)$, together with all its derivatives, is at most of exponential growth. Then, by \cite{ramacher06}, Proposition 1, we have
\begin{equation}
\label{eq:9}
\int_{G}f_1(g) dL(X^\iota) f_2(g) d_{G}(g)=(-1)^{|\iota|} \int _{G}
dL(X^{\tilde \iota}) f_1(g) f_2(g) d_{G}(g),
\end{equation}
where for  $X^\iota=X^{\iota_1}_{i_1}\dots X^{\iota_r}_{i_r}$ we wrote $X^{\tilde \iota}=X^{\iota_r}_{i_r}\dots X^{\iota_1}_{i_1}$, $\iota$ being an  arbitrary multi-index.  
Let now  $\mathcal{O}$ denote an arbitrary  compact set  in $W_\iota$. By Heine--Borel, $\phi_\iota^{-1}(\mathcal{O})$ can be covered by a finite number of neighborhoods $\widetilde U_p$.  
Making use of equation \eqref{26}, and integrating according  to \eqref{eq:9}, we obtain for arbitrary multi-indices $\alpha, \beta$ the estimate
\begin{equation*}
  |(\gd ^\alpha_\xi \gd ^\beta _x  a_f^{\iota\tilde \iota}) (x,\xi;k_1,k_2) | \leq \frac 1 {(1+\xi^2)^N} C_{\alpha,\beta,\mathcal{O}} \qquad x \in \mathcal{O},
\end{equation*}
where $N\in \N$, since $L(k_1) R(k^{-1}_2) f \in \S(G)$. This proves that $ a_f^{\iota\tilde \iota}(x,\xi;k_1,k_2) \in \Syms(W_\iota \times \R^{n}_\xi)$ for each fixed $k_1,k_2 \in K$. Since equation \eqref{eq:2} is an immediate consequence of the Fourier inversion formula, the proof of the theorem is now complete.
\end{proof}

Let $dM$ be a fixed $G$-invariant density on $M$, and denote by $\L^2(M)$ the space of square integrable functions on $M$. 
In case that $M$ is compact, the fact that the integral operators $\pi(f)$ have smooth kernels implies that they are trace-class operators in $\L^2(M)$. Indeed, one has the following
\begin{lemma}
\label{Mia}
Let $\bf X$  be a compact manifold of dimension $n$ with volume form $d\, \bf X$. Let $k:\bf X\times \bf X \to \C$ be a kernel function of class $\mathrm{C}^{(n+1)}(\bf X \times \bf X)$. Then the operator
\[
(Kf)(p) = \int_{\bf X} k(p,q)f(q)d\, {\bf{X}} (q),  \qquad f \in L^2({\bf X}, d \, {\bf X}),
\]
is trace class, and $tr\,K = \int_{\bf X} k(p, p)  \d\, {\bf X} (p)$.
\end{lemma}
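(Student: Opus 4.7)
The plan is to expand the kernel in an orthonormal basis of eigenfunctions of a Laplace-type operator on ${\bf X}$, deduce absolute summability of the expansion coefficients from the regularity $k\in\mathrm{C}^{n+1}$, and then evaluate the trace directly.

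First I would equip ${\bf X}$ with a Riemannian metric whose associated volume form equals $d\,{\bf X}$ -- up to conformally rescaling any chosen background metric, this can always be arranged -- and denote by $\Delta$ the corresponding Laplace--Beltrami operator. Let $\{\phi_j\}_{j\geq 0}$ be an orthonormal basis of $\L^2({\bf X},d\,{\bf X})$ consisting of eigenfunctions of $\Delta$, with eigenvalues $0=\lambda_0\leq\lambda_1\leq\cdots$ satisfying Weyl's asymptotics $\lambda_j\sim c\,j^{2/n}$ as $j\to\infty$. The products $\phi_i(p)\overline{\phi_j(q)}$ then form an orthonormal basis of $\L^2({\bf X}\times{\bf X})$ consisting of eigenfunctions of the product Laplacian $\Delta_p+\Delta_q$ with eigenvalues $\lambda_i+\lambda_j$. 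Writing
\[
k(p,q)=\sum_{i,j}a_{ij}\,\phi_i(p)\overline{\phi_j(q)},\qquad a_{ij}=\int\!\!\int k(p,q)\overline{\phi_i(p)}\phi_j(q)\,d\,{\bf X}(p)\,d\,{\bf X}(q),
\]
the hypothesis $k\in\mathrm{C}^{n+1}\subset \Sob^{n+1}({\bf X}\times{\bf X})$ -- valid because ${\bf X}$ is compact -- yields $\sum_{i,j}(1+\lambda_i+\lambda_j)^{n+1}|a_{ij}|^{2}<\infty$.

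The crucial step is to establish $\sum_{i,j}|a_{ij}|<\infty$. By Cauchy--Schwarz,
\[
\sum_{i,j}|a_{ij}|\leq\Bigl(\sum_{i,j}(1+\lambda_i+\lambda_j)^{n+1}|a_{ij}|^{2}\Bigr)^{1/2}\Bigl(\sum_{i,j}(1+\lambda_i+\lambda_j)^{-(n+1)}\Bigr)^{1/2},
\]
and the elementary inequality $(1+\lambda_i+\lambda_j)^{2}\geq(1+\lambda_i)(1+\lambda_j)$ dominates the second factor by $\bigl(\sum_i(1+\lambda_i)^{-(n+1)/2}\bigr)^{2}$, which is finite by Weyl's law since $(n+1)/n>1$. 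The identity $K\phi_j=\sum_i a_{ij}\phi_i$ now expresses $K$ as the operator with matrix $(a_{ij})$ in the basis $\{\phi_j\}$; the truncated operators $K_N$ with matrix $(a_{ij})_{i,j\leq N}$ are finite rank, and $\|K-K_N\|_{\mathrm{tr}}\leq\sum_{\max(i,j)>N}|a_{ij}|\to 0$. Hence $K$ is trace class and $\tr K=\sum_j\langle K\phi_j,\phi_j\rangle=\sum_j a_{jj}$.

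Finally, by the Sobolev embedding $\Sob^{n+1}({\bf X}\times{\bf X})\hookrightarrow\mathrm{C}^0$ -- which holds because $n+1>n=\tfrac12\dim({\bf X}\times{\bf X})$ -- the series for $k$ converges uniformly, so restricting to the diagonal and integrating termwise yields
\[
\int_{{\bf X}}k(p,p)\,d\,{\bf X}(p)=\sum_{i,j}a_{ij}\int_{{\bf X}}\phi_i(p)\overline{\phi_j(p)}\,d\,{\bf X}(p)=\sum_j a_{jj}=\tr K.
\]
The principal obstacle is verifying the absolute summability $\sum|a_{ij}|<\infty$; the argument only closes because the regularity order $n+1$ simultaneously exceeds $n/2$ (the Weyl summability threshold needed in the trace-class step) and $n$ (the Sobolev embedding threshold needed in the diagonal-integration step).
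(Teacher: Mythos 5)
Your argument is correct and self-contained. Note that at this point the paper does not give its own proof: it simply cites Miatello (\cite{miatello80}, Lemma 2.2), so there is nothing in the text to compare against line by line. Your route -- expanding $k$ in the orthonormal basis of products of Laplace eigenfunctions, using $C^{n+1}\subset H^{n+1}({\bf X}\times{\bf X})$ to get weighted $\ell^2$ decay of the coefficients $a_{ij}$, then passing to $\ell^1$ decay via Cauchy--Schwarz and Weyl's law -- is the classical Mercer-type argument and differs somewhat from the way Miatello phrases it. Miatello's proof factors the operator as a product of two Hilbert--Schmidt operators, essentially writing $K=(1+\Delta)^{-m}\circ\bigl((1+\Delta)^{m}K\bigr)$ and invoking that a product of Hilbert--Schmidt operators is trace class; your approach instead proves absolute summability of the matrix entries directly and obtains the trace by uniform convergence of the eigenfunction series on the diagonal. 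The two arguments are cousins: the factorization approach is shorter and does not need Weyl's law explicitly, while yours is more hands-on and makes the role of the threshold $n+1$ transparent (it exceeds both the Sobolev exponent $n$ needed for continuity on ${\bf X}\times{\bf X}$ and the summability threshold $n/2$ needed in the Cauchy--Schwarz step). All the individual steps you use -- the conformal adjustment of the metric so that $d\,{\bf X}$ is its volume form, the identification of the $H^{n+1}$ norm with the weighted coefficient sum for the product Laplacian, the inequality $(1+\lambda_i+\lambda_j)^2\geq(1+\lambda_i)(1+\lambda_j)$, the bound $\|T\|_{\mathrm{tr}}\leq\sum_{i,j}|b_{ij}|$ for a matrix in an ONB, and the Sobolev embedding $H^{n+1}({\bf X}\times{\bf X})\hookrightarrow C^{0}$ since $n+1>n=\tfrac12\dim({\bf X}\times{\bf X})$ -- check out. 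The proof is valid.
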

\begin{proof}
See \cite{miatello80}, Lemma 2.2.
\end{proof}

In our situation, we obtain 

\begin{corollary}
\label{cor:1}
Let $M$ be a compact, $\Cinft$-manifold of dimension $n$,  and $G$ a connected, real, semisimple Lie group with finite center acting on $M$ in a transitive way. If $f\in \S({G})$, then  $\pi(f)$ is a trace class operator in $\L^2(M)$, and 
\bq
\label{eq:10}
\tr  \pi(f)=\sum_\iota \int_{W_\iota} (\alpha_\iota \circ \phi_{\iota}^{-1})(x) K_{A_f^\iota} (x,x) \d x=\sum_\iota \int_{M} \alpha_\iota (p) K_{A_f^\iota} (\phi_\iota(p),\phi_\iota({p})) j_\iota (p) dM(p),
\eq
where $d x$ denotes Lebesgue measure in $\rn $, and  $(\phi_\iota)^\ast (dx) =j_\iota d M$.

\end{corollary}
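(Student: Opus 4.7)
The strategy is to reduce the corollary to Lemma \ref{Mia} applied on the compact manifold $M$, and then recover the stated formulas by a partition-of-unity decomposition followed by a change of variables into each chart. The work splits naturally into two parts: (i) verifying that $\pi(f)$ admits a smooth Schwartz kernel on all of $M\times M$, and (ii) relating the diagonal of that global kernel to the local kernels $K_{A_f^\iota}$.

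For (i), Theorem \ref{thm:2} already provides smoothness of the Schwartz kernel $\mathcal{K}_f$ of $\pi(f)$ on each $\widetilde W_\iota\times\widetilde W_\iota$, via formula \eqref{eq:3}. To upgrade to smoothness on $\widetilde W_{\tilde\iota}\times\widetilde W_\iota$ with $\iota\neq\tilde\iota$, I would invoke the same argument carried out in the proof of Theorem \ref{thm:2}, this time applied to the cross-chart amplitude $a_f^{\iota\tilde\iota}(x,\xi;k_1,k_2)$ defined in \eqref{eq:symk} (with $k_1=k_2=e$). The integration-by-parts estimates there, which rely only on the local transitivity of $G$, the rapid decay of $f\in\S(G)$, and the bound \eqref{26}, go through without modification and show $a_f^{\iota\tilde\iota}\in\Syms$; consequently the associated Schwartz kernel is smooth on the relevant coordinate patch. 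Since $M$ is compact, this yields $\mathcal{K}_f\in\Cinft(M\times M)$, and Lemma \ref{Mia} applies, giving
$$\tr\pi(f)=\int_M \mathcal{K}_f(p,p)\,dM(p).$$

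For (ii), I would insert $\sum_\iota \alpha_\iota\equiv 1$ inside the integral and work chart-by-chart. The link between the local and global kernels is obtained by unwinding $A_f^\iota u(x)=\pi(f)(u\circ\phi_\iota)(\phi_\iota^{-1}(x))$ for $u\in\CT(W_\iota)$: applying the Schwartz kernel representation of $\pi(f)$ and performing the substitution $q=\phi_\iota^{-1}(y)$ relates $K_{A_f^\iota}(x,y)\,dy$ to $\mathcal{K}_f(\phi_\iota^{-1}(x),\phi_\iota^{-1}(y))\,dM(\phi_\iota^{-1}(y))$ via the density relation $(\phi_\iota)^\ast(dx)=j_\iota dM$. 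Restricting to the diagonal and pushing forward through $\phi_\iota$ produces the two equivalent expressions in \eqref{eq:10}: the first form is obtained after changing variables to $W_\iota\subset\R^n$, while the second follows by leaving the integration on $M$ and absorbing the Jacobian into $j_\iota$.

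The main technical obstacle is the extension of the smoothness statement of Theorem \ref{thm:2} to the off-diagonal chart pairs needed to conclude that $\mathcal{K}_f$ is globally smooth on $M\times M$; once this is established, the remainder of the proof is a routine partition-of-unity/change-of-variables computation. No deeper ingredient beyond the symbol estimates already developed in the proof of Theorem \ref{thm:2} and Miatello's trace lemma is required.
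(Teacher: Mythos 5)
Your approach is the same as the paper's: establish global smoothness of $\mathcal{K}_f$, apply Lemma~\ref{Mia}, then use a partition of unity and a change of variables; your part (ii) correctly recovers both expressions in \eqref{eq:10}, and the paper's own proof is exactly this. One caveat on part (i): the object you propose to use, $a_f^{\iota\tilde\iota}(x,\xi;e,e)$ from \eqref{eq:symk}, is \emph{not} the amplitude of the cross-chart localization $\alpha'_{\tilde\iota}\pi(f)\alpha'_\iota$. Setting $k_1=k_2=e$ in \eqref{eq:symk} gives
\[
a_f^{\iota\tilde\iota}(x,\xi;e,e)=e^{i\left(x-\phi_{\tilde\iota}(\phi_\iota^{-1}(x))\right)\cdot\xi}\,\alpha'_{\tilde\iota}(\phi_\iota^{-1}(x))\,a_f^\iota(x,\xi),
\]
i.e.\ the diagonal-chart symbol $a_f^\iota$ multiplied by an extra phase and a cutoff in the $\tilde\iota$ coordinates; this is a device the paper introduces for the trace computation in Proposition~\ref{prop:1}, not a two-chart symbol. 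The genuine cross-chart amplitude, for $u\in\CT(W_\iota)$ with output written in the $\tilde\iota$ coordinate $y\in W_{\tilde\iota}$, is $b(y,\xi)=e^{-iy\cdot\xi}\int_G f(g)\,e^{i\phi_\iota(g\cdot\phi_{\tilde\iota}^{-1}(y))\cdot\xi}\,\alpha'_\iota(g\cdot\phi_{\tilde\iota}^{-1}(y))\,dg$, and the integration-by-parts machinery of Theorem~\ref{thm:2} --- which depends only on local transitivity of $G$ and $f\in\S(G)$ --- applies to $b$ verbatim, so your intended argument does go through once the correct amplitude is substituted. (The paper itself glosses over this point, citing Theorem~\ref{thm:2} directly for $\mathcal{K}_f\in\Cinft(M\times M,{\bf 1}\boxtimes\Omega_M)$, so your instinct to elaborate was sound; just be careful which amplitude does the job.)
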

\begin{proof}
By Theorem  \ref{thm:2} ,  $\mathcal{K}_f\in \Cinft(M\times M, {{\bf 1}} \boxtimes \Omega_{M})$. Locally, the kernel $\mathcal{K}_f$ is determined by the smooth functions \eqref{eq:3}. Restricting the latter to the respective diagonals in $W_\iota$, one obtains a family of functions on $M$
\bqn 
k_f^\iota ({p})=  K_{A_f^\iota} (\phi_\iota(p),\phi_\iota({p})), \qquad {p} \in\widetilde W_\iota,
\eqn
which define a density  $k_f dM \in \Cinft(M, \Omega_M)$ on $M$. Since $M$ is compact, it can be integrated, and by   Lemma \ref{Mia}  we get 
\begin{align*} 
\tr  \pi(f) &= \int_M k_f (p) \, dM(p)=\sum_\iota \int_{W_\iota} (\alpha_\iota \circ \phi_{\iota}^{-1})(x) K_{A_f^\iota} (x,x) \d x=\sum_\iota \int_{\widetilde W_\iota} \alpha_\iota (p) k_f^\iota (p) j_\iota (p) dM(p),
\end{align*}
where we wrote $(\phi_\iota)^\ast (dx) =j_\iota d M$. 
\end{proof}

\section{Equivariant heat asymptotics}

From now on, let $M$ be a closed, real-analytic Riemannian manifold of dimension $n$,  and $G$ a connected, real, semisimple Lie group with finite center acting transitively and isometrically on $M$. Assume that $M$ is endowed with a $G$-invariant density $dM$. Consider further a maximal compact subgroup $K$ of $G$, and let  $\widehat{K}$ denote the set of all equivalence classes of unitary irreducible representations of $K$.
Let $(\pi_\sigma,V_\sigma)$ be a unitary irreducible representation of $K$  of dimension $d_\sigma$ belonging to $\sigma \in \widehat{K}$, and $\chi_\sigma(k)=\tr \pi_\sigma(k)$ the corresponding character. As a unitary representation of $K$, $(\pi,\L^2(M))$ decomposes into isotypic components according to 
\bqn 
\L^2(M)\simeq \bigoplus_{\sigma \in \widehat{K}} \L^2(M)_\sigma,
\eqn
where  $ \L^2(M)_\sigma=P_\sigma ( \L^2(M))$, and 
$ 
P_\sigma=d_\sigma \int _K \overline{\chi_\sigma(k)} \pi(k) \d k$ 
is the corresponding projector  in $\L^2(M)$, $dk$ being a Haar measure on $K$.  Let $f \in \S(G)$, and consider the restriction $P_\sigma \circ \pi(f) \circ P_\sigma$ of the integral operator $\pi(f)$ to the isotypic component $\L^2(M)_\sigma$. As one computes, for  $\phi \in \L^2(M)$,
\begin{align*}
[P_\sigma \circ \pi(f) \circ P_\sigma] \phi(p)&=d^2_\sigma \int_K \overline{\chi_\sigma(k)} [\pi(f) \circ P_\sigma ] \phi(k \cdot p)\d k\\
&= d^2_\sigma \int_K \int_G \overline{\chi_\sigma(k)} f(g) P_\sigma \phi (g k \cdot p) \d g \d k\\
&= d^2_\sigma \int_K \int_G \int_K \overline{\chi_\sigma(k)} f(g)\overline{\chi_\sigma(k_1)} \phi (k_1  g  k \cdot p) \d k_1 \d g \d k.
\end{align*}
 Since $G$ is unimodular, one obtains
\bq
\label{eq:11}
P_\sigma \circ \pi(f) \circ P_\sigma= \pi(H^\sigma_f),
\eq
where $H^\sigma_f \in \S(G)$  is given by
\bq
\label{eq:11a}
H^\sigma_f(g)=d^2_\sigma \int_K \int_K f(k_1^{-1} g k^{-1}) \overline{\chi_\sigma(k_1)} \overline{\chi_\sigma(k)}d k \d k_1. 
\eq
Clearly,  $H^\sigma_f \in \S(G)$, compare \cite{barbasch-moscovici83}, Proposition 2.4.
Note that if $f$ is $K$-bi-invariant, $\pi(f) $ commutes with $P_\sigma$, so that $P_\sigma \circ \pi(f) \circ P_\sigma=P_\sigma \circ \pi(f)=\pi(f) \circ P_\sigma$.
In Section 5, we shall also consider kernels of the form
\bqn 
\int_K \int_K f(k^{-1}_1 g k^{-1}) \sigma_{ij}(k)\sigma_{lm}(k_1)\d k \d k_1
\eqn
where  $\sigma_{ij}(k)=\eklm{e_i,\pi_\sigma(k) e_j}$ are matrix elements of $\sigma$ with respect to a basis $\mklm{e_i}$ of $V_\sigma$.  With the notation  as  in the previous section we now have   the following

\begin{proposition}
\label{prop:1}
Let  $f \in \S(G)$, and $\sigma \in \widehat{K}$.  Then   $\pi(H^\sigma_f)$ is of trace class, and 
\begin{align*}
\tr \,  \pi(H^\sigma_f)=&\frac{d_\sigma^2}{(2\pi)^n} \sum_{\iota, \tilde \iota}    \int_K  \int_K  \int_{T^\ast M}    e^{i\Phi_{\iota\tilde \iota} (p,\xi, k_1,k) } \alpha_\iota ( p)   \alpha_{\tilde \iota}( k_1k \cdot p )  \overline{\chi_\sigma(k_1)} \overline{\chi_\sigma(k)}   \\
&\cdot  a_{f}^{\iota\tilde \iota} (\phi_\iota( p),\xi;k_1,k)  j_\iota(p) d(T^\ast M)(p,\xi)\d k \d k_1,
\end{align*}
where $d(T^\ast M)(p,\xi)$ denotes the canonical density on the cotangent bundle $T^\ast M$, and we set  
$$
\Phi_{\iota\tilde \iota} (p,\xi,k_1, k)= (\phi_{\tilde \iota}(k_1k \cdot p)- \phi_\iota(  p)) \cdot \xi,
$$
 while  $ a_f^{\iota\tilde \iota}(x,\xi;k_1,k_2) \in \Syms(W_\iota \times \R^{n}_\xi)$ was defined in \eqref{eq:symk}. 
\end{proposition}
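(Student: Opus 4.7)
The plan is to apply Corollary \ref{cor:1} to the function $H_f^\sigma$ and then unwind the definition to express the trace in terms of the generalized amplitudes $a_f^{\iota\tilde\iota}$. As a first step, I would note that $H_f^\sigma \in \S(G)$ (essentially because averaging a Schwartz function against continuous compactly supported kernels in the $K$-variables preserves the Schwartz property, as in \cite{barbasch-moscovici83}, Proposition 2.4), so that by Corollary \ref{cor:1} the operator $\pi(H_f^\sigma) = P_\sigma \circ \pi(f) \circ P_\sigma$ is of trace class, and
\begin{equation*}
\tr \pi(H_f^\sigma) = \sum_\iota \int_M \alpha_\iota(p) \int a_{H_f^\sigma}^\iota(\phi_\iota(p),\xi) \, \dbar\xi \, j_\iota(p) \, dM(p),
\end{equation*}
since $K_{A_{H_f^\sigma}^\iota}(x,x) = \int a_{H_f^\sigma}^\iota(x,\xi)\,\dbar\xi$ by \eqref{eq:3}.

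The next step is to substitute \eqref{eq:11a} into the definition \eqref{eq:sym} of the symbol $a_{H_f^\sigma}^\iota(x,\xi)$. Performing the change of variable $g \mapsto k_1 g k$ in the integral over $G$, which is legitimate because $G$ is unimodular, one obtains
\begin{equation*}
a_{H_f^\sigma}^\iota(x,\xi) = d_\sigma^2\,e^{-ix\cdot\xi} \int_K \int_K \overline{\chi_\sigma(k_1)}\,\overline{\chi_\sigma(k)} \int_G e^{i\phi_\iota^{k_1 g k}(x)\cdot\xi}\, c_\iota(x,k_1 g k)\, f(g)\, dg\, dk\, dk_1.
\end{equation*}
Fubini applies since $f$ is rapidly decaying and $K$ is compact.

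Now I would insert the partition of unity $\sum_{\tilde\iota} \alpha_{\tilde\iota}(k_1 k \cdot \phi_\iota^{-1}(x)) = 1$. On the support of each $\alpha_{\tilde\iota}$ one has $\alpha'_{\tilde\iota} \equiv 1$ by property (c'), so comparing with the definition \eqref{eq:symk} the inner $G$-integral can be rewritten as
\begin{equation*}
\sum_{\tilde\iota} \alpha_{\tilde\iota}(k_1 k \cdot \phi_\iota^{-1}(x))\, e^{i\phi_{\tilde\iota}^{k_1 k}(x)\cdot\xi}\, a_f^{\iota\tilde\iota}(x,\xi;k_1,k).
\end{equation*}
Substituting back, setting $x = \phi_\iota(p)$, and combining the exponential factors yields precisely the phase $\Phi_{\iota\tilde\iota}(p,\xi,k_1,k) = (\phi_{\tilde\iota}(k_1 k \cdot p) - \phi_\iota(p))\cdot\xi$.

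Finally, I would insert this expression back into the trace formula and interchange the remaining orders of integration, which is permissible because $a_f^{\iota\tilde\iota}(\,\cdot\,,\,\cdot\,;k_1,k) \in \mathrm{S}^{-\infty}$ for each fixed $(k_1,k) \in K\times K$ (by Theorem \ref{thm:2}, applied with fixed parameters), so all integrals converge absolutely. The factor $(2\pi)^{-n}$ appears from $\dbar\xi$, the Jacobian $j_\iota(p)\,dM(p)$ combines with the Lebesgue $d\xi$ to give the canonical density $d(T^\ast M)(p,\xi)$ in the chart $\iota$, and the sum over the partition of unity in the $p$-variable is retained via the factor $\alpha_\iota(p)$. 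The only genuinely technical point is verifying the absolute convergence needed for Fubini and the identification of the local pieces with a well-defined integral against $d(T^\ast M)$; both follow from the $\mathrm{S}^{-\infty}$-estimates of Theorem \ref{thm:2}, so no serious obstacle arises.
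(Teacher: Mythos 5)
Your argument is correct and follows essentially the same path as the paper's proof: unwind Corollary \ref{cor:1} for the amplitude $a_{H_f^\sigma}^\iota$, use unimodularity to substitute $g \mapsto k_1 g k$, insert the partition of unity $\sum_{\tilde\iota}\alpha_{\tilde\iota}\alpha'_{\tilde\iota} = \sum_{\tilde\iota}\alpha_{\tilde\iota} = 1$ at the image point $k_1 k \cdot p$, and recognize the definition \eqref{eq:symk} after splitting the phase. The only cosmetic difference is that the paper justifies the interchange of integrals via a cutoff $\psi(\epsilon\xi)$ and dominated convergence, whereas you invoke the absolute convergence coming from the $\Syms$-estimates directly; both are legitimate.
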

\begin{proof}
By Corollary \ref{cor:1},  $\pi(H^\sigma_f)$ is of trace class, and at the microlocal level  one has 
\begin{align*}
\Big [ \pi(H^\sigma_f) ( u \circ \phi_\iota) \Big ] ( \phi_\iota^{-1} (x))=A_{H^\sigma_f}^\iota u(x), \qquad u \in \CT(W_\iota), 
\end{align*}
where $A_{H^\sigma_f}^\iota$ is given by \eqref{eq:2}. By the unimodularity of $G$, together with \eqref{eq:sym} and \eqref{eq:3}, 
\begin{align*}
K_{A_{H^\sigma_f}^\iota}(x,y)&=\int e^{i(x-y)\cdot \xi} a_{H^\sigma_f}^\iota(x,\xi)\, \dbar \xi
= \int \left [\int_{G}e^{i(\phi_{\iota}^{g}(x)-y)\cdot\xi} c_{\iota}(x,g)H^\sigma_f(g) \d g \right ] \, \dbar \xi \\
&= d^2_\sigma\int \left [ \int_G  \int_K \int_K f( g)  e^{i(\phi_{\iota}^{k_1gk}(x)-y)\cdot\xi} c_{\iota}(x,k_1gk)\overline{\chi_\sigma(k_1)} \overline{\chi_\sigma(k)} \d k \d k_1 \d g\right ] \, \dbar \xi.
\end{align*}
Let $\psi \in \CT(\rn,\R^+)$ be equal $1$ near the origin, and $\epsilon >0$. By Lebesgue's theorem on bounded convergence,
\bqn 
K_{A_{H^\sigma_f}^\iota}(x,y)=\lim_{\epsilon \to 0} \int e^{i(x-y)\cdot \xi} a_{H^\sigma_f}^\iota(x,\xi) \psi(\epsilon \xi) \, \dbar \xi,
\eqn
since $a_{H^\sigma_f}^\iota(x,\xi)$ is rapidly falling in $\xi$. 
Arguing as in the proof of Corollary \ref{cor:1}, one obtains for $\tr  \pi(H^\sigma_f)$ the expression
\begin{gather*}
\lim _{\epsilon \to 0} d_\sigma^2 \sum_\iota   \int_{\widetilde W_\iota} \int  \int_G  \int_K \int_K  e^{i(\phi_{\iota}(k_1gk\cdot p)-\phi_\iota(p))\cdot\xi}  f( g) \alpha_\iota (p)    c_{\iota}(\phi_\iota(p),k_1gk)\overline{\chi_\sigma(k_1)} \overline{\chi_\sigma(k)} \psi(\epsilon \xi) \\
\cdot  j_\iota(p)   \d k \d k_1 \d g \, \dbar \xi \d M(p)\\
=\lim _{\epsilon \to 0} d_\sigma^2 \sum_{\iota,\tilde \iota}  \int_{ W_\iota} \int  \int_G  \int_K \int_K   e^{i(\phi_{\iota}(k_1 g k \cdot p)- \phi_{\tilde \iota}(k_1k  \cdot p)) \cdot \xi} e^{i(\phi_{\tilde \iota}(k_1k \cdot p)-\phi_\iota( p))\cdot\xi}f(g)  \alpha_\iota ( p)     \\
\cdot   \alpha'_\iota (k_1 g k \cdot p) \alpha _{\tilde \iota}( k_1k \cdot p ) \alpha'_{\tilde \iota} (k_1k  \cdot p)\overline{\chi_\sigma(k_1)} \overline{\chi_\sigma(k)} \psi(\epsilon \xi)  j_\iota(  p)  \,  \d k \d k_1 \d g \dbar \xi \d M(p) \\
=\lim _{\epsilon \to 0}d_\sigma^2 \sum_{\iota, \tilde \iota}    \int_K  \int_K  \int_{\widetilde W_\iota} \int   e^{i(\phi_{\tilde \iota}(k_1 k\cdot p)-\phi_\iota(p))\cdot\xi} \alpha_\iota (p)  \alpha_{\tilde \iota} (k_1k \cdot p)  \overline{\chi_\sigma(k_1)} \overline{\chi_\sigma(k)}  \psi(\epsilon \xi) \\
\cdot  a_{f}^{\iota\tilde \iota} (\phi_\iota(p),\xi;k_1,k)  j_\iota(p)  \, \dbar \xi \d M(p)\d k \d k_1, 
\end{gather*}
where  the change of  order of integration is permissible, since everything is absolutely convergent.  
Note that we used the equality
\bqn 
1= \sum_{\tilde \iota} \alpha _{\tilde \iota}( k_1k \cdot p ) \alpha'_{\tilde \iota} (k_1k \cdot p). 
\eqn 
 Finally, it was shown in the proof of Theorem \ref{thm:2} that  $a_{f}^{\iota\tilde \iota} (\phi_\iota(p),\xi;k_1,k)$ is rapidly falling in $\xi$,  so that we can pass to the limit under the integral, and the assertion  follows.
\end{proof}

In what follows, we shall address  the case where $f=f_t\in \S(G)$, $t >0$, is the Langlands kernel of a semigroup generated by a strongly elliptic operator associated to the representation $\pi$. Our main goal will  be the derivation of  asymptotics for 
\bqn
\tr   \pi(H^\sigma_{f_t})= \tr  (P_\sigma \circ \pi(f_t)\circ P_\sigma) 
\eqn
 as $t \to 0^+$. Thus, let $\G$ be a Lie group and $(\pi, \B)$ a continuous representation  of $\G$ in some Banach space $\B$. Denote by  $\g$ the Lie algebra of $\G$, and by $X_1, \dots, X_d$ a basis of it. Consider further a strongly  elliptic differential operator of order $q$ associated to $\pi$
\begin{equation}\label{strellop}
\Omega = \sum_{|\alpha| \le q} c_\alpha d\pi(X^\alpha),
\end{equation}
meaning that  $\Re (-1)^{q/2}\displaystyle\sum_{\alpha = q} c_\alpha \xi^\alpha \ge \kappa|\xi|^q$ for all $\xi \in \R^d$, and some $\kappa >0$. The general theory of strongly continuous semigroups establishes that its closure generates a strongly continuous holomorphic semigroup of bounded operators which is given by
\begin{equation}\label{smgrpstbndop}
S_\tau=\frac{1}{2\pi i} \int_{\Lambda} e^{\lambda \tau}(\lambda  \mathbbm{1}+ \overline{\Omega})^{-1}d\lambda,
\end{equation}
where $\Lambda$ is an appropiate path in $\C$ coming from infinity and going to infinity, and $|\arg \tau| < \eta$ for an appropiate $\eta \in (0, \pi/2]$. The integral converges uniformly with respect to the operator norm, and for $t>0$,  the semigroup $S_t$ can be characterized by a convolution semigroup $\mklm{\mu_t}_{t>0}$ of complexes measures on $\G$ according to
\[
S_t = \int_\G\pi(g)d\mu_t (g),
\]
the representation $\pi$ being measurable with respect to the measures $\mu_{t}$. The $\mu_t$ are absolutely continuous  with respect to Haar measure $d_\G$ on $\G$ so that, if we denote by $f_t(g) \in L^1(\G, d_\G)$ the corresponding Radon-Nikodym derivatives,  one has an expressions
\begin{equation}\label{smgrlangkern}
S_t = \pi(f_t)= \int_G f_t(g)\pi(g)d_\G(g), \qquad t>0. 
\end{equation}
The  function $f_t(g) \in L^1(\G, d_\G)$ is analytic in $t \in \R^+_\ast $ and $g \in \G$,  universal for all Banach representations, and one can show that $f_t \in \S(\G)$. Moreover, it satisfies  the following $\L^\infty$ upper bounds. There exist constants $a,b,c_1,c_2>0$ and $\omega \geq 0$ such that 
\begin{equation}
\label{eq:boundlang}
|(dL(X^\alpha) \partial^l_t f_t)(g)|_{t=\tau} \le ac_1^{|\alpha|} c_2^l |\alpha|! \, l!\,  \tau^{-\frac{|\alpha|+d}q -l}  e^{\omega \tau} 
e^{-b(|g|^q/\tau)^{1/(q-1)}}
\end{equation}
for all $\tau>0$, $g \in G$, $ l \in \N$, and multi-indices $\alpha$. For a complete exposition of these facts the reader is referred  to \cite{robinson}, pages 30, 152, and 209, or \cite{ter_elst-robinson}. In what follows, we shall call $f_t$ the \emph{Langlands, or group kernel} of the holomorphic semigroup $S_t$.  Returning to our situation, let  $\G=G$, and $\pi$ be  the regular representation of $G$ on $\L^2(M)$. Let us mention that as a
consequence of the bounds  \eqref{eq:boundlang}, we have the following 
\begin{corollary}
There exist constants $a,b,c_1,c_2>0$ and $\omega \geq 0$ such that 
\begin{equation*}
|(dL(X^\alpha) \partial^l_t H^\sigma _{f_t})(g)|_{t=\tau} \le ac_1^{|\alpha|} c_2^l |\alpha|! \, l!\,  \tau^{-\frac{|\alpha|+d}q -l}  e^{\omega \tau} 
e^{-b(\frac{d(gK,K)^q}{\tau})^{1/(q-1)}}
\end{equation*}
for all $\tau>0$, $g \in G$, $ l \in \N$, and multi-indices $\alpha$. 
\end{corollary}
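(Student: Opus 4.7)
The plan is to reduce the stated estimate directly to the $L^\infty$-bound \eqref{eq:boundlang} for $f_t$ itself, exploiting that $H^\sigma_{f_t}$ is obtained from $f_t$ by the compact double integration \eqref{eq:11a}. Since the characters satisfy $|\chi_\sigma(k)|\le d_\sigma$ on $K$ and Haar measure on $K\times K$ is finite, I would commute $dL(X^\alpha)\,\partial_t^l$ past the integrals in \eqref{eq:11a}, so that the whole question is reduced to an estimate, uniform in $(k_1,k)\in K\times K$, on
\[
dL(X^\alpha)\,\partial_t^l\bigl[g\mapsto f_t(k_1^{-1}g k^{-1})\bigr](g).
\]

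First I would translate the left $K$-invariance structure through $dL$. With the convention $dL(X)F(g)=\frac{d}{ds}\big|_{s=0}F(e^{-sX}g)$ used in the proof of Theorem \ref{thm:2}, right translation by $k^{-1}$ does not interact with $dL$, while a left translation by $k_1^{-1}$ can be pulled through $dL(X)$ by conjugation, $k_1^{-1}e^{-sX}=e^{-s\Ad(k_1^{-1})X}k_1^{-1}$, at the cost of twisting $X$ by $\Ad(k_1^{-1})$:
\[
dL(X)\bigl[g\mapsto f_t(k_1^{-1}g k^{-1})\bigr](g)=\bigl[dL(\Ad(k_1^{-1})X)f_t\bigr](k_1^{-1}g k^{-1}).
\]
Iterating for a multi-index $\alpha$ with $X^\alpha=X_{i_1}\cdots X_{i_r}$, and expanding each $\Ad(k_1^{-1})X_{i_j}=\sum_{l}A_{jl}(k_1)X_l$ in the basis $X_1,\dots,X_d$, produces a finite sum of at most $d^{|\alpha|}$ terms of the shape $C(k_1)\,dL(X^\beta)f_t(k_1^{-1}gk^{-1})$ with $|\beta|=|\alpha|$. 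Compactness of $K$ guarantees a uniform bound $|A_{jl}(k_1)|\le \Lambda_0$, so $|C(k_1)|\le \Lambda_0^{|\alpha|}$.

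Now I would apply the bound \eqref{eq:boundlang} to each such term and observe that by left $K$-invariance of the Riemannian distance $d$ on $G$,
\[
|k_1^{-1}g k^{-1}|=d(k_1^{-1}g k^{-1},e)=d(g k^{-1},k_1)\ge\inf_{k',k_1'\in K}d(g k',k_1')=d(gK,K),
\]
which produces the required exponential factor $e^{-b(d(gK,K)^q/\tau)^{1/(q-1)}}$. Integrating in $(k_1,k)$ against $|\chi_\sigma(k_1)\chi_\sigma(k)|\le d_\sigma^{2}$ and absorbing the finite combinatorial constants $d_\sigma^{4}$, $d^{|\alpha|}$ and $\Lambda_0^{|\alpha|}$ into redefined $a$ and $c_1$ yields the claim. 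I do not anticipate a serious obstacle here; the argument is essentially uniform bookkeeping. The only mildly delicate point is the $\Ad(k_1^{-1})$-twist when commuting $dL(X^\alpha)$ with left translation by $k_1^{-1}$, but this is harmless precisely because $K$ is compact, so $\sup_{k_1\in K}\|\Ad(k_1^{-1})\|<\infty$, and the extra factor $\Lambda_0^{|\alpha|}$ only enlarges the constant $c_1$.
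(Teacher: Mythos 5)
Your proof is correct and follows essentially the same route as the paper: bound $H^\sigma_{f_t}$ by the double integral over $K\times K$, apply \eqref{eq:boundlang} uniformly in $(k_1,k)$, and pass from $|k_1^{-1}gk^{-1}|$ to $d(gK,K)$. In fact you spell out the one nontrivial point that the paper dismisses with ``by applying similar arguments to the derivatives,'' namely that $dL(X^\alpha)$ applied to $g\mapsto f_t(k_1^{-1}gk^{-1})$ turns into $dL$ of an $\Ad(k_1^{-1})$-twisted monomial of the same length, so the extra factor $(d\,\Lambda_0)^{|\alpha|}$ is absorbed into $c_1$; this is exactly what makes the constants uniform in $(k_1,k)$ and is worth writing down.

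One small remark on the distance step: the paper invokes $|h|\ge d(hK,K)$ (citing M\"uller) for $h=k_1^{-1}gk^{-1}$ and then uses $G$-invariance of the metric on $G/K$ to replace $d(k_1^{-1}gK,K)$ by $d(gK,K)$, whereas you write the slightly stronger identity $\inf_{k',k_1'\in K}d(gk',k_1')=d(gK,K)$ directly. The latter holds because $G\to G/K$ is a Riemannian submersion (so $d(gK,K)=\inf_{k_1'}d(g,k_1')$) and the metric on $G$ is right $K$-invariant by Proposition~\ref{prop:Ad(k)inv}; it would be worth a half-sentence of justification, but the conclusion is the same.
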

\begin{proof}
 Clearly, 
\[
| H^\sigma _{f_t}(g)|_{t=\tau}\le d^2_\sigma \int_K \int_K | f_t (k_1^{-1} g k^{-1}))|  \d k \d k_1. 
\]
According to  \eqref{eq:boundlang} we therefore have
\[
|H^\sigma_{f_t}(g)| \le d^2_\sigma a  \,   t^{-\frac{d}q }  e^{\omega t} \int_K \int_K \exp\left(-b\left(\frac{|k_1^{-1}g k^{-1}|^q}{t}\right)^{\frac{1}{q-1}}\right) \d k \d k_1,
\]
where $|k_1^{-1}g k^{-1}| = d(k_1^{-1}g k^{-1}, e)= d(g k^{-1}, k_1)$. Put $\X=G/K$, and let $\g=\k\oplus \p$ be a Cartan decomposition of $\g$.  By restriction of the Killing form to $T_e\X \simeq \p$ one obtains an invariant Riemannian metric on $\X$ such that the canonical projection map $G \rightarrow \X$ becomes a Riemannian submersion. Now, if $d(gK,hK)$ denotes the geodesic distance on $\X$, 
\[
|g|= d(g,e) \ge d(gK, K), \qquad g \in G,
\]
compare  \cite{mueller98}, Theorem 3.1.
By applying similar arguments to the derivatives, the corollary follows.
\end{proof}

Let $\beta \in \CT(G)$, $0 \leq \beta \leq 1$ have support in a sufficiently small neighborhood $U$ of $e \in G$ satisfying $U=U^{-1}$, and assume that  $\beta=1$ close to $e$. We then have the following

\begin{theorem}
\label{thm:A}
Consider  a strongly elliptic differential operator $\Omega$ of order $q\geq 2$ associated to $(\pi, \L^2(M))$, and  the corresponding semigroup $S_t=\pi(f_t)$ with Langlands kernel $f_t$, $t>0$. Let $\sigma \in \widehat{K}$.   Then 
$$\tr   \pi(H^\sigma_{f_t})=\tr  \pi(H_{f_t\beta}^\sigma )+O(t^\infty),$$
where 
 \begin{gather*}
\tr  \pi(H_{f_t\beta}^\sigma )=\frac {d_\sigma^2} {(2\pi)^nt^{n/q}} \sum_{\iota} \int_K  \int_K \int _{T^\ast M} e^{i\Phi_{\iota\iota}(p,\xi,k_1,k)/t^{1/q}}  \alpha_\iota ( p)  \overline{\chi_\sigma(k_1)\chi_\sigma(k) }   \\ \cdot  \, b_{f_t}^{ \iota}(\phi_\iota(p),\xi/t^{1/q};k_1,k) j_\iota(p)  \d(T^\ast M)(p,\xi)\d k \d k_1, \qquad t>0,
\end{gather*}
and
\bqn 
b_{f_t}^{\iota}(\phi_\iota(p),\xi;k_1,k)=e^{-i\phi_{ \iota}({k_1k\cdot p )}\cdot \xi}   \int_{U}e^{i\phi_{\iota}(k_1gk \cdot p)\cdot\xi} c_{\iota}(\phi_\iota(p),k_1gk)f_t(g)\beta(g) \d g
\eqn
is rapidly decaying in $\xi$, and vanishes if $k_1 k \cdot p \not \in \tilde W_\iota'$. Furthermore, for any multi-indices $\alpha,\beta, \delta_1,\delta$
$$
|\gd_x^\alpha \gd ^\beta_\xi \gd _{k_1}^{\delta_1} \gd _k^\delta [b_{ f_t}^\iota(x,\xi/t^{1/q};k_1,k)]|  \leq C
$$
 for some constant $C>0$ independent of $0<t<1$. \end{theorem}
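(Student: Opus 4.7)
My plan is to proceed in three steps.

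\textbf{Step 1: Reducing to the small-support piece.} I would bound $\tr \pi(H^\sigma_{f_t(1-\beta)}) = O(t^\infty)$ directly. Since $\beta\equiv 1$ near $e$, the function $f_t(g)(1-\beta(g))$ vanishes in a fixed neighborhood of $e$, and \eqref{eq:boundlang} gives $|dL(X^\alpha)\gd_t^l(f_t(1-\beta))(g)| \leq C_{\alpha,l,N}\, t^N$ for every $N\in\N$, uniformly in $g$. Averaging over $K\times K$ preserves this estimate for $H^\sigma_{f_t(1-\beta)}$, and the integration-by-parts argument from the proof of Theorem \ref{thm:2} (using identity \eqref{eq:9}) then yields symbol bounds $|a^\iota_{H^\sigma_{f_t(1-\beta)}}(x,\xi)|\leq C_N t^N (1+|\xi|^2)^{-N}$. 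Integrating over the compact $M$ via Corollary \ref{cor:1} delivers $\tr\pi(H^\sigma_{f_t(1-\beta)})=O(t^\infty)$.

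\textbf{Step 2: Explicit form of the main term.} I apply Proposition \ref{prop:1} to $f=f_t\beta$ and collapse the double sum $\sum_{\iota,\tilde\iota}$ to a single $\sum_\iota$. The key algebraic observation is that once \eqref{eq:symk} is substituted, the factor $e^{-i\phi_{\tilde\iota}(k_1k\cdot p)\cdot\xi}$ inside $a^{\iota\tilde\iota}_{f_t\beta}$ telescopes against the corresponding term in $e^{i\Phi_{\iota\tilde\iota}}$, so that $\tilde\iota$ enters only through the product $\alpha_{\tilde\iota}(k_1k\cdot p)\alpha'_{\tilde\iota}(k_1k\cdot p) = \alpha_{\tilde\iota}(k_1k\cdot p)$, which sums to $1$. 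This reproduces the single-sum formula, and a further change of variables $\xi\mapsto\xi/t^{1/q}$ inserts the factor $t^{-n/q}$. The support property of $b^\iota_{f_t}$ follows from $c_\iota=\alpha'_\iota$: choosing $\supp\beta\subset U$ small enough, continuity of the $G$-action ensures that $k_1gk\cdot p\in\tilde W_\iota'$ for some $g\in\supp\beta$ forces $k_1k\cdot p\in\tilde W_\iota'$.

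\textbf{Step 3: Uniform derivative bound.} This I expect to be the main obstacle. The plan is to substitute exponential coordinates $g=\exp(t^{1/q}Y)$, $Y\in\g$ near $0$, so that \eqref{eq:boundlang} reads $|f_t(\exp(t^{1/q}Y))|\leq C t^{-d/q}e^{-b|Y|^{q/(q-1)}}$, while $dg=t^{d/q}(1+O(t^{1/q}))dY$; the two factors of $t^{d/q}$ cancel. The rescaled phase
\bqn
t^{-1/q}\bigl[\phi_\iota(k_1\exp(t^{1/q}Y)k\cdot\phi_\iota^{-1}(x))-\phi_\iota(k_1k\cdot\phi_\iota^{-1}(x))\bigr]\cdot\xi
\eqn
has a Taylor expansion in $t^{1/q}Y$ starting linearly, so it equals a smooth function $\Psi_t(Y,x,\xi,k_1,k)$, polynomial in $\xi$, whose coefficients are smooth and uniformly bounded in $t\in(0,1)$. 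Consequently
\bqn
b^\iota_{f_t}(x,\xi/t^{1/q};k_1,k)=\int e^{i\Psi_t(Y,x,\xi,k_1,k)}\,\rho_t(Y,x,k_1,k)\,e^{-b|Y|^{q/(q-1)}}\,dY,
\eqn
with $\rho_t$ smooth and uniformly bounded together with all its derivatives. Each application of $\gd_x$, $\gd_\xi$, $\gd_{k_1}$ or $\gd_k$ brings down a polynomial factor in $Y$ (from $\Psi_t$) or differentiates the smooth cutoffs $c_\iota,\beta$, and all such factors are absorbed by the Gaussian weight $e^{-b|Y|^{q/(q-1)}}$. The delicate part is keeping track of the $K$-derivatives through the composition $Y\mapsto\phi_\iota(k_1\exp(t^{1/q}Y)k\cdot p)$; here the smoothness of the $G$-action together with Proposition \ref{prop:Ad(k)inv} ensures no unbounded $t$-powers appear. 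The rapid decay in $\xi$ at fixed $t$ is inherited from the $\Syms$-estimates of Theorem \ref{thm:2}.
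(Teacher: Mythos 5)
Your three-step plan is essentially the paper's own argument: the cutoff $\beta$ and the decay $e^{-b(R^q/t)^{1/(q-1)}}$ kill the far-away contribution; the factor $e^{-i\phi_{\tilde\iota}(k_1k\cdot p)\cdot\xi}$ in \eqref{eq:symk} telescopes against $e^{i\Phi_{\iota\tilde\iota}}$ so that $\sum_{\tilde\iota}\alpha_{\tilde\iota}\alpha'_{\tilde\iota}=1$ collapses the double sum; and the rescalings $\xi\mapsto\xi/t^{1/q}$, $g=\exp(t^{1/q}\zeta)$ together with the cancellation of the two $t^{d/q}$ factors produce the uniform bound, exactly as the paper does (the paper organizes Step~1 at the level of the symbol $^1a^{\iota\tilde\iota}_{f_t}$ rather than seminorms of $f_t(1-\beta)$, but this is cosmetic). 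One small inaccuracy in Step~2: the vanishing of $b^{\iota}_{f_t}$ should be deduced from $k_1gk\cdot p\in\supp\alpha'_\iota$ (a \emph{compact} subset of the open set $\widetilde W_\iota'$), not merely from $k_1gk\cdot p\in\widetilde W_\iota'$ as you wrote -- continuity alone does not prevent a point near $\gd\widetilde W_\iota'$ from escaping under a small perturbation; one needs the positive distance $\dist(\supp\alpha'_\iota,\gd\widetilde W_\iota')>0$ and, to make the choice of $U$ uniform in $\iota$, the finiteness of the atlas coming from compactness of $M$.
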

\begin{proof}
 To determine the asymptotic behavior of $\tr   \pi(H^\sigma_{f_t})$ as $t \to 0$ by means of Proposition \ref{prop:1}, we first have to examine the $t$-dependence of the amplitude
$a_{ f_t}^{\iota\tilde \iota}(\phi_\iota(p),\xi;k_1,k)$ as $t \to 0$ for fixed $k,k_1 \in K$.
Let  $0 \leq \beta \leq1 $ be a test function on $G$ with support in a sufficiently small neighborhood $U=U^{-1}$ of the identity  that is identically 1 on a ball of radius $R>0$ around $e$, and consider for  $f \in \S(G)$ 
\begin{align*}
\, ^1 a_{ f}^{\iota\tilde \iota}(x,\xi;k_1,k_2)&=e^{-i\phi_{\tilde \iota}^{k_1k_2}(x)\cdot \xi} 
\alpha'_{\tilde \iota}(k_1 k_2 \cdot \phi_\iota^{-1}(x))  \int_{G}e^{i\phi_{\iota}^{k_1gk_2}(x)\cdot\xi} c_{\iota}(x,k_1gk_2)f(g)  \\ &\cdot (1- \beta)(g) \d g, \\
\, ^2 a_{ f}^{\iota\tilde \iota}(x,\xi;k_1,k_2)&=e^{-i\phi_{\tilde \iota}^{k_1k_2}(x)\cdot \xi} 
\alpha'_{\tilde \iota}(k_1 k_2 \cdot \phi_\iota^{-1}(x))  \int_{G}e^{i\phi_{\iota}^{k_1gk_2}(x)\cdot\xi} c_{\iota}(x,k_1gk_2)f(g)  \\ & \cdot  \beta(g) \d g.
\end{align*}
Similarly to \eqref{26}, one has  for arbitrary $N \in \N$  the equality 
\begin{equation}
\label{eq:2.2.2012}
 \psi^\iota_{\xi,x}(k_1gk_2)(1+|\xi|^2)^N=  \sum_{r=0}^{2N} \sum_{|\alpha| =r} b^N_\alpha(x,g,k_1,k_2) d L(X^\alpha) [\psi^\iota_{\xi,x}(k_1gk_2)],
\end{equation}
where the coefficients $b^N_\alpha(x,g,k_1,k_2) $  are at most of exponential growth in  $g$. With  \eqref{eq:9} and \eqref{eq:boundlang}  we obtain
\begin{align*}
|\, ^1 a_{ f_t}^{\iota\tilde \iota}(\phi_\iota(p),\xi;k_1,k)|\leq c (1+|\xi|^2)^{-N} e^{\omega t} t^{-(d+2N)/q} e^{-bR^{q/(q-1)} [t^{1-q} -1]} \int_G  e^{- b |g|^{q/(q-1)}} e^{\kappa |g|}\d g
\end{align*}
 for small $t>0$, and constants $b,c>0$, $\kappa , \omega\geq 0$. Consequently, $\, ^1 a_{ f_t}^{\iota\tilde \iota}(x,\xi;k_1,k)$ vanishes to all orders  as $t \to 0$, or $|\xi| \to \infty$, provided that $q\geq 2$, and with Proposition \ref{prop:1} we obtain the equality
\begin{gather*}
\tr  \pi(H_{f_t}^\sigma )= \tr  \pi(H_{ f_t\beta }^\sigma )+O(t^N)\\
= \frac{d_\sigma^2}{(2\pi)^n} \sum_{\iota, \tilde \iota} \int_K  \int_K \int _{T^\ast M} e^{i\Phi_{\iota\tilde \iota}(p,\xi,k_1,k)}  \alpha_\iota ( p)\alpha_{\tilde \iota} (k_1 k \cdot  p)  \overline{\chi_\sigma(k_1)\chi_\sigma(k)}   \\ \cdot  \, ^2a_{f_t}^{\iota\tilde \iota}(\phi_\iota(p),\xi;k_1,k) j_\iota(p)  \d(T^\ast M)(p,\xi)\d k \d k_1 + O(t^N)
\end{gather*}
for any $N \in \N$. Let  $\psi \in \CT(\rn,\R^+)$ be equal $1$ near the origin, and $\epsilon >0$. Repeating the arguments in the proof of Proposition \ref{prop:1} with $f$ replaced by $f_t \cdot \beta$ one obtains  for $\tr  \pi(H_{f_t\beta }^\sigma )$ the expression
\begin{gather*}
\lim _{\epsilon \to 0} d_\sigma^2 \sum_\iota   \int_{\widetilde W_\iota} \int  \int_U  \int_K \int_K  e^{i(\phi_{\iota}(k_1gk\cdot p)-\phi_\iota(p))\cdot\xi}  f_t( g) \beta(g) \alpha_\iota (p)    c_{\iota}(\phi_\iota(p),k_1gk)\overline{\chi_\sigma(k_1)\chi_\sigma(k)} \psi(\epsilon \xi) \\
\cdot  j_\iota(p)   \d k \d k_1 \d g \, \dbar \xi \d M(p)\\
=\lim _{\epsilon \to 0} d_\sigma^2 \sum_{\iota}  \int_U  \int_K \int_K  \int_{ \widetilde W_\iota} \int  e^{i(\phi_{\iota}(k_1 gk  \cdot p)- \phi_{ \iota}(k_1k \cdot p)) \cdot \xi} e^{i(\phi_{\iota}(k_1k \cdot p)-\phi_\iota( p))\cdot\xi}f_t(g)\beta(g)  \alpha_\iota ( p)     \\
\cdot   \alpha'_\iota (k_1 gk \cdot p)  \overline{\chi_\sigma(k_1)\chi_\sigma(k)}  \psi(\epsilon \xi)  j_\iota(p)  \, \dbar \xi \d M(p)\d k \d k_1 \d g.
\end{gather*}
Here we took into account that $U\subset G$ can be chosen so small that for all $k_1,k_2 \in K$, $g \in U$, and $\iota \in I$
\bqn 
k_1 g k_2 \cdot p \in \supp \alpha_\iota' \quad \Longrightarrow \quad k_1 k_2\cdot p \in \widetilde W_\iota',
\eqn 
since $I$ can be assumed to be finite due to the compactness of $M$. Consequently
\bqn 
b_{f}^{\iota}(x,\xi;k_1,k_2)= e^{-i\phi_{ \iota}^{k_1k_2}(x)\cdot \xi}   \int_{U}e^{i\phi_{\iota}^{k_1gk_2}(x)\cdot\xi} c_{\iota}(x,k_1gk_2)f(g)\beta(g) \d g, \qquad f \in \S(G),
\eqn
is well defined, and  $ b_{f_t}^{ \iota}(\phi_\iota(p),\xi;k_1,k)=0$ for $k_1 k \cdot p \not \in \widetilde W_\iota'$.
From the considerations in the proof of Theorem \ref{thm:2}, and \eqref{eq:2.2.2012} it follows that $b_{f}^{\iota}(x,\xi;k_1,k_2) \in S^{-\infty}(W_\iota\times \rn)$ for arbitrary $k_1,k_2 \in K$. 
Thus, we arrive at
\begin{align*}
\tr  \pi(H_{f_t\beta }^\sigma ) &=\lim _{\epsilon \to 0}d_\sigma^2 \sum_{\iota}    \int_K  \int_K  \int_{ \widetilde W_\iota} \int   e^{i(\phi_{ \iota}(k_1k \cdot p)-\phi_\iota( p))\cdot\xi} \alpha_\iota (p)    \overline{\chi_\sigma(k_1)\chi_\sigma(k)}  \psi(\epsilon \xi) \\
&\cdot  b_{f_t}^{\iota} (\phi_\iota( p),\xi;k_1,k)  j_\iota(p)  \, \dbar \xi \d M(p)\d k \d k_1.
\end{align*}
By passing to the limit under the integral, and performing  the substitution $\xi \to \xi/t^{1/q}$, one finally arrives at the desired result. 
To examine the $t$-dependence of the amplitude $b_{f_t}^{ \iota}(\phi_\iota(p),\xi/t^{1/q};k_1,k)$ as $t \to 0$,  introduce canonical coordinates on $U$ according to 
 \bq
 \label{eq:cc}
\Psi: \R^d \ni \zeta =( \zeta_1,\dots, \zeta_d) \longmapsto g=e^{\sum \zeta_i X_i} \in U.
 \eq
 By the analyticity of the $G$-action on $M$ we have the power expansion
\bqn
[\phi_{ \iota}^{k_1k}(x) -\phi_\iota^{k_1gk}(x) ] _j = \sum_{|\alpha|>0} c_\alpha^j(x,k_1,k) \zeta^\alpha, \qquad g \in U,x \in W_\iota,
\eqn
where the coefficients $c_\alpha^j(x,k_1,k)$ depend analytically on $x$, $k_1$,  and $k$. 
Performing the substitution $\zeta \mapsto t^{1/q} \zeta$, and taking into account the bounds \eqref{eq:boundlang}, one computes
\begin{align*}
|b_{f_t}^{ \iota}(x,\xi/t^{1/q};k_1,k)|&=t^{d/q}  \Big | \int_{t^{-1/q} \Psi^{-1}(U)}  e^{i \sum_{|\alpha|>0, j} c^j_\alpha(x,k_1,k) (t^{1/q} \zeta)^\alpha \xi_j/ t^{1/q}}  c_\iota(x, k_1 e^{t^{1/q} \sum \zeta_i X_i} k) \\ 
& \cdot  ( f_t \beta)(e^{t^{1/q} \sum \zeta_i X_i}) \Psi^\ast(d_G)(\zeta) \Big | \leq c' e^{\omega t} \int_{\R^d} e^{-b' |\zeta|^{q/(q-1)}}  \Psi^\ast(d_G)(\zeta)
\end{align*}
for some constants $b',c'>0$, and $\omega\geq 0$, where we took into account that there exists some constant $C>0$ such that $C^{-1} |\zeta| \leq |g| \leq C |\zeta|$.  A similar  examination of the derivatives finally yields for small $t>0$ the estimate
\bqn
|\gd_x^\alpha \gd ^\beta_\xi \gd _{k_1}^{\delta_1} \gd _k^\delta  [b_{ f_t}^\iota(x,\xi/t^{1/q};k_1,k)]|  \leq C
\eqn
for some constant $C>0$ independent of $0<t<1$, and arbitrary indices $\alpha, \beta, \delta_1, \delta$. 
\end{proof}
\begin{remark}
Note that since $b_{f_t}^{ \iota}$ is rapidly decaying in $\xi$, for any $N \in \N$ there exists a constant $c_N>0$ such that 
\bqn 
|b_{f_t}^{ \iota}(\phi_\iota(p),\xi/t^{1/q};k_1,k)|\leq \frac {c_N}{(1 + |\xi/t^{1/q}|^2)^N}=\frac{c_N t^{2N/q}}{( t^{2/q} + |\xi|^2)^N}\leq \frac{c_N t^{2N/q}}{ |\xi|^{2N}}.
\eqn
Therefore, if  $\theta \in \CT(\rn,[0,1])$ is a cut-off function such that $\theta(\xi)=1$ for $|\xi| \leq 1$, and $\theta(\xi)=0$ for $|\xi| \geq 2$, then
\bqn
 \int _K \int _K \int_{T^\ast M} |b_{f_t}^{ \iota}(\phi_\iota(p),\xi/t^{1/q};k_1,k)| (1-\theta(\xi)) d(T^\ast M)(p,\xi) \d k \d k_1 \leq C_N t^{2N/q}
\eqn
for any $N \in \N$, and suitable constants $C_N$.
\end{remark}
 Let us now regard the compact group 
$$
\K=K\times K
$$
with Haar measure $d_{\K}=d_K d_K$. Take  $\sigma \in \widehat{K}$, and $(\pi_\sigma,V_\sigma)\in \sigma$. Then $(\pi_\sigma\otimes \pi_\sigma, V_\sigma \otimes V_\sigma)$ is an unitary irreducible representation of $\K$ belonging to $\sigma \otimes \sigma \in \hat \K$ of dimension $d_{\sigma \otimes \sigma}=d_\sigma^2$, and the corresponding character is given  by 
\bqn 
(\chi_\sigma\otimes \chi_\sigma)(k_1,k)=\chi_\sigma(k_1) \otimes \chi_\sigma(k)=\chi_\sigma(k_1) \chi_\sigma(k), \qquad k_1,k \in K.
\eqn
In what follows, we shall also  write $(k_1,k) \cdot p=k_1 k\cdot p$ for the $\K$-action on $M$. Note that this action is still isometric, but no longer effective. 
\begin{corollary}
\label{cor:3}
Let  $\sigma \in \widehat{K}$,  and $\psi \in \CT(\rn,\R^+)$ be equal $1$ near the origin. Let further  $t, \epsilon >0$.   Then
 \begin{gather*}
\tr   \pi(H^\sigma_{f_t})=\lim_{\eps \to 0} \frac {d_{\sigma\otimes \sigma}} {(2\pi)^nt^{n/q}} \sum_{\iota} \int_{\K} \int _{T^\ast M} e^{i\Phi_{\iota\iota}(p,\xi,k_1,k)/t^{1/q}}  \alpha_\iota ( p)  \overline{(\chi_\sigma\otimes \chi_\sigma)(k_1,k)} \\ \cdot  \, b_{f_t}^{ \iota}(\phi_\iota(p),\xi/t^{1/q};k_1,k) \psi(\eps \xi) j_\iota(p)  \d(T^\ast M)(p,\xi) d_{\K}(k_1,k) +O(t^{\infty}).
\end{gather*}
\end{corollary}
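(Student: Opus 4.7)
The corollary is essentially a reformulation of Theorem \ref{thm:A} obtained by rewriting the $K\times K$-data as $\K$-data, and then recasting the (already absolutely convergent) oscillatory integral as a limit of compactly supported ones in $\xi$. Thus the plan is first to perform the purely algebraic rewriting, and then to justify the insertion of the cut-off $\psi(\epsilon\xi)$ by dominated convergence, drawing the needed majorant from the preceding Remark.

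For the algebraic step, I would observe that $d_{\sigma\otimes\sigma}=d_\sigma^2$, that the character of $\pi_\sigma\otimes\pi_\sigma$ factors as $(\chi_\sigma\otimes\chi_\sigma)(k_1,k)=\chi_\sigma(k_1)\chi_\sigma(k)$, and that the product Haar measure $dk\,dk_1$ on $K\times K$ is by definition $d_\K(k_1,k)$. Substituting these identifications into the formula for $\tr\pi(H^\sigma_{f_t\beta})$ in Theorem \ref{thm:A} and recalling the identity $\tr\pi(H^\sigma_{f_t})=\tr\pi(H^\sigma_{f_t\beta})+O(t^\infty)$ puts the statement into the claimed form, except for the presence of the cut-off $\psi(\epsilon\xi)$ inside the integral and the limit $\epsilon\to 0^+$ outside.

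To insert $\psi(\epsilon\xi)$ and pass to the limit, I would apply Lebesgue's dominated convergence theorem on the finite-measure base $T^\ast M\times \K$ (with $M$ compact and $\K$ compact, only the fiber direction $\xi\in\R^n$ is unbounded). The hypothesis $\psi(\epsilon\xi)\to 1$ pointwise as $\epsilon\to 0^+$ is immediate; the crucial point is an $\epsilon$- and $t$-independent integrable majorant. For this I would invoke the estimate from the Remark preceding the corollary, which gives
\[
\bigl|b_{f_t}^{\iota}(\phi_\iota(p),\xi/t^{1/q};k_1,k)\bigr|\leq \frac{c_N\, t^{2N/q}}{|\xi|^{2N}}
\]
for any $N\in\N$, together with the uniform bound $|b_{f_t}^{\iota}(\phi_\iota(p),\xi/t^{1/q};k_1,k)|\leq C$ from Theorem \ref{thm:A}. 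Combining the two (taking the minimum, or simply choosing $N$ large enough on $|\xi|\geq 1$ and the uniform bound on $|\xi|\leq 1$) yields, for each fixed $t>0$, an $L^1$ majorant on $T^\ast M\times\K$ independent of $\epsilon$. Since the remaining factors $\alpha_\iota(p)$, $(\chi_\sigma\otimes\chi_\sigma)(k_1,k)$, $j_\iota(p)$ and the oscillatory exponential are bounded, dominated convergence applies.

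The proof should then be almost a formality; I do not foresee a genuine obstacle beyond bookkeeping, since Theorem \ref{thm:A} has already done the analytic work (the non-trivial truncation by $\beta$ and the rapid decay of $b_{f_t}^\iota$ in $\xi$), and the Remark supplies exactly the integrable envelope needed for the $\epsilon\to 0$ passage. The only point to watch is ensuring the majorant is simultaneously integrable near $\xi=0$ (use the uniform bound) and at infinity (use the polynomial decay with $N$ large), which is handled by the standard split of $T^\ast M$ into $\{|\xi|\leq 1\}$ and $\{|\xi|\geq 1\}$.
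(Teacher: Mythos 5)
Your proposal is correct and follows essentially the same route as the paper: the paper's proof of Corollary \ref{cor:3} simply cites Theorem \ref{thm:A} and Lebesgue's theorem on bounded convergence, and you have fleshed out exactly those two ingredients (the algebraic rewriting of $d_\sigma^2$, $\chi_\sigma(k_1)\chi_\sigma(k)$, and $dk\,dk_1$ in terms of $\K$-data, then dominated convergence using the rapid decay of $b^\iota_{f_t}$ in $\xi$ to re-insert the cut-off $\psi(\epsilon\xi)$). One small superfluity: for a fixed $t>0$ only $\epsilon$-independence of the majorant is needed, so the splitting via the Remark's $t$-uniform bound, while valid, is more than required.
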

\begin{proof}
This is an immediate consequence of Theorem \ref{thm:A}, and Lebesgue's theorem on bounded convergence.
\end{proof}

\section{Singular equivariant asymptotics and resolution of singularities}
\label{sec:SEARS}

The considerations of the previous section showed that, in order to describe the traces $\tr   \pi(H^\sigma_{f_t})= \tr  (P_\sigma \circ \pi(f_t) \circ P_\sigma)$ as $t \to 0^+$, one has to study the asymptotic behavior of oscillatory integrals of the form 
\begin{align}
\label{eq:15}
I(\mu)
&=  \int_{\K}\int _{T^\ast \widetilde{W}}   e^{i  \Phi(p , \xi,k_1,k)/\mu }   a( k_1k \cdot p,  p , \xi,k_1,k) \d(T^\ast M)( p,  \xi) d_{\K}(k_1,k)   
\end{align}
as $ \mu \to 0^+$  by means of the  stationary phase principle, where $(\phi,\widetilde W)$ are local coordinates on $M$,   while $a \in \CT(\widetilde W \times T^\ast  \widetilde W\times \K)$ is an amplitude which might depend on $\mu$, and
\bq
\label{eq:phase}
\Phi(p, \xi, k_1,k) =(\phi((k_1,k)\cdot p) - \phi ( p)) \cdot  \xi.
\eq  
 Consider for this the cotangent bundle  $\pi:T^\ast M\rightarrow M$, as well as the tangent bundle $\tau: T(T^\ast M)\rightarrow T^\ast M$, and define on $T^\ast M$ the Liouville form 
\bqn 
\Theta(\mathfrak{X})=\tau(\mathfrak{X})[\pi_\ast(\mathfrak{X})], \qquad \mathfrak{X} \in T(T^\ast M).
\eqn
Regard $T^\ast M$ as a  symplectic manifold with symplectic form 
$
\omega= d\Theta
$, 
and define for any  element $X$ in the Lie algebra $\k\oplus\k$ of $\K$ the function
\bqn
J_X: T^\ast M \longrightarrow \R, \quad \eta \mapsto \Theta(\widetilde{X})(\eta),
\eqn
where $\widetilde X$ denotes the fundamental vector field on $T^\ast M$, respectively $M$,  generated by $X$.   $\K$ acts on $T^\ast M$ in a Hamiltonian way, and the corresponding symplectic momentum  map is  given by 
\bqn
\mathbb{J}:T^\ast M\to (\k\oplus \k)^\ast,  \quad \mathbb{J}(\eta)(X)=J_X(\eta).
\eqn
Let us next  compute the critical set of the phase function $\Phi$.  Clearly, $\gd_\xi \Phi(p,\xi,k_1,k)=0$ if, and only if $k_1k\cdot p=p$. Write  $\phi(p)=({x}_1,\dots, {x}_n)$,  $\eta=\sum \xi_i (d{x}_i)_p \in T_p^\ast \widetilde W$. Assuming that $k_1k\cdot p= p$, one  computes  for any $X \in \k\oplus \k$
\begin{align*}
\frac d{ds} \Big ( \phi (\e{-tX}  ( k_1, k)  \cdot p)  \cdot \xi \Big )_{|t=0}= \sum \xi_i \widetilde X_{p}({x}_i)   = \sum \xi_i (d{x}_i)_{ p}(\widetilde X_{ p})=\eta (\widetilde X_{p})=\Theta(\widetilde X)(\eta) = \mathbb{J}(\eta)(X),
\end{align*}
so that  $\gd_{(k_1,k)} \Phi(p,\xi,k_1,k)=0$ if, and only if $\mathbb{J}(\eta)=0$. 
A further computation shows that 
\bqn
\gd_x \Phi ( \phi ^{-1} (x), \xi, k_1,k) = [ \, ^T ( \phi \circ k_1k \circ \phi ^{-1} )_{\ast, x} - \1 ]  \xi= ( (k_1 k) ^\ast_x-\1) \cdot \xi, 
\eqn
so that $\gd _p \Phi ( p, \xi, k_1,k )=0$ amounts precisely to the condition $(k_1k)^\ast  \xi=\xi$. 
Collecting everything together one obtains  
 \begin{align}
 \label{eq:16}
 \begin{split}
 \Crit(\Phi)
&=\mklm{(p,\xi,k_1,k) \in T^\ast \widetilde W \times \K: (\Phi_{\ast})_{(p,\xi,k_1,k)}=0} \\
&= \mklm{( p  ,\xi, k_1,k) \in (\Xi  \cap T^\ast \widetilde W)\times \K:  \,  (k_1,k) \cdot (p,\xi)=(p,\xi)},
\end{split}
\end{align}
  where $
  \Xi=\mathbb{J}^{-1}(0)
$
denotes  the zero level of the momentum  map of $\K$. 
Now, the major difficulty resides in the fact that, unless the $\K$-action on $T^\ast M$ is free, the considered momentum map is not a submersion, so that $\Xi$ and  $\Crit(\Phi)$ are not  smooth manifolds. The stationary phase theorem can therefore not immediately be applied to the integrals  $I(\mu)$. Nevertheless, it was shown in \cite{ramacher10}   that by constructing a strong resolution of the set 
\bqn 
\Ncal=\mklm{(p, k_1,k) \in M \times \K: (k_1,k) \cdot p = p}
\eqn
a partial desingularization $\mathcal{Z}: \widetilde {\bf X} \rightarrow {\bf X}= T^\ast M \times \K$ of  the  set 
\bqn 
\mathcal{C}=\mklm{( p  ,\xi, k_1,k) \in \Xi \times \K:  \,  (k_1,k) \cdot (p,\xi)=(p,\xi)}
\eqn
can be achieved, and applying the stationary phase theorem in the  resolution space, an asymptotic description of $I(\mu)$ can be obtained. More precisely, the map $\mathcal{Z}$ yields 
a partial monomialization of  the local ideal $I_\Phi=(\Phi)$ generated by the phase function \eqref{eq:phase} according to  
\bqn 
\mathcal{Z}^\ast (I_\Phi) \cdot \E_{\tilde x, \widetilde {\bf X}} = \prod_j\sigma_j^{l_j}   \cdot\mathcal{Z}^{-1}_\ast(I_\Phi) \cdot \E_{\tilde x, \widetilde {\bf X}},
\eqn
 where $\E_{\widetilde{\bf X}}$ denotes the structure sheaf of rings of $\widetilde{\bf X}$,  $\sigma_j$ are local coordinate functions near each $\tilde x \in \widetilde {\bf X}$, and $l_j$ natural numbers. As a consequence, the phase function factorizes locally according to $\Phi \circ \mathcal{Z} \equiv \prod \sigma_j^{l_j} \cdot  \tilde \Phi^ {wk}$,
and one shows  that  the weak transforms $ \tilde \Phi^ {wk}$ have clean critical sets. 
Asymptotics for the integrals $I(\mu)$ are  then obtained by pulling  them back to the resolution space $\widetilde {\bf X}$, and applying  the stationary phase theorem to the $\tilde \Phi^{wk}$  with the variables  $\sigma_j$ as parameters. As a consequence, one obtains 
\begin{theorem} 
\label{thm:I(mu)}
Let $M$ be a connected, closed Riemannian manifold, and $K$  a compact, connected Lie group  acting isometrically  on $M$. For $\K=K\times K$, consider the oscillatory integral
\begin{align*}
I(\mu)
&=  \int_{\K} \int _{T^\ast \widetilde W}   e^{i  \Phi(p , \xi,k_1,k)/\mu }   a( (k_1,k) \cdot p,  p , \xi,k_1,k) \d(T^\ast M)( p,  \xi) d_{\K}(k_1,k), \qquad \mu >0,  
\end{align*}
where $(\phi,\widetilde W)$ are local coordinates on $M$, while $a \in \CT(\widetilde W \times T^\ast  \widetilde W\times \K)$ is an amplitude  which might depend on the parameter $\mu$, and
$
\Phi(p, \xi, k_1,k) =(\phi((k_1,k) \cdot  p) - \phi ( p)) \cdot  \xi
$.  
Furthermore, assume that   for all multi-indices one has  $ |\gd_x^\alpha \gd ^\beta_\xi \gd _{k_1}^{\delta_1} \gd _k^\delta a|  \leq C
$  with a constant $C>0$ independent of $\mu$. Then $I(\mu)$ has the asymptotic expansion 
\bqn 
I(\mu) = (2\pi \mu)^{\kappa} \mathcal{L}_0 + O\big (\mu^{\kappa+1}(\log \mu^{-1})^{\Lambda-1}\big ),  \qquad \mu \to 0^+,
\eqn
where  $\kappa$ is the dimension of a $\K$-orbit of principal type in $M$, $\Lambda$ the maximal number of elements of a totally ordered subset of the set of $\K$-isotropy types, and the leading coefficient is given by 
\bq
\label{eq:L0}
\mathcal{L}_0=\int_{\mathrm{Reg}\, \mathcal{C}} \frac { a( k_1k  \cdot p,  p , \xi,k_1,k) }{|\det   \, \Phi''(p, \xi,k_1,k)_{N_{(p, \xi, k_1,k)}\mathrm{Reg}\, \mathcal{C}}|^{1/2}} \d(\mathrm{Reg}\, \mathcal{C})(p,\xi,k_1,k),
\eq
where $\mathrm{Reg}\, \mathcal{C}$ denotes the regular part of  $\mathcal{C}$, and $\d(\mathrm{Reg}\, \mathcal{C})$ the induced volume density. 
In particular, the integral over $\mathrm{Reg}\, \mathcal{C}$ exists.
\end{theorem}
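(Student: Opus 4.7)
The direct obstruction to applying the stationary phase principle to $I(\mu)$ is that the critical set $\mathcal{C}$ computed in \eqref{eq:16} is not a submanifold: unless $\K$ acts freely on $T^\ast M$, the zero-level set $\Xi=\mathbb{J}^{-1}(0)$ of the momentum map is stratified rather than smooth, the stratification being controlled by the $\K$-isotropy types on $M$. The plan is to circumvent this by invoking the partial desingularization $\mathcal{Z}:\widetilde{\bf X}\to{\bf X}=T^\ast M\times\K$ of $\mathcal{C}$ constructed in \cite{ramacher10}. That construction proceeds by iterated blow-ups along the singular strata of $\Ncal=\{(p,k_1,k):(k_1,k)\cdot p=p\}$, lifted to $T^\ast M$, and produces a monomial factorization
\[
\Phi\circ\mathcal{Z}\equiv\prod_j\sigma_j^{l_j}\cdot\widetilde\Phi^{wk},
\]
where the $\sigma_j$ are local exceptional coordinates and the weak transform $\widetilde\Phi^{wk}$ has a clean critical set in the fibre over fixed $\sigma$, of codimension equal to the principal orbit dimension $\kappa$.

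First, I would pull the integral $I(\mu)$ back via $\mathcal{Z}$, write $\mathcal{Z}^\ast(d(T^\ast M)\,d_\K)=J_{\mathcal{Z}}\,d\widetilde x$ for the Jacobian density, and decompose via a finite partition of unity subordinate to the chart cover of $\widetilde{\bf X}$. In each chart, rescaling the oscillation parameter according to $\mu\mapsto\mu/\prod_j\sigma_j^{l_j}$, one applies the stationary phase theorem with parameters to $\widetilde\Phi^{wk}$ in the remaining variables, with the $\sigma_j$ kept fixed. The uniform amplitude bound $|\gd_x^\alpha \gd_\xi^\beta \gd_{k_1}^{\delta_1} \gd_k^\delta a|\leq C$ hypothesized in the statement is preserved under $\mathcal{Z}^\ast$ up to constants depending only on the geometry of the resolution, which makes the stationary phase estimate uniform in $\mu$.

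After stationary phase the integrand in the residual $\sigma$-variables is $\mu^\kappa\cdot\prod_j|\sigma_j|^{m_j}$ times a smooth function, and integrating over a neighbourhood of the exceptional divisor either yields a convergent integral (contributing a clean $\mu^\kappa$ term) or a logarithmically divergent one, in which case the cut-off at scale $\mu^{1/l_j}$ produces factors of $\log\mu^{-1}$. The maximal number of such factors is $\Lambda-1$, reflecting the length of the longest chain of $\K$-isotropy types and hence the maximal depth of nested blow-ups required to reach the principal stratum. Only the chart on which $\mathcal{Z}$ is a diffeomorphism onto its image, namely onto $\Reg\mathcal{C}$, contributes at the leading order $\mu^\kappa$; every other chart contributes either a strictly higher power of $\mu$ or is absorbed into the remainder $O(\mu^{\kappa+1}(\log\mu^{-1})^{\Lambda-1})$.

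To identify the leading coefficient $\mathcal{L}_0$, I would transport the principal chart contribution back to $\bf X$ via the diffeomorphism $\mathcal{Z}_{|\mathcal{Z}^{-1}(\Reg\mathcal{C})}$. The Hessian of $\Phi$ transverse to $\Reg\mathcal{C}$ is non-degenerate by the definition of the regular part and produces the determinantal prefactor $|\!\det\Phi''_{|N_{(p,\xi,k_1,k)}\Reg\mathcal{C}}|^{-1/2}$; integration of the resulting half-density over $\Reg\mathcal{C}$ yields precisely \eqref{eq:L0}, and its finiteness follows \emph{a posteriori} from the boundedness of $I(\mu)\mu^{-\kappa}$ as $\mu\to 0^+$. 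The main obstacle is of course the construction of $\mathcal{Z}$ itself and the verification of the monomialization, together with the careful orbit-by-orbit bookkeeping to show that sub-principal strata do not interfere with the leading term; in the present work this is avoided by citing \cite{ramacher10} directly, so that the proof reduces to checking that the phase \eqref{eq:phase} and the amplitude $a$ satisfy the hypotheses of the singular equivariant stationary phase theorem proved there, which is immediate from \eqref{eq:16} and the assumed uniform bounds on $a$.
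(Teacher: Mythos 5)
Your proposal is correct and takes essentially the same route as the paper, which simply invokes Theorem 11 of \cite{ramacher10}; your sketch of the desingularization, monomialization, and chart-by-chart stationary phase argument accurately reflects what that cited result does, and you correctly observe that the reduction to it requires only the critical-set computation \eqref{eq:16} and the uniform amplitude bounds.
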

\begin{proof}
See \cite{ramacher10}, Theorem 11.
\end{proof}

As a consequence, one obtains the following asymptotic description  as $t \to 0$ for $\tr   \pi(H^\sigma_{f_t})=\tr (P_\sigma \circ \pi(f_t)\circ P_\sigma)$. 

\begin{theorem}
\label{thm:B}
Let $\sigma \in \widehat{K}$, and $t>0$. Then 
\begin{align*}
\tr   \pi(H^\sigma_{f_t})&=\frac{d_{\sigma \otimes \sigma} }{(2\pi)^{n-\kappa}t^{(n-\kappa)/q}}\sum_\iota \int_{\Reg \mathcal{C}}\alpha_\iota ( p)    b_{f_t}^{ \iota}(\phi_\iota(p),\xi/t^{1/q};k_1,k)  \overline{(\chi_\sigma\otimes \chi_\sigma)(k_1,k)} j_\iota(p)\\ & \cdot \frac {d(\Reg \mathcal{C})(p,\xi,k_1,k)}{|\det   \, \Phi_{\iota \iota} ''(p, \xi,k_1,k)_{N_{(p, \xi, k_1,k)}\mathrm{Reg}\, \mathcal{C}}|^{1/2}}+ O(t^{-(n-\kappa-1)/q} (\log t)^{\Lambda -1}),
\end{align*}
where  $\kappa$ is the dimension of a $K$-orbit of principal type in $M$, and $\Lambda$ the maximal number of elements of a totally ordered subset of the set of $\K$-isotropy types. 
\end{theorem}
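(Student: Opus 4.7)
The plan is to derive Theorem \ref{thm:B} by combining the trace formula of Corollary \ref{cor:3} with the singular stationary phase asymptotics of Theorem \ref{thm:I(mu)}, applied with small parameter $\mu = t^{1/q}$. Starting from the expression for $\tr \pi(H^\sigma_{f_t})$ in Corollary \ref{cor:3}, I first pass to the limit $\varepsilon \to 0$ using the rapid decay of $b_{f_t}^\iota$ in the scaled cotangent variable together with Lebesgue's theorem on bounded convergence. Next I introduce a cutoff $\theta \in \CT(\rn,[0,1])$ with $\theta(\xi)=1$ for $|\xi|\le 1$ and $\theta(\xi)=0$ for $|\xi|\ge 2$, splitting each $\iota$-summand into a part carrying the factor $\theta(\xi)\, b_{f_t}^\iota$ (compactly supported in $\xi$) and a tail carrying $(1-\theta(\xi))\, b_{f_t}^\iota$. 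By the remark following Theorem \ref{thm:A}, the tail contributes $O(t^{2N/q})$ for every $N\in\N$, hence is $O(t^\infty)$ and can be absorbed into the error.

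With the remaining, compactly supported amplitude
\[
a(q,p,\xi,k_1,k):=\theta(\xi)\,\alpha_\iota(p)\,j_\iota(p)\,\overline{(\chi_\sigma\otimes\chi_\sigma)(k_1,k)}\,b_{f_t}^\iota(\phi_\iota(p),\xi/t^{1/q};k_1,k)
\]
and phase $\Phi_{\iota\iota}$ as in Corollary \ref{cor:3}, I apply Theorem \ref{thm:I(mu)} with $\mu=t^{1/q}$. The hypothesis that all derivatives of $a$ are uniformly bounded in $\mu$ is provided precisely by the last assertion of Theorem \ref{thm:A}. This yields a leading term $(2\pi t^{1/q})^\kappa \mathcal{L}_0^\iota$ together with remainder $O(t^{(\kappa+1)/q}(\log t^{-1})^{\Lambda-1})$. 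Multiplying by the prefactor $(2\pi)^{-n}t^{-n/q}$ of Corollary \ref{cor:3}, summing over $\iota$, and using $d_\sigma^2 = d_{\sigma\otimes\sigma}$, I recover the announced factor $d_{\sigma\otimes\sigma}/[(2\pi)^{n-\kappa}t^{(n-\kappa)/q}]$ and remainder $O(t^{-(n-\kappa-1)/q}(\log t)^{\Lambda-1})$.

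To finish, I remove the cutoff $\theta(\xi)$ from the leading coefficient: by the bound $|b_{f_t}^\iota(\phi_\iota(p),\xi/t^{1/q};k_1,k)|\le c_N\, t^{2N/q}\,|\xi|^{-2N}$, the integral of $(1-\theta(\xi))\,b_{f_t}^\iota$ against the induced density on $\Reg \mathcal{C}$ is $O(t^{2N/q})$ for any $N$ large enough that $|\xi|^{-2N}$ is integrable on the cotangent slices of $\Reg \mathcal{C}$, and after multiplication by the prefactor $(2\pi)^{-(n-\kappa)}t^{-(n-\kappa)/q}$ this correction is still $O(t^\infty)$, hence absorbed into the stated remainder. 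The main technical obstacle---that the critical set $\mathcal{C}$ of $\Phi_{\iota\iota}$ is not smooth wherever the $\K$-action on $T^\ast M$ fails to be free, so that ordinary stationary phase cannot be directly applied to the phase $\Phi_{\iota\iota}/t^{1/q}$---has already been dealt with in Theorem \ref{thm:I(mu)} via the partial desingularization of \cite{ramacher10}. The present argument is therefore essentially a bookkeeping reduction, the crucial observation being that the $t$-dependence of $b_{f_t}^\iota(\phi_\iota(p),\xi/t^{1/q};k_1,k)$ is harmless precisely because Theorem \ref{thm:I(mu)} permits $\mu$-dependent amplitudes with uniformly bounded derivatives, which is exactly what Theorem \ref{thm:A} provides.
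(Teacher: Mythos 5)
Your argument is correct and matches the paper's (very terse) proof, which simply cites Corollary~\ref{cor:3}, Theorem~\ref{thm:I(mu)}, and Lebesgue's theorem on bounded convergence. You have filled in exactly the right details — the cutoff to make the amplitude compactly supported, the $O(t^\infty)$ estimate for the tail from the remark after Theorem~\ref{thm:A}, and the uniform derivative bounds from Theorem~\ref{thm:A} that make Theorem~\ref{thm:I(mu)} applicable with $\mu=t^{1/q}$ despite the $t$-dependence of $b_{f_t}^\iota$.
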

\begin{proof}
This  is an immediate consequence of Corollary \ref{cor:3}, and Theorem \ref{thm:I(mu)}, together with Lebesgue's theorem on bounded convergence.
\end{proof}
In general, it is not possible to obtain more explicit expressions for the leading term, unless one has more knowledge about the Langlands kernels $f_t$ as $t \to 0$. In particular, the bounds \eqref{eq:boundlang} are not sufficient for this purpose. We shall therefore make the following assumption, which should  hold in many cases.

\medskip

\noindent
{\bf Assumption 1.} The function  $f_t$ has  an asymptotic expansion of the form
\bqn 
f_t(g)\sim \frac{1}{ t^{d/q}} e^{-b\big (\frac{|g|^q}t\big )^{1/(q-1)}} \sum_{j=0}^\infty c_j(g) t^j, \qquad  |g| <<1,
\eqn
where $b>0$, and the coefficients $c_j(g)$ are analytic in $g$. 

\medskip

We then have the following
\begin{corollary} 
\label{cor:B}
Let Assumption 1 be fulfilled. Then 
\begin{align*}
\tr   \pi(H^\sigma_{f_t})&=\frac{d_{\sigma\otimes \sigma} }{(2\pi)^{n-\kappa}t^{(n-\kappa)/q}} [(\pi_\sigma\otimes \pi_\sigma)_{|\mathbb{H}}:\1] \sum_\iota   \int_{{\mathrm{Reg}} \, \Xi}\hat \F_\iota (p, \xi) \alpha_\iota(p)  j_\iota(p)  \frac {  d(\Reg \Xi)(p,\xi)}{\vol \mathcal{O}_{(p,\xi)}} \\&+ O(t^{-(n-\kappa-1)/q} (\log t)^{\Lambda -1}),
\end{align*}
where $\hat \F_\iota (p, \xi)=  c_0(e)\int_{\R^d} e^{i \sum_{l,j} c^j_l(p) \zeta_l \xi_j}  e^{- b|e^{ \sum \zeta_i X_i}|^{q/(q-1)}}  \Psi^\ast(d_G)(\zeta)$ is rapidly falling in $\xi$, and $\mathcal{O}_{(p,\xi)}$ denotes the $\K$-orbit in $T^\ast M$ through $(p,\xi)$, while $[(\pi_\sigma\otimes \pi_\sigma)_{|\mathbb{H}}:\1]$ is the multiplicity of the trivial representation in the restriction  of the unitary irreducible representation $\pi_\sigma\otimes \pi_\sigma$ to a principal isotopy group $\mathbb{H}\subset \K$. Actually,
\bqn 
\widetilde{\vol}(\Xi/\K)=\sum_\iota \int_{{\mathrm{Reg}} \, \Xi}\hat \F_\iota (p, \xi) \alpha_\iota(p)  j_\iota(p)  \frac {  d(\Reg \Xi)(p,\xi)}{\vol \mathcal{O}_{(p,\xi)}}
\eqn
represents a Gaussian volume of the symplectic quotient $\Xi/\K$.

\end{corollary}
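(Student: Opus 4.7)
The plan is to insert the asymptotic expansion of $f_t$ from Assumption 1 into the integral representation of Theorem \ref{thm:B}, compute the pointwise limit of the amplitude $b_{f_t}^{\iota}(\phi_\iota(p),\xi/t^{1/q};k_1,k)$ on the critical set $\Reg \mathcal{C}$ as $t\to 0^+$, and then decompose the integral over $\Reg \mathcal{C}$ along its $\K$-isotropy fibration over $\Reg \Xi$. Concretely, I would introduce the canonical coordinates $\Psi:\zeta\mapsto e^{\sum \zeta_i X_i}$ on $G$ near the identity and perform the substitution $\zeta\mapsto t^{1/q}\zeta$ already used in the proof of Theorem \ref{thm:A}. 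This absorbs the prefactor $t^{-d/q}$ from Assumption 1, while $(|g|^q/t)^{1/(q-1)}$ converges to $|e^{\sum\zeta_iX_i}|^{q/(q-1)}$. Only $c_0$ contributes to leading order; all $c_j$ with $j\ge 1$ contribute at strictly lower powers of $t$. Analytically expanding the phase $\phi_\iota(k_1 g k\cdot p)-\phi_\iota(k_1 k\cdot p)$ in $\zeta$ and dividing by $t^{1/q}$, only the linear part $\sum_{l,j} c_l^j(\phi_\iota(p),k_1,k)\zeta_l \xi_j$ survives, the remainder being $O(t^{1/q})$.

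Next, I would restrict to $\Reg \mathcal{C}$, where $(k_1,k)\cdot p=p$. Writing $k_1 g k=(\Ad(k_1)g)(k_1 k)$ and using $(k_1 k)\cdot p=p$, the relevant phase reduces to $\phi_\iota(\Ad(k_1)(e^{\sum \zeta_i X_i})\cdot p)-\phi_\iota(p)$, so that $c_l^j(\phi_\iota(p),k_1,k)=\sum_i \Ad(k_1)_{il}\, c_i^j(p)$. The change of variables $\zeta\mapsto \Ad(k_1^{-1})\zeta$ in the resulting Gaussian integral erases the $k_1$-dependence: by unimodularity of $G$ together with Proposition \ref{prop:Ad(k)inv}, both $\Psi^\ast(d_G)$ and $|e^{\sum \zeta_i X_i}|$ are $\Ad(K)$-invariant. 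Moreover $c_\iota(\phi_\iota(p),k_1 k)=\alpha'_\iota(k_1 k\cdot p)=\alpha'_\iota(p)=1$ on $\supp \alpha_\iota$. Hence the pointwise limit on $\Reg\mathcal{C}$ equals $\hat\F_\iota(p,\xi)$, and dominated convergence, using the uniform bound on $b_{f_t}^{\iota}$ from Theorem \ref{thm:A} and its rapid decay in $\xi$, justifies passing to the limit inside the integral.

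The final step is the geometric decomposition of $\int_{\Reg\mathcal{C}}$ via the projection $(p,\xi,k_1,k)\mapsto(p,\xi)$, whose fibres are the isotropy groups $\K_{(p,\xi)}$, each conjugate to the principal isotropy $\mathbb{H}\subset \K$. Since $\hat\F_\iota$, $\alpha_\iota$ and $j_\iota$ depend only on $(p,\xi)$, the integral splits into a $(p,\xi)$-integral over $\Reg \Xi$ and a fibre integral of $\overline{(\chi_\sigma\otimes\chi_\sigma)(k_1,k)}/|\det\Phi''_{N\Reg\mathcal{C}}|^{1/2}$ over $\K_{(p,\xi)}$. Schur orthogonality applied to the conjugation-invariant character $(\chi_\sigma\otimes\chi_\sigma)|_\mathbb{H}$ produces the multiplicity $[(\pi_\sigma\otimes\pi_\sigma)_{|\mathbb{H}}:\1]$. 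The Jacobian from the splitting of $d(\Reg\mathcal{C})$, combined with $|\det\Phi''_{N\Reg\mathcal{C}}|^{1/2}$, is to be identified with $\vol\mathcal{O}_{(p,\xi)}$ by a symplectic linear-algebra computation, since the transverse Hessian of $\Phi$ at a critical point is the Gram matrix (paired with $\xi$) of the infinitesimal $\K$-generators on $T^\ast M$. Collecting everything yields the stated formula, with remainder inherited from Theorem \ref{thm:B}.

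The main obstacle will be the clean identification of $|\det \Phi''_{N\Reg \mathcal{C}}|^{-1/2}$ together with the fibre Jacobian as $\vol \mathcal{O}_{(p,\xi)}^{-1}$, which requires careful bookkeeping of the base/fibre decomposition of $d(\Reg\mathcal{C})$ compatibly with the symplectic form $\omega=d\Theta$ on $T^\ast M$ restricted to $\Xi$. A secondary subtlety is to verify that the higher-order terms in the $\zeta$-expansion of the phase contribute genuinely smaller powers of $t^{1/q}$ after stationary phase and can be absorbed into the remainder, rather than interfering with the leading coefficient. Once these two points are settled, the rest of the argument is a direct assembly of Theorems \ref{thm:A}, \ref{thm:I(mu)} and \ref{thm:B} together with the limit of Assumption 1.
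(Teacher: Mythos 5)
Your plan coincides with the paper's proof of Corollary~\ref{cor:B}: restrict to $\Reg\mathcal{C}$ where $k_1k\cdot p=p$, pass to canonical coordinates via $\Psi$, substitute $\zeta\mapsto t^{1/q}\zeta$ so that Assumption~1 yields the pointwise limit $\hat\F_\iota(p,\xi)$, and use Proposition~\ref{prop:Ad(k)inv} together with conjugation-invariance of Haar measure to erase the $k_1$-dependence. The paper implements this last step by first substituting $g\mapsto k_1^{-1}gk_1$ in the $g$-integral (using $k_1gk\cdot p=k_1gk_1^{-1}\cdot p$ on $\Reg\mathcal{C}$ and that $U=U^{-1}$ can be chosen $\Ad(K)$-invariant) rather than by a $\zeta$-space change of variables $\zeta\mapsto\Ad(k_1^{-1})\zeta$ as you propose, but the two are trivially equivalent. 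The conclusion that only $c_0(e)$ survives at leading order, with higher $\zeta$-terms and higher $c_j$'s contributing at $O(t^{1/q})$, also matches the paper's equation~\eqref{eq:1.2.2012}.

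The one step you defer --- identifying the fibre Jacobian together with $|\det\Phi''_{N\Reg\mathcal{C}}|^{-1/2}$ as $(\vol\mathcal{O}_{(p,\xi)})^{-1}$ and obtaining the factorization of $\int_{\Reg\mathcal{C}}$ into an $\mathbb{H}$-integral times a $\Reg\Xi$-integral --- is precisely the content of equation~\eqref{eq:4.2.2012}, which the paper does not re-derive but cites from Lemma~7 of \cite{cassanas-ramacher09}. You correctly flag this as the nontrivial point; your plan to re-derive it by a symplectic linear-algebra argument is consistent with that lemma, but as it stands your proposal leaves that computation unexecuted. Modulo filling that hole (or simply invoking the cited lemma), the remainder of your argument --- dominated convergence from the uniform bounds in Theorem~\ref{thm:A}, and $\int_{\mathbb{H}}\overline{(\chi_\sigma\otimes\chi_\sigma)}\,dk_1\,dk=[(\pi_\sigma\otimes\pi_\sigma)|_{\mathbb{H}}:\1]$ --- follows the paper exactly.
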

\begin{proof}
On $\Reg \mathcal{C}$ we have $k_1 k \cdot p =p$, so that 
\begin{align*}
b_{f_t}^{\iota}(\phi_\iota(p),\xi;k_1,k)&=e^{-i\phi_{ \iota}(p )\cdot \xi}   \int_{U}e^{i\phi_{\iota}(k_1gk_1^{-1}  \cdot p)\cdot\xi} \alpha'_{\iota}(k_1gk_1^{-1} \cdot p)f_t(g)\beta(g) \d g \\
&=   \int_{U}e^{i[\phi_{\iota}(g  \cdot p)-\phi_{ \iota}(p )] \cdot\xi} \alpha'_{\iota}(g \cdot p) (f_t\beta)(k_1^{-1}gk_1) \d g,
\end{align*}
since we can assume that $U$ is invariant under conjugation with $K$. Consider further, with respect to the coordinates \eqref{eq:cc},  the expansion
\bqn
[\phi_{\iota}(g  \cdot p)-\phi_{ \iota}(p )]_j = \sum_{|\alpha|>0} c_\alpha^j(p) \zeta^\alpha, \qquad g \in U, \, p  \in \widetilde W_\iota,
\eqn
where the coefficients $c_\alpha^j(p)$ depend analytically on $p$. Under Assumption 1, Taylor expansion in $\tau=t^{1/q}$ at $\tau=0$ gives
\begin{align}
\begin{split}
\label{eq:1.2.2012}
b_{f_t}^{ \iota}& (\phi_\iota(p),\xi/t^{1/q};k_1,k)=   \int_{t^{-1/q} \Psi^{-1}(U)}  e^{i \sum_{|\alpha|>0, j} c^j_\alpha(p) (t^{1/q} \zeta)^\alpha \xi_j/ t^{1/q}}  \alpha'_\iota(  e^{t^{1/q} \sum \zeta_i X_i} \cdot p) \\ 
& \cdot  t^{d/q}(f_t\beta)(k_1^{-1}e^{t^{1/q} \sum \zeta_i X_i}k_1) \Psi^\ast(d_G)(\zeta) \\
&=   \int_{t^{-1/q} \Psi^{-1}(U)}  e^{i \sum_{|\alpha|>0, j} c^j_\alpha(p) (t^{1/q} \zeta)^\alpha \xi_j/ t^{1/q}}  \alpha'_\iota(  e^{t^{1/q} \sum \zeta_i X_i} \cdot p)  c_0\Big (e^{t^{1/q}\sum \zeta_i \Ad(k_1^{-1})X_i}\Big)\\ 
& \cdot  e^{- b(|e^{t^{1/q}\sum \zeta_i X_i}|^q/t)^{1/(q-1)}} \beta(k_1^{-1}e^{t^{1/q} \sum \zeta_i X_i}k_1) \Psi^\ast(d_G)(\zeta) +O(t) \\
&= \alpha_\iota'(p) c_0(e) \int_{\R^d} e^{i \sum_{l,j} c^j_l(p) \zeta_l \xi_j} e^{- b|e^{\sum \zeta_i X_i}|^{q/(q-1)}} \Psi^\ast(d_G)(\zeta) + O(t^{1/q}),
\end{split}
\end{align}
where the notation is the same as in the proof of Theorem \ref{thm:A}. Here we took into account that  by Proposition \ref{prop:Ad(k)inv} we have  $|g|=|kgk^{-1}|$ for all $g \in G$ and $k\in K$.
Furthermore, $|e^{t^{1/q} \sum \zeta_i X_i}|^q/t=|e^{\sum \zeta_i X_i}|$.
Let us now remark that for any  smooth, compactly supported function $u$ on $\Xi  \cap T^\ast \widetilde W_\iota$, and any $v \in \Cinft(\mathbb{K})$, one has the formula
\bq
\label{eq:4.2.2012}
\int_{{\mathrm{Reg}} \, {\mathcal{C}}}\frac{v(k_1,k) u(p,\xi)d({\mathrm{Reg}}  \, {\mathcal{C}})(p, \xi,k_1,k)}{|\det  \, \Phi_{\iota \iota}'' (p,\xi,k_1,k)_{|N_{(p, \xi,k_1,k)}{\mathrm{Reg}} \, {\mathcal{C}}} |^{1/2}} 
=  \int_{\mathbb{H}}  v(k_1,k) \d k_1 \d k  \cdot \int_{{\mathrm{Reg}} \, \Xi} u (p, \xi) \frac{d({\mathrm{Reg}}\, \Xi)(p, \xi)}{\vol \mathcal{O}_{(p,\xi)}},
\eq
 compare \cite{cassanas-ramacher09}, Lemma 7, where $\mathbb{H}$ is a principal $\K$-isotropy group, and  $\mathcal{O}_{(p,\xi)}$  the $\K$-orbit in $T^\ast M$ through $(p,\xi)$. In particular, 
 \bqn 
 \int_{\mathbb{H}} \overline {(\chi_\sigma\otimes \chi_\sigma)(k_1,k)} dk_1dk= [(\pi_\sigma\otimes \pi_\sigma)_{|\mathbb{H}}:\1],
 \eqn 
 where $[(\pi_\sigma\otimes \pi_\sigma)_{|\mathbb{H}}:\1]$ denotes the multiplicity of the trivial representation in the restriction  to $\mathbb{H}$ of the unitary irreducible representation $\pi_\sigma\otimes \pi_\sigma$. 
 The assertion now follows with Theorem \ref{thm:B}.
\end{proof}

 To motivate Assumption 1, and to  illustrate our results, let us  consider the classical heat kernel on $G$. Thus, consider a Cartan decomposition of $\g$ as in \eqref{eq:cartan}, and  let $X_1, \dots X_p$ be an orthonormal basis of $\p$, and $Y_1, \dots, Y_l$ an orthonormal basis for $\k$ with respect to $\langle \cdot ,\cdot \rangle _\theta$. If $\Omega$ and $\Omega_K$ denote the Casimir elements of $G$ and $K$, one has
 \bqn 
 \Omega= \sum _{i=1}^p X_i^2- \sum _{i=1}^l Y_i^2, \qquad \Omega_K=-\sum _{i=1}^l Y_i^2.
 \eqn
Let
\bqn 
P=-\Omega+2\Omega_K=-\sum _{i=1}^p X_i^2- \sum _{i=1}^l Y_i^2.
\eqn
Then $dR(P)$ is the Beltrami-Laplace operator $\Delta_G$ on $G$ with respect to the left invariant metric. $dR(P)$ is a strongly elliptic operator associated to $R$, and generates a strongly continuous semigroup which coincides with  the classical heat semigroup $e^{-t \Delta_G}$, whose kernel $p_t$ is given by the corresponding universal Langlands kernel. In particular, 
\bq
\label{eq:22}
e^{-t \Delta_G}= R(p_t),
\eq
see \cite{mueller98}, Section 3. Let us now recall that on Riemannian manifolds admitting a properly discontinuous group of isometries with compact quotient, a fundamental solution  of the heat equation with Gaussian bounds can be constructed explicitly \cite{donnelly79}. Furthermore, every real, semisimple Lie group possesses  a discrete, torsion-free subgroup  with compact quotient \cite{borel63}. If therefore $H(t,g,h)$ is the fundamental solution of the heat equation $\gd/\gd t+\Delta_G$ on $G$ constructed in this way, the Gaussian bounds imply that it coincides with the Langlands kernel $p_t$, so that $H(t,g,h)=p_t(g^{-1} h)$. Furthermore, one has an asymptotic expansion of the form
\bqn 
H(t,g,h) \sim (4\pi t)^{-d/2} e^{-\frac {d^2(g,h)}{4t}} \sum_{j=0}^\infty t^j u_j(g,h),
\eqn
valid in a sufficiently small neighborhood of the diagonal in $G \times G$, see  \cite{donnelly79},  Theorem 3.3. As before,  $d(g,h)$ denotes the geodesic distance between two points with respect to the left invariant metric on $G$, and $u_0(g,g)=1$. Corollary \ref{cor:B} then implies
\begin{corollary} 
\label{cor:5}
Let $\Delta_G$ be the Laplace-Beltrami operator on $G$, and $p_t\in \S(G)$ its heat kernel. Then 
\begin{align*}
\tr   \pi(H^\sigma_{p_t})&=\frac{d_{\sigma\otimes \sigma}}{(2\pi)^{n-\kappa}t^{(n-\kappa)/2}} [(\pi_\sigma\otimes \pi_\sigma)_{|\mathbb{H}}:\1]\widetilde {\vol}(\Xi/\K)  + O(t^{-(n-\kappa-1)/2} (\log t)^{\Lambda -1}),
\end{align*}
where 
\bqn 
\widetilde {\vol}(\Xi/\K) =\sum_\iota \int_{{\mathrm{Reg}} \, \Xi}\hat \F_\iota (p, \xi) \alpha_\iota(p)  j_\iota(p)  \frac {  d(\Reg \Xi)(p,\xi)}{\vol \mathcal{O}_{(p,\xi)}},
\eqn
and 
$\hat \F_\iota (p, \xi)=   (4\pi)^{-d/2} \int_{\R^d} e^{i \sum_{l,j} c^j_l(p) \zeta_l \xi_j}  e^{- |e^{ \sum \zeta_i X_i}|^{2}/4}  \Psi^\ast(d_G)(\zeta)$. 
\end{corollary}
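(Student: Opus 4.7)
The strategy is straightforward: verify that the classical heat kernel $p_t$ on $G$ fits the framework of Assumption 1, and then invoke Corollary \ref{cor:B} with the appropriate identification of constants.

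First, I would identify the strongly elliptic operator and its order. The operator $dR(P)= \Delta_G$ is associated to the right regular representation $R$, is of order $q=2$, and by \eqref{eq:22} its semigroup has Langlands kernel $p_t$. Since $G$ admits a discrete, torsion-free, co-compact subgroup by Borel's theorem, the Donnelly construction \cite{donnelly79} produces a fundamental solution $H(t,g,h)$ of $\partial_t + \Delta_G$ on $G$ with Gaussian upper bounds; by uniqueness of such bounded fundamental solutions, $H(t,g,h) = p_t(g^{-1}h)$. Donnelly's Theorem 3.3 then provides an asymptotic expansion
\bqn
p_t(g) \sim (4\pi t)^{-d/2}\, e^{-|g|^2/(4t)} \sum_{j=0}^\infty t^j\, u_j(g,e), \qquad |g| \ll 1,
\eqn
valid near the identity, with $u_0(e,e)=1$.

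Second, I would read off that this is exactly the form required by Assumption 1, with $q=2$, $b=1/4$, and coefficients $c_j(g) = u_j(g,e)$. Matching the exponent, $\bigl(|g|^q/t\bigr)^{1/(q-1)} = |g|^2/t$, so the factor $e^{-b(|g|^q/t)^{1/(q-1)}}$ becomes $e^{-|g|^2/(4t)}$, and the prefactor $t^{-d/q}=t^{-d/2}$ matches $(4\pi t)^{-d/2}$ after absorbing the constant $(4\pi)^{-d/2}$ into $c_0$. In particular, $c_0(e) = (4\pi)^{-d/2}$.

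Third, I would apply Corollary \ref{cor:B} directly with these values. The general formula gives
\bqn
\tr \pi(H^\sigma_{p_t}) = \frac{d_{\sigma \otimes \sigma}}{(2\pi)^{n-\kappa} t^{(n-\kappa)/2}} [(\pi_\sigma \otimes \pi_\sigma)_{|\mathbb H} : \1] \sum_\iota \int_{\Reg \Xi} \hat \F_\iota(p,\xi)\, \alpha_\iota(p)\, j_\iota(p) \frac{d(\Reg \Xi)(p,\xi)}{\vol \mathcal O_{(p,\xi)}} + O\bigl(t^{-(n-\kappa-1)/2}(\log t)^{\Lambda-1}\bigr),
\eqn
where, substituting $c_0(e) = (4\pi)^{-d/2}$ and $b=1/4$,
\bqn
\hat \F_\iota(p,\xi) = (4\pi)^{-d/2} \int_{\R^d} e^{i \sum_{l,j} c^j_l(p)\zeta_l \xi_j}\, e^{-|e^{\sum \zeta_i X_i}|^2/4}\, \Psi^\ast(d_G)(\zeta).
\eqn
Collecting the sum over $\iota$ into $\widetilde{\vol}(\Xi/\K)$ as defined in the statement yields the asserted expansion.

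The only non-routine point is the identification $H(t,g,h) = p_t(g^{-1}h)$, which rests on the uniqueness of the heat kernel among fundamental solutions with Gaussian bounds, together with the fact that both sides are constructed from left-invariant data; once this is granted, the corollary follows by plugging Donnelly's expansion into Corollary \ref{cor:B}.
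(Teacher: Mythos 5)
Your proposal is correct and takes essentially the same route as the paper: identify $\Delta_G = dR(P)$ as a strongly elliptic operator of order $q=2$ whose Langlands kernel is $p_t$, invoke Borel and Donnelly to obtain the near-diagonal Gaussian expansion verifying Assumption~1 with $q=2$, $b=1/4$, $c_0(e)=(4\pi)^{-d/2}$, and substitute into Corollary~\ref{cor:B}. The momentary slip $c_j(g)=u_j(g,e)$, before you absorb the factor $(4\pi)^{-d/2}$ into the coefficients, is self-corrected and harmless, since only the value $c_0(e)=(4\pi)^{-d/2}$ enters the final formula.
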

\qed

\section{Homogeneous vector bundles on compact locally symmetric spaces}

In this section, we apply the previous analysis  to heat traces of Bochner-Laplace operators on compact, locally symmetric spaces. In the rank one case, this problem was already considered by  Miatello  \cite{miatello80} and DeGeorge and Wallach \cite{degeorge-wallach79}. As before, let $G$ denote a connected, real, semisimple Lie group with finite center, and $\Gamma$  a discrete, uniform subroup of $G$. Consider $M=\Gamma \backslash G$, and denote by
$
 \pi_\Gamma(g)\phi(h) = \phi(hg)$, $g,\, h \in G
$, 
 the right regular representation  \footnote{More precisely, 
 $ \pi_\Gamma(g)\phi(\Gamma h) = \phi(\Gamma hg)$, where $\Gamma h \in \gmg$, $g \in G$.} of $G$ in 
 the space $L^2(\Gamma \backslash G)$ of square integrable functions on $\Gamma \backslash G$.  Since $\gmg$ is compact,  the right regular representation decomposes discretely according to 
\bq
\label{eq:21}
\pi_\Gamma \simeq \bigoplus_{\rho \in \widehat{G}} m_\rho \pi_\rho,
\eq
where $\widehat{G}$ stands for the set of equivalence classes of irreducible unitary representations of $G$, $(\pi_\rho, H_\rho) \in  \rho$,  and $m_\rho <\infty$ denotes the multiplicity of $\rho$ in $(\pi_\Gamma, \L^2(\Gamma \backslash G))$. For $f \in C^\infty_c(G)$, the Bochner integral $\pi_\Gamma(f) = \int_G f(g) \pi_\Gamma (g)\d g$ defines a bounded  operator  on $\ltwo$ whose kernel is given by the   $C^\infty$ function
\bq
\label{eq:14}
k_f(g,h)= \sum_{\gamma \in \Gamma} f(g^{-1}\gamma h), \qquad g,h \in G,
\eq
the series converging uniformly on compacta. The regularity of the kernel implies that $\pi_\Gamma(f)$ is of trace class, and 
\[
\tr \pi_\Gamma(f) = \int_{\gmg}k_f(g,g) \d g = \int_{\gmg} \sum_{\gamma \in \Gamma} f(g^{-1}\gamma g) \d g.
\]
Note that we are slightly abusing of notation, and  denoting the invariant measure on $\Gamma \backslash G$ also by $dg$. 
If  $f \in L^1(G)$,  the operator $\pi_\Gamma(f)$ is still defined, but  might not be of trace class. If $f \in \S(G)$ is rapidly falling, it was shown in Theorem \ref{thm:2} that $\pi_\Gamma(f)$ is a smooth operator, which by Corollary \ref{cor:1} implies that it  has a well-defined  trace. As in the case of a compactly supported $f$, one can  show that for $f \in \S(G)$   the kernel of $\pi_\Gamma(f)$ is given globally by  the expression \eqref{eq:14}, and that it satisfies Selberg's trace formula. Indeed, one has the following

\begin{lemma}\label{prop01}
Let $f \in \S(G)$ be a rapidly decaying function on $G$.  Then the series $k_f(g,h)=\sum_{\gamma \in \Gamma}f (h^{-1}\gamma g)$ converges uniformly on compacta to a $C^\infty$ function, and represents the integral kernel of the bounded operator $\pi_\Gamma(f):\L^2(\gmg) \rightarrow \L^2(\gmg)$.
\end{lemma}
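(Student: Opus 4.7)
The plan is to control the series $k_f(g,h)=\sum_{\gamma \in \Gamma} f(h^{-1}\gamma g)$ by pitting the rapid decay of $f$ (Definition \ref{def:1}) against the at-most-exponential growth of the lattice-point counting function for the uniform subgroup $\Gamma \subset G$. Once absolute, uniform convergence on compacta is established for $f$ and all its derivatives $dL(X^\alpha)f$, smoothness of $k_f$ follows by differentiation under the summation sign, and the kernel identity falls out of an unfolding computation justified by Fubini.

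First I would establish the lattice-point estimate. Choose a relatively compact fundamental domain $F \subset G$ for $\Gamma$, and set $D=\mathrm{diam}\,F$. Then the sets $\gamma F$, $\gamma \in \Gamma$, partition $G$ up to null sets, and for $|\gamma|\le R$ one has $\gamma F \subset B_{R+D}(e)$. Since the volume of balls in $G$ grows at most exponentially, say $\vol(B_R(e)) \le C_0 e^{\beta R}$ for some $\beta>0$, this forces
\begin{equation*}
N(R):=\#\mklm{\gamma \in \Gamma : |\gamma|\le R} \le C_1 e^{\beta R}, \qquad R\ge 0.
\end{equation*}
Next, fix a compact set $\O \subset G \times G$, put $C_\O:=\sup_{(g,h)\in \O}(|g|+|h|)$, and note that by the left-invariance of $d$ and the triangle inequality, $|h^{-1}\gamma g|=d(\gamma g,h) \ge |\gamma|-|g|-|h|\ge |\gamma|-C_\O$. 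By Definition \ref{def:1}, for any $\kappa>0$ there is $C_\kappa>0$ with $|f(g)|\le C_\kappa e^{-\kappa|g|}$. Choosing $\kappa>\beta$ and using Abel summation with the counting bound above yields
\begin{equation*}
\sum_{\gamma \in \Gamma} |f(h^{-1}\gamma g)| \le C_\kappa e^{\kappa C_\O} \sum_{\gamma \in \Gamma} e^{-\kappa|\gamma|} \le C_\kappa' e^{\kappa C_\O}\int_0^\infty \kappa e^{-(\kappa-\beta)R}\d R < \infty,
\end{equation*}
with a bound independent of $(g,h)\in \O$. Hence $k_f$ converges absolutely and uniformly on compacta to a continuous function on $G\times G$, and it is left $\Gamma$-invariant in $h$ and right $\Gamma$-invariant in $g^{-1}$ by construction, so descends to a function on $\gmg\times\gmg$.

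The smoothness follows by the same argument applied to $dL(X^\alpha)f$ for each multi-index: since $dL(X^\alpha)f$ also satisfies the decay bound in Definition \ref{def:1}, term-by-term differentiation of the series is legitimate, and $k_f\in \Cinft(\gmg\times\gmg)$. To identify it as the integral kernel of $\pi_\Gamma(f)$, take $\phi \in \L^2(\gmg)$, realised as a $\Gamma$-left-invariant function on $G$, and compute
\begin{equation*}
\pi_\Gamma(f)\phi(h) = \int_G f(g) \phi(hg)\d g = \int_G f(h^{-1}g')\phi(g')\d g',
\end{equation*}
using the substitution $g'=hg$ and left-invariance of $\d g$. Unfolding the integral over $G$ as an integral over $\gmg$ with a sum over $\Gamma$, and interchanging sum and integral (which is legitimate because the dominating sum $\sum_\gamma |f(h^{-1}\gamma g')|$ is, by the estimate above, locally bounded and the fundamental domain is relatively compact, so Fubini applies against any $\phi\in \L^2(\gmg)\subset \L^1_{\text{loc}}$), one obtains
\begin{equation*}
\pi_\Gamma(f)\phi(h) = \int_{\gmg} \sum_{\gamma\in\Gamma} f(h^{-1}\gamma g')\phi(\gamma g')\d g' = \int_{\gmg} k_f(g',h)\phi(g')\d g',
\end{equation*}
which is the desired representation. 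The boundedness of $\pi_\Gamma(f)$ on $\L^2(\gmg)$ is then automatic from Corollary \ref{cor:1} (trace-class, hence bounded), or directly from the Schur test applied to the continuous kernel $k_f$ on the compact space $\gmg\times\gmg$. The only delicate point, and the step most worth stressing, is the lattice-point count combined with the choice of $\kappa>\beta$: everything else is routine dominated convergence and unfolding, but this estimate is what genuinely uses both the Schwartz-type decay of $f$ and the cocompactness of $\Gamma$.
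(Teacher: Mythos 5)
Your argument is correct, but for the central convergence estimate you take a genuinely more self-contained route than the paper. The paper majorizes $\sum_\gamma|f(h^{-1}\gamma g)|$ by the Poincar\'e series $\sum_\gamma e^{-\kappa d(\gamma g,h)}$ and then appeals to the general theory of Poincar\'e series (Nicholls) to assert the existence of a finite critical exponent $\delta_\Gamma$, choosing $\kappa>\delta_\Gamma$; it then proves pointwise absolute convergence and separately shows continuity by a tail-splitting argument $\|\gamma\|\le R$ versus $\|\gamma\|>R$. You instead \emph{prove} the exponential lattice-point bound $N(R)\le C_1e^{\beta R}$ directly from the cocompactness of $\Gamma$ (packing the relatively compact translates $\gamma F$ into a ball and using exponential volume growth), combine this with the triangle inequality $|h^{-1}\gamma g|\ge|\gamma|-|g|-|h|$, and sum by parts, which delivers uniform convergence on compacta in one stroke. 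This buys you two things: the argument is entirely self-contained (no external critical-exponent theory), and the uniformity on compacta, which is actually asserted in the lemma, is obtained immediately from the estimate rather than through a separate continuity argument; the only price is the fundamental-domain packing lemma, which the paper's approach avoids having to write out. The remainder of your proof—term-by-term differentiation for the derivatives $dL(X^\alpha)f$, the unfolding identity, the Fubini justification, and the boundedness of $\pi_\Gamma(f)$—matches the paper's. One cosmetic point: in your packing step you want $\gamma F\subset B_{R+D'}(e)$ with $D'=\sup_{x\in F}|x|$ rather than the diameter of $F$, but this changes nothing, since both are finite for a relatively compact fundamental domain.
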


\begin{proof}
 By Definition \ref{def:1},  for all $\kappa > 0$  we have  the  inequality
\begin{equation*}
|f(h^{-1}\gamma g)| \le C_\kappa e^{-\kappa|h^{-1}\gamma g|}, \qquad g,h \in G,
\end{equation*}
as well as for all derivatives of {all} orders of $f$.  Consequently,
\begin{equation}\label{boundps1}
\sum_{\gamma \in \Gamma}|f(h^{-1}\gamma g)| \le \sum_{\gamma \in \Gamma} e^{-\kappa\, d(h^{-1}\gamma g, e)} = \sum_{\gamma \in \Gamma} e^{-\kappa\, d(\gamma g, h)},
\end{equation}
since  left-translation by $h$ is an isometry.  Now, recall that  for a metric space $({\bf X},d)$, and a discrete infinite subgroup $\Gamma' \subset \mathrm{Iso}({\bf X})$ of the isometry group of ${\bf X}$ the corresponding Poincar\'e series  is defined by
\begin{equation}\label{poincser}
P(s,p, q)= \sum_{\gamma \in \Gamma'} e^{-s\, d(p,\gamma q)}, \qquad p,q \in {\bf X}, \quad s >0.
\end{equation}
By general theory \cite{nichols89}, for each discrete subgroup $\Gamma'$, there exists a $\delta_{\Gamma'} >0$, called the critical exponent of $\Gamma'$, such that $P(s,p,q)$ converges for $s> \delta_{\Gamma'}$ and diverges for $s < \delta_{\Gamma'}$. Furthermore, the exponent $\delta_{\Gamma'}$ does not depend on $p$ or $q$. 
The estimate \eqref{boundps1}   means that for fixed $g,h \in G$,  the series $k_f(g,h)$ is  majorized by the Poincar\'e series $\sum_{\gamma \in \Gamma} e^{-\kappa\, d(\gamma g, h)}$. Choosing  $\kappa > \delta_\Gamma$, 
we deduce  that  $k_f(g,h)$ is absolutely convergent for fixed $g,h \in G$. To see that $(g,h) \mapsto k_f(g,h)$ is continuous,  note that 
\begin{align*}
\Big|k_f(h,g) - k_f(z,g)\Big| &= \Big| \sum_{\gamma \in \Gamma}f(h^{-1}\gamma g) - \sum_{\gamma \in \Gamma}f(z^{-1}\gamma g)\Big| \\ & \le \sum_{\substack{\gamma \in \Gamma\\ \|\gamma\| \le R}}\Big|f(h^{-1}\gamma g) - f(z^{-1}\gamma g)\Big| + \Big| \sum_{\substack{\gamma \in \Gamma\\ \|\gamma\| > R}} f(h^{-1}\gamma g)\Big| + \Big| \sum_{\substack{\gamma \in \Gamma\\ \|\gamma\| > R}} f(z^{-1}\gamma g)\Big|.
\end{align*}
Since the sum $\sum_{\gamma \in \Gamma}f(h^{-1}\gamma g)$ converges, the last two terms in the last inequality can be made as small as required by choosing $R$ big enough,  while  the first term becomes   small if $d(h,z)$ is small, being a finite some of continuous functions.  Thus, $(g,h) \mapsto k_f(g,h)$ is continuous. Since the same argument works for all  derivatives,  $f(h,g)$ converges uniformly on compacta to a $C^\infty$ function.
To see that $k_f(g,h)$ represents the Schwartz kernel of $\pi_\Gamma(f)$ for $f \in \S(G)$, note that $\pi_\Gamma(f)$ acts on $\phi \in L^2(\Gamma \backslash G)$ according to
\begin{equation}\label{tff1}
(\pi_\Gamma(f)\phi)(h) =\int_G f(g)\phi(hg)\d g = \int_G f(h^{-1}g)\phi(g)\d g, \qquad h \in G,
\end{equation}
the integral being absolutely convergent due to the inequality
\[
|\alpha \beta| \le \frac{1}{2}|\alpha|^2 + \frac{1}{2}|\beta|^2, \quad \alpha, \, \beta \in \C,
\]
and the  fact that if $f$ is rapidly decreasing,  $f\cdot\overline{f}$ is rapidly decreasing, too. By   Fubini's theorem, and the first part of the  lemma  we therefore obtain for each $h \in G$
\begin{align*}\label{tff1}
(\pi_\Gamma(f)\phi)(h) &= \int_{\Gamma \backslash G}  \left(\sum_{\gamma \in \Gamma}f(h^{-1}\gamma g) \phi(\gamma g)\right) \d g =\int_{\Gamma \backslash G}  k_f(h, g) \phi(g) \d g,
\end{align*}
since $\phi(\gamma g) =\phi(g)$. Thus, $\pi_\Gamma(f)$ is an integral operator with kernel $k_f(h,g)\in \Cinft(\gmg\times \gmg)$. 
\end{proof}

\begin{corollary}
Let $f \in \S(G)$. Then $f$ satisfies Selberg's trace formula
 \bq
\label{eq:selberg}
\bigoplus_{\rho \in \widehat{G}} m_\rho \tr  \pi_\rho (f) =\sum_{[\gamma]}\vol(\Gamma_\gamma \backslash G_\gamma) \int_{G_\gamma \backslash G} f(g^{-1}\gamma g) \d g,
\eq
where $[\gamma]$ denotes the conjugacy class of $\gamma$ in $  \Gamma$, and $\Gamma_\gamma$ and $G_\gamma$ are the centralizers of $\gamma$ in $\Gamma$ and $G$, respectively.
 \end{corollary}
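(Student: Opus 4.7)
The plan is to reduce the statement to the classical Selberg trace formula argument, using the estimates from Lemma \ref{prop01} to upgrade from compactly supported $f$ to rapidly decreasing $f$. First, combining the decomposition \eqref{eq:21} with Corollary \ref{cor:1} (applied to $M=\Gamma\backslash G$, which carries a transitive $G$-action) shows that $\pi_\Gamma(f)$ is of trace class, with
\begin{equation*}
\sum_{\rho \in \widehat G} m_\rho \tr \pi_\rho(f) \;=\; \tr \pi_\Gamma(f) \;=\; \int_{\Gamma\backslash G} k_f(g,g)\,\d g,
\end{equation*}
the second equality following from the trace formula \eqref{eq:10} together with the global expression \eqref{eq:14} for the kernel, which is precisely what Lemma \ref{prop01} establishes for $f \in \S(G)$.

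Next I would expand $k_f(g,g)=\sum_{\gamma \in \Gamma} f(g^{-1}\gamma g)$ and partition $\Gamma$ by $\Gamma$-conjugacy classes, writing a general $\gamma$ as $\delta^{-1}\gamma_0\delta$ with $\gamma_0$ a fixed representative and $\delta$ running over a system of left $\Gamma_{\gamma_0}$-coset representatives in $\Gamma$. Assuming the sum over $[\gamma_0]$ and the integral can be interchanged, the standard unfolding identity
\begin{equation*}
\int_{\Gamma\backslash G}\sum_{\delta \in \Gamma_{\gamma_0}\backslash \Gamma} f(g^{-1}\delta^{-1}\gamma_0\delta g)\,\d g \;=\; \int_{\Gamma_{\gamma_0}\backslash G} f(g^{-1}\gamma_0 g)\,\d g
\end{equation*}
produces the right-hand side $\sum_{[\gamma_0]}\int_{\Gamma_{\gamma_0}\backslash G} f(g^{-1}\gamma_0 g)\,\d g$. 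Finally, factoring $\Gamma_{\gamma_0}\backslash G \to G_{\gamma_0}\backslash G$ with fiber $\Gamma_{\gamma_0}\backslash G_{\gamma_0}$ pulls out the factor $\vol(\Gamma_{\gamma_0}\backslash G_{\gamma_0})$; this volume is finite because $\Gamma$ is cocompact in $G$, which forces $\Gamma_{\gamma_0}$ to be cocompact in $G_{\gamma_0}$.

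The main obstacle, and the only thing that distinguishes the $\S(G)$ case from the classical $\CT(G)$ case, is the justification of each interchange of sum and integral. Here I would lean directly on the Poincar\'e series bound from the proof of Lemma \ref{prop01}: for any $\kappa > 0$ one has $|f(h^{-1}\gamma g)| \le C_\kappa e^{-\kappa d(h^{-1}\gamma g,e)}$, so that taking $\kappa > \delta_\Gamma$ majorizes $\sum_\gamma |f(g^{-1}\gamma g)|$ by a convergent Poincar\'e series locally uniformly in $g$. Since $\Gamma\backslash G$ is compact, this yields an $L^1(\Gamma\backslash G)$ dominator and hence Fubini applies to the double sum-integral over $[\gamma_0]$ and $\delta$. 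To justify the fiber-integration step $\Gamma_{\gamma_0}\backslash G \to G_{\gamma_0}\backslash G$, I would invoke the same rapid-decay estimate combined with the fact that $|g^{-1}\gamma_0 g|$ grows at infinity on $G_{\gamma_0}\backslash G$ (a consequence of $\gamma_0$ being a semisimple or unipotent element in a semisimple $G$, with the centralizer absorbing the "slow" directions), ensuring that $\int_{G_{\gamma_0}\backslash G} |f(g^{-1}\gamma_0 g)|\,\d g < \infty$ for each class $[\gamma_0]$ and summably so in $[\gamma_0]$. Once these absolute-convergence verifications are in place, the identity is the standard unfolding computation verbatim.
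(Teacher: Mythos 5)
Your proposal follows the same route as the paper: start from Lemma \ref{prop01} and the trace-equals-diagonal-integral identity, decompose the sum over $\Gamma$ into conjugacy classes, unfold against $\Gamma_\gamma\backslash\Gamma$, and factor the integral through $G_\gamma\backslash G$ with compact fiber $\Gamma_\gamma\backslash G_\gamma$. One small caveat: your parenthetical ``semisimple or unipotent'' is misleading, since orbital integrals along nontrivial unipotent classes do \emph{not} converge absolutely even for Schwartz functions; the argument works here only because a uniform lattice in a semisimple group contains no nontrivial unipotent elements, so every orbital integral in the sum is over a closed semisimple conjugacy class, on which $|g^{-1}\gamma g|$ indeed grows properly on $G_\gamma\backslash G$.
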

 \begin{proof}
  Lemma \ref{Mia} and \ref{prop01}  yield 
\[
\tr \pi_\Gamma(f) = \int_{\Gamma \backslash G} k_f(g,g)dx = \int_{\Gamma \backslash G}\sum_{\gamma \in \Gamma}f(g^{-1}\gamma g)\d g.
\]
 Denoting by  $[\gamma]$  the conjugacy class of $\gamma$, and by $\Gamma_\gamma$ the centralizer of $\gamma$ in $\Gamma$ one deduces
\[
\tr \pi_\Gamma(f)= \int_{\Gamma \backslash G}\sum_{[\gamma]}\sum_{\delta \in \Gamma_\gamma \backslash \Gamma}f(g^{-1}\delta^{-1}\gamma\delta g)\d g = \sum_{[\gamma]}\int_{\Gamma \backslash G}\sum_{\delta \in \Gamma_\gamma \backslash \Gamma}f(g^{-1}\delta^{-1}\gamma\delta g)\d g,
\]
everything being uniformly convergent. Replacing the inner sum by an integral with a counting measure $d\delta$ yields 
\[
\tr \pi_\Gamma(f)= \sum_{[\gamma]}\int_{\Gamma \backslash G}\int_{\Gamma_\gamma \backslash \Gamma}f(g^{-1}\delta^{-1}\gamma\delta g)d\delta \d g
= \sum_{[\gamma]}\int_{\Gamma_\gamma \backslash G} f(y^{-1}\gamma y)dy,
\]
where we took into account  that for any sequence $G_1\subset G_2 \subset G$ of unimodular groups, a right invariant measure on $G_1 \backslash G$  can be written as the product of right invariant measures on $G_2\backslash G$, and $G_1\backslash G_2$,  respectively. With the same argument, the above equality can be rewritten as
\[
\tr \pi_\Gamma(f)= \sum_{[\gamma]}\int_{G_\gamma \backslash G}\int_{\Gamma_\gamma \backslash G_\gamma}f(v^{-1}u^{-1}\gamma uv)du dv,
\]
where  $G_\gamma$ denotes the centralizer of $\gamma$ in $G$. Since $u^{-1} \gamma u=\gamma$, and $\Gamma_\gamma \backslash G_\gamma$ is compact, one finally obtains the {geometric side} of the trace formula
\[
\tr \pi_\Gamma(f) = \sum_{[\gamma]}\text{vol}(\Gamma_\gamma \backslash G_\gamma) \int_{G_\gamma \backslash G} f(g^{-1}\gamma g) \d g.
\]
 To obtain the spectral side, note that according to the decomposition \eqref{eq:21} we have
\[
\tr \pi_\Gamma(f) =\bigoplus_{\rho \in \widehat{G}} m_\rho \tr \pi_\rho(f),
\]
 where $\pi_\rho(f) = \int_G f(g) \pi_\rho(g)\d g$ is of trace class,  and defines a distribution 
\bqn 
\theta_\rho: \CT(G) \ni f \mapsto  \tr\pi_\rho(f)\in \C
\eqn
on $G$ which represents  the global character of $\rho$. 
 Selberg's trace formula for $f \in \S(G)$ now follows.
\end{proof}

Consider next  a maximal compact subgroup $K$ of $G$, and $\sigma \in \widehat{K}$. As a consequence of Theorem \ref{thm:B}, and Selberg's formula \eqref{eq:selberg} we obtain
\begin{proposition}
\label{prop:2}
Let  $f_t\in \S(G)$, $t >0$, be the Langlands kernel of a semigroup generated by a strongly elliptic operator associated to the representation $\pi_\Gamma$. Then
\begin{gather*}
   (L_\sigma f_t)(e) =\frac{d_{\sigma \otimes \sigma} }{(2\pi)^{\dim G/K}\vol (\gmg) \, t^{\frac{\dim G/K}{q}}}\sum_\iota \int_{\Reg \mathcal{C}}\alpha_\iota ( p)    b_{f_t}^{ \iota}(\phi_\iota(p),\xi/t^{1/q};k_1,k)  \overline{(\chi_\sigma\otimes \chi_\sigma)(k_1,k)} \\  \cdot j_\iota(p) \frac {d(\Reg \mathcal{C})(p,\xi,k_1,k)}{|\det   \, \Phi_{\iota \iota}''(p, \xi,k_1,k)_{N_{(p, \xi, k_1,k)}\mathrm{Reg}\, \mathcal{C}}|^{1/2}},
\end{gather*}
up to terms of order $ O(t^{-(\dim G/K-1)/q} (\log t)^{\Lambda -1})$, the notation being as in Theorem \ref{thm:B}. Here $L_\sigma$ denotes the projector onto the isotypic component $\L^2(G)_\sigma$. If, in addition,  Assumption 1 is satisfied, the leading term of $(L_\sigma f_t)(e)$ is given by
\begin{align*}
\frac{d_{\sigma \otimes \sigma}  [(\pi_\sigma\otimes \pi_\sigma)_{|\mathbb{H}}:\1]}{(2\pi)^{\dim G/K}\vol (\gmg) \, t^{\frac{\dim G/K}{q}}} \widetilde {\vol}(\Xi/\K), 
\end{align*}
where 
\bqn
\widetilde {\vol}(\Xi/\K) =\sum_\iota \int_{{\mathrm{Reg}} \, \Xi}\hat \F_\iota (p, \xi) \alpha_\iota(p)  j_\iota(p)  \frac {  d(\Reg \Xi)(p,\xi)}{\vol \mathcal{O}_{(p,\xi)}},
\eqn
and $\hat \F_\iota (p, \xi)=  c_0(e)\int_{\R^d} e^{i \sum_{l,j} c^j_l(p) \zeta_l \xi_j}  e^{- b|e^{ \sum \zeta_i X_i}|^{q/(q-1)}}  \Psi^\ast(d_G)(\zeta)$.
\end{proposition}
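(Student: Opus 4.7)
The plan is to reduce $(L_\sigma f_t)(e)$ to the heat trace $\tr \pi_\Gamma(H^\sigma_{f_t}) = \tr(P_\sigma \circ \pi_\Gamma(f_t) \circ P_\sigma)$ via Selberg's trace formula, and then invoke Theorem~\ref{thm:B} (respectively Corollary~\ref{cor:B}). First I would establish the pointwise identity $(L_\sigma f_t)(e) = H^\sigma_{f_t}(e)$. Starting from the definition $H^\sigma_{f_t}(e) = d_\sigma^2 \int_K\int_K f_t(k_1^{-1}k^{-1}) \overline{\chi_\sigma(k_1)\chi_\sigma(k)}\d k \d k_1$, the substitution $u = kk_1$, combined with the Schur-orthogonality identity $\int_K \overline{\chi_\sigma(k^{-1}u)}\,\overline{\chi_\sigma(k)} \d k = d_\sigma^{-1} \overline{\chi_\sigma(u)}$, collapses the double integral to $d_\sigma \int_K f_t(u^{-1}) \overline{\chi_\sigma(u)} \d u$, which is exactly $(L_\sigma f_t)(e)$ for the left-regular representation.

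Next, I would apply Selberg's trace formula (the corollary after Lemma~\ref{prop01}) to the rapidly decreasing function $H^\sigma_{f_t} \in \S(G)$, isolating the contribution of the identity conjugacy class:
\[
\tr \pi_\Gamma(H^\sigma_{f_t}) = \vol(\gmg)\, H^\sigma_{f_t}(e) + \mathcal{E}(t),
\]
where $\mathcal{E}(t) := \sum_{[\gamma] \neq [e]} \vol(\Gamma_\gamma \backslash G_\gamma) \int_{G_\gamma \backslash G} H^\sigma_{f_t}(g^{-1}\gamma g) \d g$. Theorem~\ref{thm:B} applied to the left-hand side furnishes the asymptotic in terms of the oscillatory integral over $\Reg \mathcal{C}$, with $n = \dim M = \dim G$, principal orbit dimension $\kappa = \dim K$, and thus $n - \kappa = \dim G/K$. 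Dividing through by $\vol(\gmg)$ and substituting $H^\sigma_{f_t}(e) = (L_\sigma f_t)(e)$ then yields the proposition, provided $\mathcal{E}(t)/\vol(\gmg)$ is absorbed into the stated remainder $O(t^{-(\dim G/K - 1)/q}(\log t)^{\Lambda - 1})$.

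The main obstacle is precisely this last step, the control of $\mathcal{E}(t)$. Here I would rely on the Gaussian upper bound
\[
|H^\sigma_{f_t}(g)| \leq C\, t^{-d/q} e^{\omega t} \exp\bigl(-b\,(d(gK,K)^q/t)^{1/(q-1)}\bigr)
\]
from the corollary following Theorem~\ref{thm:A}. Rewriting $\mathcal{E}(t) = \int_{\gmg} \sum_{\gamma \neq e} H^\sigma_{f_t}(g^{-1}\gamma g) \d g$ and observing that $d(g^{-1}\gamma g K, K) = d(\gamma g K, g K)$, the (tacit) torsion-freeness of $\Gamma$ ensures that each non-identity $\gamma$ acts freely and properly discontinuously on the Hadamard manifold $G/K$ with strictly positive displacement $\delta(\gamma) > 0$. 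Combined with co-compactness of $\Gamma$, a standard Poincar\'e-series type comparison (summing $e^{-b\,\delta(\gamma)^{q/(q-1)} t^{-1/(q-1)}}$ over $\gamma$) yields $\mathcal{E}(t) = O(e^{-c/t^{1/(q-1)}}) = O(t^\infty)$ for some $c > 0$, comfortably inside the claimed remainder.

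The second assertion follows by an identical scheme, invoking Corollary~\ref{cor:B} in place of Theorem~\ref{thm:B} to evaluate the leading term of $\tr \pi_\Gamma(H^\sigma_{f_t})$ in terms of the Gaussian symplectic volume $\widetilde{\vol}(\Xi/\K)$ under Assumption 1, and then dividing by $\vol(\gmg)$.
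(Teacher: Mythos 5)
Your proposal follows essentially the same strategy as the paper: reduce $(L_\sigma f_t)(e)$ to the heat trace $\tr\pi_\Gamma(H^\sigma_{f_t})$ via the identity contribution in Selberg's formula, then invoke Theorem~\ref{thm:B} (resp.\ Corollary~\ref{cor:B}) with $n=\dim G$, $\kappa=\dim K$. Your explicit verification that $(L_\sigma f_t)(e)=H^\sigma_{f_t}(e)$ via the substitution $u=kk_1$ and Schur orthogonality is correct and worth writing out, since the paper uses this identification only implicitly.

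Where you genuinely depart from the paper is in isolating the identity conjugacy class. The paper first replaces $f_t$ by the cutoff $f_t\beta$ supported near $e$ (so $\tr\pi_\Gamma(H^\sigma_{f_t})=\tr\pi_\Gamma(H^\sigma_{f_t\beta})+O(t^\infty)$ by Theorem~\ref{thm:A}), then observes that the $G$-conjugacy classes $[\gamma]_G$ are closed and only finitely many meet a compactum (Mostow, Lemma 8.1); choosing $\supp\beta$ small enough makes the non-identity orbital integrals vanish \emph{exactly}. You instead keep the full kernel $H^\sigma_{f_t}$ and dominate the tail $\mathcal{E}(t)$ by the Gaussian bound from the corollary after Theorem~\ref{thm:A} together with a Poincar\'e-series estimate, obtaining $\mathcal{E}(t)=O(e^{-c/t^{1/(q-1)}})$. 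Both routes land in the same place; yours trades the compactness/conjugacy-class argument for a quantitative decay estimate, which is arguably more robust but requires the strictly positive displacement $\delta(\gamma)>0$.

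That last point deserves attention. Proposition~\ref{prop:2} as stated assumes only that $\Gamma$ is discrete and uniform — torsion-freeness appears in the paper only from Theorem~\ref{thm:5} onward. You correctly flag your use of torsion-freeness as ``tacit,'' and in fact you are not alone: if $\Gamma$ contains a nontrivial elliptic (finite-order) $\gamma$, then $[\gamma]_G$ meets $K$, so no shrinking of $\supp\beta$ can remove $\gamma$'s contribution from the geometric side of Selberg applied to $H^\sigma_{f_t\beta}$ either (since the localizing set is effectively $KUK\supset K$), and such $\gamma$ contribute at an order comparable to the leading term. So the torsion-free hypothesis is genuinely needed here, and your remark makes an unstated assumption of the paper explicit rather than introducing a new one. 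Apart from this shared caveat, your argument is sound and the conclusion is obtained exactly as in the paper by dividing through by $\vol(\gmg)$ and applying Corollary~\ref{cor:B} for the second assertion.
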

\begin{proof} 
By Theorem \ref{thm:A}, $\tr \pi_\Gamma(H^\sigma_{f_t\beta})=\tr \pi_\Gamma(H^\sigma_{f_t})+O(t^\infty)$, where $0 \leq \beta \leq 1$ is a test function on $G$ with support in a sufficiently small neighborhood $U$ of $e \in G$, and which is equal $1$ close to $e$. Furthermore, by Theorem \ref{thm:B}, 
\begin{gather*}
\tr \pi_\Gamma(H^\sigma_{f_t})=\frac{d_{\sigma \otimes \sigma} }{(2\pi)^{\dim G/K}t^{\frac{\dim G/K}{q}}}\sum_\iota \int_{\Reg \mathcal{C}}\alpha_\iota ( p)    b_{f_t}^{ \iota}(\phi_\iota(p),\xi/t^{1/q};k_1,k)  \overline{(\chi_\sigma\otimes \chi_\sigma)(k_1,k)} j_\iota(p)\\  \cdot \frac {d(\Reg \mathcal{C})(p,\xi,k_1,k)}{|\det   \, \Phi_{\iota \iota}''(p, \xi,k_1,k)_{N_{(p, \xi, k_1,k)}\mathrm{Reg}\, \mathcal{C}}|^{1/2}}+ O(t^{-(\dim G/K-1)/q} (\log t)^{\Lambda -1}).
\end{gather*}
Next, recall that for any $\gamma \in \Gamma \subset G$, the $G$-conjugacy class $[\gamma]_G$ is closed. Furthermore,  every  compactum in G meets only finitely many $[\gamma]_G$, see \cite{mostow70}, Lemma 8.1. Consequently, by choosing the support of $\beta$ sufficiently small, we obtain with \eqref{eq:selberg} 
\bqn 
\tr \pi_\Gamma(H^\sigma_{f_t\beta})= \vol (\gmg) \, H^\sigma_{f_t}(e),
\eqn
and the assertion follows with Theorem \ref{thm:B}, and Corollary \ref{cor:B}. 
 \end{proof}

We  now apply our results to heat kernels of Bochner-Laplace operators on compact, locally symmetric spaces. Let $(\pi_\sigma,V_\sigma)$ be an irreducible unitary representation of $K$ of class $\sigma \in \widehat{K}$. Consider the associated homogeneous vector bundle $\widetilde E_\sigma=(G \times V_\sigma)/K$ over $G/K$, and endow it with the $G$-invariant Hermitian fibre metric  induced by the inner product in $V_\sigma$. Let $\g=\k\oplus \p$ be a Cartan decomposition of $\g$ as in \eqref{eq:cartan}, and consider the unique $G$-invariant connection $\widetilde \nabla$ on $\widetilde E_\sigma$ given by the condition that if $s$ is a smooth cross section, $Y \in \p$, and $\Pi: G \rightarrow G/K$ is the canonical projection, then
\bqn 
\widetilde \nabla_{\Pi_\ast(Y)}(s)= \frac{d}{dt} s( \e{tY} K)_{|s=0},
\eqn
$\Pi_\ast$ being the differential of $\Pi$ at $e \in G$. Let further $\widetilde \Delta_\sigma= \widetilde \nabla^\ast \widetilde \nabla$ be the Bochner Laplace operator of $\widetilde \nabla$, and denote by $\Cinft(\widetilde E_\sigma)$, $\CT(\widetilde E_\sigma)$, and $\L^2(\widetilde E_\sigma)$ the usual spaces of sections of $\widetilde E_\sigma$. With respect to the identification
\bqn 
\Cinft(\widetilde E_\sigma)=(\Cinft(G) \otimes V_\sigma)^K, \eqn
where $(\Cinft(G) \otimes V_\sigma)^K=\mklm{\phi:G\rightarrow V_\sigma \text{ is smooth  and }  \phi(gk ) = \pi_\sigma(k)^{-1} \phi(g), \, k \in K, \, g \in G}$, and the corresponding identifications for $\CT(\widetilde E_\sigma)$ and $\L^2(\widetilde E_\sigma)$, one has
\bqn 
\widetilde \Delta_\sigma=-dR(\Omega) \otimes \id + \id \otimes d\pi_\sigma (\Omega_K)=-dR(\Omega) \otimes \id + \lambda_\sigma \id
\eqn
for some $\lambda_\sigma \geq 0$, $\Omega$ and $\Omega_K$ being the Casimir elements of $G$ and $K$, respectively, see \cite{miatello80}, Proposition 1.1. As it turns out, the operator $\widetilde \Delta_\sigma: \CT(\widetilde E_\sigma) \rightarrow \L^2(\widetilde E_\sigma)$ is essentially self-adjoint, and has a unique self-adjoint extension which we shall also denote by $\widetilde \Delta_\sigma $. It is a positive operator, and we denote  the corresponding heat semigroup by $e^{-t\widetilde \Delta_\sigma}$. It is given by
\bq
\label{eq:28}
(e^{-t\widetilde \Delta_\sigma} \phi)(g)= \int_G h_t^\sigma ( g_1) \phi(g g_1) \d g_1, \quad \phi \in (\L^2(G) \otimes V_\sigma)^K, 
\eq
where $h_t^\sigma : G \rightarrow \mathrm{End } (V_\sigma)$ is square integrable, and has the covariance property
\bqn 
h_t^\sigma (g)= \pi_\sigma(k)h_t^\sigma (k^{-1} g k_1) \pi_\sigma(k_1)^{-1}, \qquad g \in G, \, k, k_1 \in K. 
\eqn
As one can show, $h_t^\sigma$ is actually given in terms of the classical heat kernel $p_t$ introduced in \eqref{eq:22} according to 
\bq
\label{eq:29}
h_t^\sigma (g)= e^{t \lambda_\sigma} \int_K \int_K p_t(k^{-1} g k_1) \pi_\sigma(k k_1^{-1}) \d k_1 \d k,
\eq
see \cite{barbasch-moscovici83} and \cite{mueller98}, Section 3. Let now $\Gamma $ be a discrete, uniform, torsion-free subgroup of $G$. Then $\Gamma$ acts without fixed points on $G/K$,  and $\Gamma \backslash G/K$ constitutes a compact, locally symmetric space. Let $E_\sigma= \Gamma \backslash \widetilde E_\sigma \rightarrow \Gamma \backslash G/K $ be the pushdown of the homogenous vector bundle $\widetilde E_\sigma\rightarrow G/K$. Again, we have identification
\bqn 
\Cinft( E_\sigma)=(\Cinft(\gmg) \otimes V_\sigma)^K,
\eqn
and similarly for $\CT(E_\sigma)$, and $\L^2(E_\sigma)$. 
Since $\widetilde \Delta_\sigma$ is $G$-invariant, it induces an elliptic, essentially self-adjoint operator $ \Delta_\sigma=\nabla^\ast  \nabla: \Cinft( E_\sigma) \rightarrow \L^2( E_\sigma)$, where $\nabla$ is the pushdown of the canonical connection $\widetilde \nabla$. Let $e^{-t \Delta_\sigma}$ be the corresponding heat semigroup. With respect to a basis $\mklm{e_i}$ of $V_\sigma$, we obtain with \eqref{eq:28} and \eqref{eq:29}  
\bqn 
[e^{-t \Delta_\sigma} \phi)(g)]_j= \sum_{k=1}^{\dim \sigma} \pi_\Gamma( ^{jk} H_t^\sigma) [\phi(g)]_k,   \quad \phi \in (\L^2(\Gamma \backslash G) \otimes V_\sigma)^K, 
\eqn
where 
\bqn 
^{jk} H_t^\sigma(g) =e^{t \lambda_\sigma} \int_K \int_K p_t(k^{-1} g k_1) (\pi_\sigma(k k_1^{-1}))_{jk} \d k_1 \d k.
\eqn
Thus, $e^{-t \Delta_\sigma}$ is given by the matrix of convolution operators $\pi_\Gamma( ^{jk} H_t^\sigma)$. The kernels $ ^{jk} H_t^\sigma$ are essentially of the  same form as the kernels $H_{p_t}^\sigma$ defined in \eqref{eq:11a}, and we arrive at
\begin{theorem}
\label{thm:5}
Let $\sigma \in \widehat{K}$, and $\Delta_\sigma$ be the Bochner-Laplace operator on the homogeneous vector bundle $E_\sigma =\Gamma \backslash (G\times V_\sigma)/K\rightarrow \gmg/K$. Then 
\begin{gather*}
\tr e^{-t \Delta_\sigma} =\frac{ e^{t\lambda_\sigma} \int_{\mathbb{H}} \tr \pi_\sigma(kk_1^{-1}) dk_1 dk}{(2\pi)^{\dim G/K}t^{\frac{\dim G/K}{2}}} \widetilde {\vol}(\Xi/\K) + O(e^{t \lambda_\sigma}t^{-(\dim G/K-1)/2} (\log t)^{\Lambda -1}) ,
\end{gather*}
  where 
\bqn 
\widetilde {\vol}(\Xi/\K)=\sum_\iota   \int_{\Reg \Xi} \hat \F_\iota (p, \xi)  \frac { \alpha_\iota(p)  j_\iota(p)  d(\Reg \Xi)(p,\xi)}{\vol \, \mathcal{O}_{(p,\xi)}},
\eqn
and   
$
\hat \F_\iota (p,\xi)= (4\pi)^{-d/2} \int_{\R^d} e^{i \sum_{l,j} c^j_l(p) \zeta_l \xi_j}  e^{-  |e^{\sum \zeta_iX_i}|^2/4} \Psi^\ast(d_G)(\zeta), \, b>0,
$
the notation being the same as in Corollary \ref{cor:5}. 
\end{theorem}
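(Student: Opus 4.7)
The plan is to reduce $\tr e^{-t\Delta_\sigma}$ to a scalar convolution trace on $\L^2(\gmg)$ and then invoke the equivariant asymptotic machinery developed in Sections 3--4, in particular Theorem \ref{thm:B} and Corollary \ref{cor:B}.

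First I would use the identification $\L^2(E_\sigma)\cong(\L^2(\gmg)\otimes V_\sigma)^K$ together with \eqref{eq:28}--\eqref{eq:29} and the covariance $h_t^\sigma(k^{-1}gk_1)=\pi_\sigma(k)^{-1}h_t^\sigma(g)\pi_\sigma(k_1)$ to realize $e^{-t\Delta_\sigma}$ as an integral operator on $\gmg/K$ whose fibrewise kernel is $\sum_{\gamma\in\Gamma}h_t^\sigma(g_1^{-1}\gamma g_2)$. Taking the fibrewise trace on the diagonal and using cyclicity together with the right-$K$-invariance of $g\mapsto\tr h_t^\sigma(g^{-1}\gamma g)$, one obtains, with $\vol K=1$,
\begin{equation*}
\tr e^{-t\Delta_\sigma}=\int_{\gmg}\sum_{\gamma\in\Gamma}\tr h_t^\sigma(g^{-1}\gamma g)\,dg=\tr\pi_\Gamma(\widetilde H^\sigma_{p_t}),
\end{equation*}
where, by \eqref{eq:29}, $\widetilde H^\sigma_{p_t}(g):=\tr h_t^\sigma(g)=e^{t\lambda_\sigma}\int_K\int_K p_t(k^{-1}gk_1)\tr\pi_\sigma(kk_1^{-1})\,dk_1\,dk$. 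The membership $\widetilde H^\sigma_{p_t}\in\S(G)$ follows from the bounds \eqref{eq:boundlang} on $p_t$, and the right-hand side is a trace in the sense of Lemma \ref{prop01}.

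Next, observe that $\widetilde H^\sigma_{p_t}$ has precisely the structure of the kernel \eqref{eq:11a}, except that the scalar amplitude $d_\sigma^2\overline{(\chi_\sigma\otimes\chi_\sigma)(k_1,k)}$ is replaced by $e^{t\lambda_\sigma}\tr\pi_\sigma(kk_1^{-1})$. Since Proposition \ref{prop:1}, Corollary \ref{cor:3}, Theorem \ref{thm:I(mu)}, and Theorem \ref{thm:B} rely only on the abstract properties of the amplitudes, they apply verbatim with this substitution; the bounded prefactor $e^{t\lambda_\sigma}$ simply passes through the analysis as a multiplicative constant. Moreover, the heat kernel $p_t$ satisfies Assumption 1 with $q=2$, $b=1/4$ and $c_0(e)=(4\pi)^{-d/2}$ by \cite{donnelly79}, as already exploited in Corollary \ref{cor:5}, so the computation performed in the proof of Corollary \ref{cor:B} applies. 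The fibration identity \eqref{eq:4.2.2012}, with $v(k_1,k)=\tr\pi_\sigma(kk_1^{-1})$, then converts the integral over $\Reg\mathcal{C}$ into $\int_{\mathbb{H}}\tr\pi_\sigma(kk_1^{-1})\,dk_1\,dk\cdot\widetilde{\vol}(\Xi/\K)$, producing the claimed leading coefficient and remainder.

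Finally, to match the statement, one checks that for torsion-free $\Gamma$ the $\K$-action on $M=\gmg$ factors through the multiplication map $\K\to K$, and since $g^{-1}\Gamma g\cap K=\{e\}$ the induced $K$-action is free. Hence $\kappa=\dim K$, so that $n-\kappa=\dim G/K$, while the principal isotropy group is $\mathbb{H}=\{(k,k^{-1}):k\in K\}\subset\K$. The main obstacle is the bookkeeping in the first step: one must argue carefully that the matrix-valued nature of $h_t^\sigma$ and the $K$-equivariance of sections of $E_\sigma$ conspire to reduce $\tr e^{-t\Delta_\sigma}$ to the scalar convolution trace $\tr\pi_\Gamma(\widetilde H^\sigma_{p_t})$, so that the framework of Sections 3--4 can be invoked without modification. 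Once this reduction is rigorously in place, the remaining steps are direct applications of the results already proved.
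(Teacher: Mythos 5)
Your proposal is correct and follows essentially the same route as the paper: reduce $\tr e^{-t\Delta_\sigma}$ via \eqref{eq:28}--\eqref{eq:29} to the trace of a convolution operator with amplitude $e^{t\lambda_\sigma}\tr\pi_\sigma(kk_1^{-1})$, invoke Theorem~\ref{thm:B} together with the expansion \eqref{eq:1.2.2012}, and collapse the $\Reg\mathcal{C}$-integral by the fibration identity \eqref{eq:4.2.2012}. The only cosmetic difference is that you contract the $V_\sigma$-indices of $h_t^\sigma$ at the outset, whereas the paper keeps the matrix entries $(\pi_\sigma(kk_1^{-1}))_{jk}$ through the asymptotic analysis and traces over $j=k$ at the very end.
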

\begin{proof}
By Theorem \ref{thm:B}, and \eqref{eq:1.2.2012}, we have
\begin{gather*}
\tr \pi_\Gamma( ^{jk} H_t^\sigma)  =\frac{ e^{t \lambda_\sigma}}{(2\pi)^{\dim G/ K}t^{\frac{\dim G/K}{2}}}\sum_\iota   \int_{\Reg \mathcal{C}} \hat \F_\iota(p,\xi)  \frac {(\pi_\sigma(k k_1^{-1}))_{jk}\alpha_\iota(p)  j_\iota(p)  d(\Reg \mathcal{C})(p,\xi,k_1,k)}{|\det   \, \Phi_{\iota \iota}''(p, \xi,k_1,k)_{N_{(p, \xi, k_1,k)}\mathrm{Reg}\, \mathcal{C}}|^{1/2}}
\end{gather*}
up to terms of order $ O(e^{t \lambda_\sigma}t^{-(\dim G/K-1)/2} (\log t)^{\Lambda -1})$. The assertion now follows with \eqref{eq:4.2.2012}.
\end{proof}


\providecommand{\bysame}{\leavevmode\hbox to3em{\hrulefill}\thinspace}
\providecommand{\MR}{\relax\ifhmode\unskip\space\fi MR }
\providecommand{\MRhref}[2]{%
  \href{http://www.ams.org/mathscinet-getitem?mr=#1}{#2}
}
\providecommand{\href}[2]{#2}


\end{document}